\documentclass{article}
%Language packages
\usepackage[activeacute,english]{babel} %Languages
\usepackage[utf8]{inputenc}
\usepackage{csquotes}
\usepackage{verbatim} %allows one to comment large chunks
%Aligned
\frenchspacing
%Image pakages
\usepackage{graphicx} %For inserting images
\usepackage[usenames,dvipsnames]{color} %For colors
\usepackage{enumerate,mdwlist}  %For list and personalization of lists
%For tables
\usepackage{multirow} %For the tables
%Math packages
\usepackage{amsmath,amsfonts,amssymb,amsthm} %AMSpackages
\usepackage{bm} %Bold math symbols
\usepackage{empheq} %For braces in systems of equations
\usepackage{bbm,dsfont} %Blackboard symbols
\usepackage{mathrsfs} % script letters
\numberwithin{equation}{section} %Numbering of equations
\usepackage{float} % for the position of the figures
\usepackage[labelfont=bf,labelsep=space]{caption} % for captions
\usepackage{empheq} %for systems with braces

%ForRussianCharacters
\usepackage[OT2,OT1]{fontenc}
\DeclareRobustCommand\cyr{%
  \renewcommand\rmdefault{wncyr}%
  \renewcommand\sfdefault{wncyss}%
  \renewcommand\encodingdefault{OT2}%
  \normalfont
  \selectfont}
\DeclareTextFontCommand{\textcyr}{\cyr}

%Figuras side by side
\usepackage{subcaption}

%Formato de la pagina
\usepackage{geometry}
 \geometry{
 a4paper,
 total={150mm,230mm},
 left=30mm,
 top=24mm,
 }

%Colores (unos pocos)
\usepackage{color}
\definecolor{red}{rgb}{.7,0,0}
\definecolor{blue}{rgb}{0,0,1}

% Letras

\def\mcB{\mathcal{B}}

\def\mcL{\mathcal{L}}

\def\mcS{\mathcal{S}}

\def\mcO{\mathcal{O}}

\def\mcU{\mathcal{U}}
\def\mcZ{\mathcal{Z}}

\def\bbR{\mathbb{R}}

\def\bbN{\mathbb{N}}
\def\bbI{\mathbb{I}}

\def\bbC{\mathbb{C}}
\def\bbP{\mathbb{P}}
\def\bbE{\mathbb{E}}
\def\bbS{\mathbb{S}}

\def\fkc{\mathfrak{c}}

\def\vol{\mathsf{vol}}

\def\fka{\mathfrak{a}}
\def\fkb{\mathfrak{b}}
\def\fkc{\mathfrak{c}}
\def\fkf{\mathfrak{f}}

\def\fkg{\mathfrak{g}}

\def\fkh{\mathfrak{h}}
\def\fkl{\mathfrak{l}}
\def\fkH{\mathfrak{H}}

\def\fkz{\mathfrak{z}}

%Abbreviations

\DeclareMathOperator{\conv}{conv}

\DeclareMathOperator{\rank}{rank}

\def\diff{\mathrm{D}}

\DeclareSymbolFont{extraitalic}      {U}{zavm}{m}{it}
\DeclareMathSymbol{\Qoppa}{\mathord}{extraitalic}{161}
\DeclareMathSymbol{\qoppa}{\mathord}{extraitalic}{162}

\DeclareMathOperator{\vertex}{V}
\def\var{\mathtt{v}}
\def\liftfunct{\textrm{\cyr l}} 

%Ends

%End of proof

%End of remark

%End of remark

%PDFLATeX
\usepackage[pdfencoding=auto,unicode]{hyperref} 
\hypersetup{
 pdftoolbar=true,
 pdfmenubar=true,
 pdfstartview={FitV},
 pdftitle={How `few' zeroes of a random fewnomial system are real?},
 pdfauthor={Alperen A. Ergür, Máté L. Telek, Josu\'{e} Tonelli-Cueto},
 pdfsubject={probability, real, algebraic geometry},
 pdfkeywords={real algebraic geometry,polynomial systems},
 pdfcreator={overleaf}, 
 pdfproducer={overleaf},
 linkcolor=red     %Color de los links internos del PDF.
 citecolor=green   %Color de las citas.
 urlcolor=cyan     %Color de los links externos.
 filecolor=magenta %Color de los links externos a archivos.
}

\usepackage{bookmark}

%Datos documento
\title{On the Number of Real Zeros\\of Random Sparse Polynomial Systems}
%\title{How `few' zeroes of a random fewnomial system are real?}
\author{
Alperen A. Ergür\\
\footnotesize{University of Texas at San Antonio}\\
\footnotesize{Dept. of Mathematics}\\
\footnotesize{San Antonio, Texas, USA}\\
\footnotesize{\tt  alperen.ergur@utsa.edu}
\and
Máté L. Telek\\
\footnotesize{Budapest University of Technology and Economics}\\
\footnotesize{Institute of Mathematics}\\
\footnotesize{Budapest, HUNGARY}\\
\footnotesize{\tt mtelek@math.bme.hu}
\and
Josu\'{e} Tonelli-Cueto\\
\footnotesize{CUNEF Universidad}\\
\footnotesize{Dept. of Quantitative Methods}\\
\footnotesize{Madrid, SPAIN}\\
\footnotesize{\tt  josue.tonelli.cueto@bizkaia.eu} 
}
\date{}

%Macros especiales

%\usepackage{amsthm}
\makeatletter
\def\th@plain{%
  \thm@notefont{}% same as heading font
  \slshape % body font
}
\def\th@definition{%
  \thm@notefont{}% same as heading font
  \normalfont % body font
}
\makeatother
\theoremstyle{plain}
\newtheorem{lem}{Lemma}[section]
\newtheorem{prop}[lem]{Proposition}
\newtheorem{theo}[lem]{Theorem}
\newtheorem{cor}[lem]{Corollary}

\theoremstyle{definition}
\newtheorem{defi}[lem]{Definition}
\theoremstyle{remark}

\newtheorem{remark}[lem]{Remark}

%Índice de palabras
%\usepackage{makeidx}
%\makeindex

%Columnas multiples
\usepackage{multicol}
\setlength{\columnsep}{2cm}

%Algoritmos
\usepackage[linesnumbered,ruled,algosection,vlined]{algorithm2e}
\SetKwInOut{postcondition}{Postcondition}
\SetKwInOut{precondition}{Precondition}
\SetKwInOut{correctness}{Successful if}

\SetCommentSty{mycommfont}
\let\oldnl\nl% Store \nl in \oldnl
\newcommand{\nonl}{\renewcommand{\nl}{\let\nl\oldnl}}% Remove line number for one line

%For nameref
\makeatletter
\let\original@algocf@latexcaption\algocf@latexcaption
\long\def\algocf@latexcaption#1[#2]{%
  \@ifundefined{NR@gettitle}{%
    \def\@currentlabelname{#2}%
  }{%
    \NR@gettitle{#2}%
  }%
  \original@algocf@latexcaption{#1}[{#2}]%
}
\makeatother

%Para enseñar los labels del documento, quitar después
%\usepackage[inline]{showlabels}

\usepackage[
backend=biber,
style=numeric,
sorting=nty,
maxbibnames=99
]{biblatex}
\addbibresource{biblio.bib}
\addbibresource{biblio_fewnomials.bib}
\addbibresource{biblio_random.bib}
\AtEveryBibitem{\clearfield{issn}}
\AtEveryBibitem{\clearfield{isbn}}
\AtEveryBibitem{\clearfield{url}}
\AtEveryBibitem{\ifentrytype{book}{\clearfield{pages}}{}}

%Aquí empieza el documento.
\begin{document}
\maketitle

\begin{abstract}
Consider a random system $\mathfrak{f}_1(x)=0,\ldots,\mathfrak{f}_n(x)=0$ of $n$ random real polynomials in $n$ variables, where each $\mathfrak{f}_k$ has a prescribed set of exponent vectors in a set $A_k\subseteq \mathbb{Z}^n$ of size $t_k$. Assuming that the coefficients of the $\mathfrak{f}_k$ are independent Gaussian of any variance, we prove that the expected number of zeros of the random system in the positive orthant is bounded from above by $4^{-n} \prod_{k=1}^n t_k(t_k-1)$. This result is a probabilistic version of Kushnirenko's conjecture; it provides a bound that only depends on the number of terms and is independent of their degree.
\end{abstract}

\noindent \textbf{Keywords}\\
real zeros, fewnomials, sparse polynomials, random real algebraic geometry\\[8pt]
\noindent \textbf{Mathematics Subject Classification (MSC) 2020}
\\14P05, 60D05
\section{Introduction}
Instances of polynomial system solving arise in a variety of problems coming from kinematics~\cite{sommesewampler}, dynamical systems \cite{fantuzzi-dynamicalsystems},  mathematical modeling of chemical reaction networks~\cite{dickenstein2020algebraic}, computer vision \cite{kukelova-minimalvision}, and computer aided geometric design \cite{cox2020applications,sederberg1986algebraic}. The common thread among these plethora of applications is that the polynomial systems that occur are structured, and the solutions that are most useful are the real ones. In this setting, the basic question is: how many real zeros are there?

When we consider zeros over complex numbers, we have the power of intersection theory~\cite{eisenbudharris2016}. In particular, for sparse polynomial systems the number of complex solutions is given by the celebrated BKK bound~\cite{bernshtein1975,kushnirenko1976french} (cf. \cite[Chapter 3]{sottile2011book}). The story becomes more complicated and interesting for real zeros. Consider the following simple polynomial system:
\[
\left.
\begin{array}{ll}
a_1+b_1X+\gamma_1Y+c_1XYZ^d=0\\
a_2+b_2X+\gamma_2Y+c_2XYZ^d=0\\
a_3+b_3X+\gamma_3Y+c_3XYZ^d=0.
\end{array}
\right.
\]
where $d\in\bbN$ and the $a_k$, $b_k$, $c_k$ and $d_k$ are real numbers. Although the system has, generically, $d$ complex zeros, it has at most two real solutions. This example illustrates that the number of real zeros cannot be estimated using the number of complex solutions.

The above phenomenon is a general one. Khovanski\u{\i}, in his seminal book \emph{Fewnomials}~\cite{khovanskii1991book}, showed that the maximum number of real zeros of a polynomial system can be bounded in terms of the description complexity of the system\footnote{Although it should be emphasized that Khovanski\u{\i}'s work goes well beyond the polynomial case up to the Pfaffian case.}. The term \emph{fewnomial} refers to a polynomial with few monomials, and so to a polynomial with a low description complexity. He showed that independent of the degree of its monomials, real fewnomial systems have few real zeros. 

The work of Khovanski\u{\i} was an answer to several conjectures by Kushnirenko in 1977~\cite{kushnirenkoletter}:
A regular positive zero of a polynomial system $f=(f_1,f_2,\ldots,f_n)$ is a point $x \in \mathbb{R}^n$ with positive coordinates ($x_i>0$) such that the Jacobian matrix of the polynomial system $f$ at $x$, $\diff_xf$, is full-rank. We denote the set of regular positive real zeros by  $\mcZ_r(f,\bbR^n_+)$. Among the conjectures of Kushnirenko, the following one remains widely open:
\begin{quote}
\emph{Kushnirenko's Question}: Let $A_1,\ldots,A_n\subset \bbN^n$ be finite sets of sizes $t_1,\ldots,t_n$ and $f$ the system of fewnomials given by
\begin{equation*}
\left.
    \begin{array}{rl}
        f_1&=\sum_{\alpha\in A_1}f_{1,\alpha}X^\alpha\\
        &\,\vdots\\
        f_n&=\sum_{\alpha\in A_n}f_{n,\alpha}X^\alpha\\
    \end{array}
\right.
\end{equation*}
Is there a bound of the form
\[
\#\mcZ_r(f,\bbR^n_+)\leq \mathrm{poly}(t_1,\ldots,t_n)^n?
\]
\end{quote}

It is fair to say that Kushnirenko's question is one of the most challenging problems in real algebraic geometry. However, its reach and importance goes far beyond. In complexity theory, variants of Kushnirenko's question (where the bound should be explicit in an algebraic complexity measure), such as the real $\tau$-conjecture~\cite{koiran2010} or the adelic $\tau$-conjecture~\cite{phillipsonrojas2014}, imply Valiant's algebraic variant of $\tt{P}$ vs $\tt{NP}$ (see~\cite{burgisser2000VPvsVNPbook} for details and relation to classical questions in complexity theory, and \cite{briquelburgisser2020} for a probabilistic take on the problem). 

Specific cases of Kushnirenko's question play an important role in studying chemical reaction networks. In that setting, the steady states of the network correspond to positive real zeros of a certain parameterized polynomial system~\cite{dickenstein2020algebraic}. Although there exist several methods to decide whether the system has at least two positive zeros~\cite{bihandickensteingiaroli2020,muellerfeliuregensburgerconradishiu2013,millendickensteinshiuconradi2011,feliuwiuf2013,feliusadehimanesh2022}, finding (tight) upper bounds is a hard and open problem. The polynomial system arising from a reaction network is usually sparse, and the maximum number of its positive zeros is much smaller compared to the number of complex zeros~\cite{grossharringtonrosensturmfels2015,obatakeshiutangtorres2019}. Thus techniques from real algebraic geometry should yield insight into the number of steady states~\cite{feliuhelmer2018,giarolirischtermillandickenstein2019,bihandickensteingiaroli2020}. 

Khovanski\u{\i} \cite[\S3.14, Corollary 4]{khovanskii1991book} obtained a bound of the form
\[
2^{\binom{t-1}{2}}(n+1)^{t-1}
\]
where $t=\#\left(\cup_{k=1}^n A_k\right)$ is the number of types of monomials appearing in the system. Later, Bihan and Sottile \cite{bihansottile2007,bihansottile2011} improved this bound to
\[
\frac{\mathrm{e}^2+3}{4}\, 2^{\binom{t-n-1}{2}} n^{t-n-1}
\]
and, in a special mixed case in which $t=\sum_{k=1}^nt_k-n+1$, further improved this bound to
\[
\frac{\mathrm{e}^2+3}{4}\, 2^{\binom{t-n-1}{2}} \binom{t-n-1}{t_1-2,\ldots,t_n-2}.
\]
After four decades of hard work, as of today, we are still far from answering Kushnirenko's question.  We only have precise bounds for special cases \cite{bihanelhilany2017,koiranportiertavenas2015,lirojaswang2003}, and even the bivariate case of Kushnirenko's question remains open~\cite{koiranportiertavenas2015}. We refer the reader to \cite{sottile2011book} for a comprehensive exposition, to~\cite{dickentein2020survey} for a short survey and to \cite[Appendix F, \S1]{tonellicuetothesis} for a historical survey. 

Our current inability to answer Kushnirenko's question, and the pressing need for an answer in applications, motivates a probabilistic take on the problem. How many zeros of a random real fewnomial system are real?  This question turns out the be harder than it sounds. There is a well-developed theory (either relying on the kinematic formula~\cite{azaiswschebor2009,edelemankostlan1995} or convex bodies~\cite{breidingburgisserlerariomathis2022,kazarnovskii2020,mathisstecconi2022}) in random real algebraic geometry. This theory has produced  strong results on the expected number of real zeros of random dense polynomials, that is, random polynomials supported on all monomials of a given degree \cite{kostlan1993,shubsmale1993,armentanoazaisdalmaoleon2018,letendre2019,letendrepuchol2019,armentanoazaisdalmaoleon2021,armentanoazaisdalmaoleon2022}. 
However, in the case of random structured polynomials, applying the theory becomes more complicated \cite{breidingfairchildsanrsieroshehu2022,kazarnovskii2022}, and earlier available results are either for random polynomials supported on a product of simplices \cite{rojas1995} or depend strongly on the degree of the monomials \cite{malajovichrojas2002}\footnote{Recently, Malajovich~\cite{malajovich2013,malajovich2023} has provided a bound that bridges between the dense and the sparse cases. In a different direction, there are results on complex zeros of random sparse complex polynomials ~\cite{bayraktarkicisel2023,shiffmanzeldtich2004,shiffmanzeldtich2010}.}. In summary, the aforementioned results in random real algebraic geometry do not address Kushnirenko's question. In~\cite{BETC-fewnomials}, we obtained a bound for random fewnomial systems with fixed support ($A=A_1=\cdots=A_n$) and centered Gaussian random coefficients with the same variance structure. This was relevant to Kushnirenko's question because the bound depended only on the number of distinct exponent vectors and was independent of the underlying variances (within the probabilistic model we restrict to). We emphasize the variance independence because almost all prior results placed strong assumptions on the variance structure.% that are called ``KSS model''. 

Recently, Bürgisser~\cite{burgisser2023} generalized the results of~\cite{BETC-fewnomials} to the mixed case (where $A_1,A_2,\ldots,A_n$ are not necessarily equal). However, the result in \cite{burgisser2023} only holds for centered Gaussian fewnomial systems in which all variances are equal to one. In this paper, we obtain estimates that hold for arbitrary support sets and arbitrary variances in the same spirit with Kusnirenko's question.

The simplest form of our result can be seen in the theorem below. In the remainder of the introduction we explain the consequences of our result in specific cases and state our main theorem in full technical detail. We also provide side results on the volume of random fewnomial varieties.  
\begin{theo} \label{theo:MainTheoremNumberZeros}
Let $A_1,\ldots,A_n\subset \bbR^n$ be finite sets of sizes $t_1,\ldots,t_n$, and $\fkf$ the system of $n$ random fewnomials given by
\begin{equation}
\left.
    \begin{array}{rl}
        \fkf_1&=\sum_{\alpha\in A_1}\fkf_{1,\alpha}X^\alpha\\
        &\,\vdots\\
        \fkf_n&=\sum_{\alpha\in A_n}\fkf_{n,\alpha}X^\alpha\\
    \end{array}
\right.
\end{equation}
where the $\fkf_{k,\alpha}$ are independent centered Gaussian random variables (with no restrictions placed on variances). Then we have
\begin{equation}
    \bbE_{\fkf} \#\mcZ(\fkf,\bbR^n_+)\leq \frac{1}{4^n} \prod_{k=1}^n t_k(t_k-1)
\end{equation}
where $\mcZ(\fkf,\bbR^n_+)$ is the zero set of $\fkf$ in $\bbR_+^n$.
\end{theo}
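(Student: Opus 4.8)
The plan is to pass to exponential coordinates, express the expected count as an integral-geometric (Kac--Rice) integral, and then bound that integral by a probabilistic form of Descartes' rule; the product shape $\prod_k\tfrac14 t_k(t_k-1)$ should come out because the argument is run one equation at a time, with $\binom{t_k}{2}$ counting pairs of exponents of $\fkf_k$ and $\tfrac12$ being the probability that a centered Gaussian pair has opposite signs. Concretely, I would first substitute $x=e^u$ with $u\in\bbR^n$, turning each $\fkf_k$ into the exponential sum $g_k(u)=\sum_{\alpha\in A_k}\fkf_{k,\alpha}e^{\langle\alpha,u\rangle}$, so that $\#\mcZ(\fkf,\bbR^n_+)=\#\mcZ(g,\bbR^n)$ with $g=(g_1,\dots,g_n)$. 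Absorbing the standard deviations $\sqrt{\mathrm{Var}\,\fkf_{k,\alpha}}$ into the monomials --- which changes neither the zero set nor the independence of the coefficients --- the zero set of $g_k$ is $\Phi_k^{-1}(H_k)$, where $\Phi_k\colon\bbR^n\to\bbR^{t_k}$ is the moment map $u\mapsto\bigl(\sqrt{\mathrm{Var}\,\fkf_{k,\alpha}}\,e^{\langle\alpha,u\rangle}\bigr)_{\alpha\in A_k}$ and the $H_k\subset\bbR^{t_k}$ are independent, uniformly distributed linear hyperplanes. It is this reduction that makes the final bound insensitive to the variances.

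Next I would set up the integral-geometric formula. Applying the Kac--Rice formula to the Gaussian map $g$ --- equivalently, iterating the spherical Cauchy--Crofton formula while peeling off $H_n,H_{n-1},\dots$ one at a time (at the first step $\{g_1=\dots=g_{n-1}=0\}$ is a.s.\ a curve $C$ and $\bbE_{H_n}\#\mcZ(g,\bbR^n)=\tfrac1\pi\,\ell_{\bbS^{t_n-1}}\!\bigl(\text{radial projection of }\Phi_n(C)\bigr)$, then recurse) --- one arrives at a formula
\begin{equation*}
\bbE_\fkf\#\mcZ(\fkf,\bbR^n_+)=\int_{\bbR^n}\rho(u)\,\mathrm{d}u,
\end{equation*}
where, after using the independence of $g_1,\dots,g_n$ (and, if one goes through Kac--Rice, Hadamard's inequality on $\det\diff g$), one has $\rho(u)\le\prod_{k=1}^n\rho_k(u)$ with each $\rho_k$ a nonnegative ``length density'' built from $\Phi_k$ alone, essentially the spherical speed of the radial projection of $\Phi_k$ at $u$.

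The crux is then to show $\int_{\bbR^n}\prod_k\rho_k(u)\,\mathrm du\le\prod_{k}\tfrac14 t_k(t_k-1)$. The one-equation case is the probabilistic Descartes estimate: restricting $g_k$ to any affine line $u=u_0+sv$ produces the univariate exponential sum $\sum_{\alpha\in A_k}\bigl(\fkf_{k,\alpha}e^{\langle\alpha,u_0\rangle}\bigr)e^{s\langle\alpha,v\rangle}$, whose coefficients are again independent centered Gaussians, and by the generalized Descartes rule for exponential sums its number of real zeros is at most the number of sign changes among these $t_k$ coefficients, hence at most the number of sign-discordant unordered pairs of exponents; taking expectations and using $\bbP(\sgn\fkf_{k,\alpha}\ne\sgn\fkf_{k,\beta})=\tfrac12$ gives $\le\tfrac12\binom{t_k}{2}=\tfrac14 t_k(t_k-1)$. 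For $n\ge2$ this must be upgraded to a multilinear statement. The natural route is to read $\int_{\bbR^n}\prod_k\rho_k$ back as (a bound on) the expected number of $u$ with $\Phi_k(u)\in H_k$ for all $k$, peel the hyperplanes off one by one via a coarea/Fubini computation on the intermediate varieties, and at each peeling control the resulting ``partial length'' by the total-curvature behaviour of the generalized moment curves cut out by $\Phi_k$ --- curvature playing, along a curve, the role that the derivative of an exponential sum plays on a line. The $\pi$'s from the Crofton constants cancel those hidden in the $\rho_k$, and the leftover integral over $\bbR^n$ collapses to $1$ by the normalization of the Gaussian densities.

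The main obstacle is exactly this coupling of the equations: after peeling $H_1,\dots,H_{n-1}$ one is left not with a line but with a complicated random curve $C\subset\{g_1=\dots=g_{n-1}=0\}$, so the clean univariate Descartes bound does not apply to $g_n|_C$ verbatim. Closing the recursion needs a genuine decoupling --- a Hadamard / Cauchy--Binet-type factorization of the relevant Jacobian together with a curvature estimate for the moment curves that is uniform in the exponents $\alpha\in A_k$, so that the final answer depends only on the cardinalities $t_k$, never on the degrees, and not at all on the variances. Keeping the absolute constant equal to precisely $\tfrac14$, rather than something larger, is the subtlest point, and is where the $\tfrac12$ from Gaussian sign symmetry and the $\tfrac1\pi$ from Crofton have to be combined without loss.
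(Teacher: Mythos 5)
Your opening moves match the paper's: the change of variables $x=e^u$, the absorption of the standard deviations into the exponents (the paper encodes them as lifting functions $\liftfunct_k(\alpha)=\tfrac12\ln\var_{k,\alpha}$), a Kac--Rice/kinematic integral formula for the expected count, and a Cauchy--Binet expansion of the Jacobian determinant to decouple the equations. But you have correctly identified, and then left open, the central difficulty: how to close the argument for $n\ge 2$, where the univariate Descartes count no longer applies along the random curve cut out by the other equations. The route you sketch for that step --- peeling off the hyperplanes one at a time and controlling ``partial lengths'' by total-curvature estimates for moment curves, uniformly in the exponents --- is neither carried out nor what makes the proof work.

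The missing idea is polyhedral, not curvature-theoretic. The paper partitions $\bbR^n$ into the cells $M(\alpha_1,\ldots,\alpha_n)$ of the regular mixed subdivision induced by the liftings: $x$ lies in such a cell when, for each $k$, the lifted monomial indexed by $\alpha_k$ dominates all others of $\fkf_k$. There are at most $\prod_k t_k$ such cells. On each cell one normalizes so that the dominant monomial is the constant $1$, applies Cauchy--Binet (giving at most $\prod_k(t_k-1)$ terms), and then --- this is the key inequality \eqref{eq:desiredineq} --- shows that each term's density is pointwise dominated, \emph{precisely because all remaining exponentials are at most $1$ on the cell}, by the Kac--Rice density of the binomial system $\fka_k\exp(\gamma_k^Tx+s_k)+\fkb_k$ restricted to the cone where $\gamma_k^Tx+s_k\le 0$. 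That system has a zero in that cone iff for every $k$ the pair $\fka_k,\fkb_k$ has opposite signs \emph{and} $|\fkb_k|\le|\fka_k|$, an event of probability exactly $4^{-n}$; this is where the constant $\tfrac14$ per equation actually comes from (ordered pairs (dominant, other), i.e.\ $t_k(t_k-1)$ of them, each weighted by $\tfrac14$ --- rather than your unordered pairs weighted by $\tfrac12$; the two bookkeepings give the same product, but only the former is backed by an inequality). Without the cell decomposition the domination \eqref{eq:desiredineq} fails globally and nothing in your plan replaces it, so as written the proposal is an outline with the decisive step missing.
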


\begin{remark}
Observe that if the affine span of $\bigcup_{k=1}^nA_k$ is not all of $\bbR^n$ (or, more generally, if $A_1,\ldots,A_n$ does not contain an independent transversal (see \cite[Lemma 1]{yu2016} for the precise definition)), then $\mcZ(\fkf,\bbR^n_+)=\varnothing$ with probability one. For instance, if the mixed volume of the convex hulls of the $A_i$ is zero, then $\mcZ(\fkf,\bbR^n_+)=\varnothing$ with probability one.
\end{remark}

\subsection{Random Mixed Fewnomials with Restricted Variances}

The following theorem shows that the bound of Theorem~\ref{theo:MainTheoremNumberZeros} can be improved when we impose restrictions on the variances on the random fewnomial system. This result improves the main theorem of B\"urgisser~\cite[Theorem~1.1]{burgisser2023} by allowing more flexible assumptions on the variance structure and a better multiplying factor: $4^{-n}$ instead of $(2\pi)^{-n/2}$. We give the proof in Section~\ref{sec:proofMainResult}.

\begin{theo}\label{theo:MainTheoremNumberZeros_variances}
Under the same notations and assumptions of Theorem~\ref{theo:MainTheoremNumberZeros}, assume instead that the variance vectors 
\[
\var_k=(\var_{k,\alpha})_{\alpha\in A_k}
\]
of the $\fkf_k$ satisfy the following conditions:
\begin{enumerate}
    \item[(VM1)] for all $k$ and all $\alpha\in A_k$, $\var_{k,\alpha} \leq 1$.
    \item[(VM2)] for all $k$ and $\alpha\in A_k$ a vertex of the polytope $P_k:=\conv(A_k)$, $\var_{k,\alpha} = 1$.
\end{enumerate} 
Then
\begin{equation}
\bbE_{\fkf}\#\mcZ_r(\fkf,\bbR^n_+)\leq \frac{1}{4^n}\vertex\left(\sum_{k=1}^nP_k\right)\prod_{k=1}^n(t_k-1)
\end{equation}
where $\vertex\left(\sum_{k=1}^nP_k\right)$ is the number of vertices of the Minkowski sum  $\sum_{k=1}^nP_k$. 
\end{theo}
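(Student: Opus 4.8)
The plan is to follow the same strategy used for Theorem~\ref{theo:MainTheoremNumberZeros}, but to track the geometric refinement that appears when the variances are normalized on vertices. The starting point is an integral-geometric (Kac--Rice type) formula for $\bbE_\fkf\#\mcZ_r(\fkf,\bbR^n_+)$, which expresses the expected number of regular positive zeros as an integral over $\bbR^n_+$ of a density built from the covariance structure of $\fkf$ and its Jacobian. Working multiplicatively in the $n$ independent equations, this density factors through quantities attached to each $\fkf_k$ separately together with a single determinantal coupling term. The key point is that after the standard change of variables $X^\alpha \mapsto$ (monomial map to the log chart), the relevant geometry is governed by the Newton polytopes $P_k = \conv(A_k)$ and, crucially, by their Minkowski sum $\sum_k P_k$: the coupling/Jacobian term, once integrated out, produces the mixed-volume-like contribution which under the vertex-normalization hypothesis collapses to the count $\vertex(\sum_k P_k)$ rather than $\prod_k t_k$.

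First I would set up the Kac--Rice representation and reduce, via homogeneity and the torus action, to an integral over a bounded region; this is where (VM1) enters, bounding each single-equation factor by its value in the unit-variance case so that no variance exceeds the ``all ones'' density pointwise. Second, I would isolate the factor coming from the linear algebra of the Jacobian determinant and apply the integral-geometric identity (the same one used for the $4^{-n}\prod t_k(t_k-1)$ bound) that converts $\bbE|\det \diff_x\fkf|$ against the Gaussian density into a sum over $n$-subsets of the supports, each weighted by the absolute value of a determinant of exponent differences. Third — and this is the heart of the argument — I would invoke (VM2): because the variances are exactly $1$ on the vertices of each $P_k$, the dominant terms in that sum are precisely those indexed by vertex tuples, and the sum over all sign patterns of the exponent-difference determinants telescopes (via the standard ``signed decomposition of a zonotope/Minkowski sum'' or Cauchy--Binet type identity) to $\vertex(\sum_k P_k)$. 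Each equation then contributes one further factor of $(t_k-1)$ from the remaining ``affine'' degree of freedom (the choice of a base monomial, as in the passage from $t_k(t_k-1)$ to $(t_k-1)$), and the $4^{-n}$ is the same universal constant $(1/4)^n$ extracted from the one-dimensional Gaussian integral estimate in each coordinate.

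The main obstacle I expect is Step three: making precise the claim that the vertex-normalization exactly matches the Minkowski-sum vertex count. One must show that non-vertex monomials, which under (VM1) carry variance $\le 1$, contribute terms that are either dominated by or cancel against the vertex terms — i.e. that replacing $\var_{k,\alpha}$ by its upper bound $1$ for interior/edge points and by its exact value $1$ for vertices, the resulting majorant integral evaluates to $\vertex(\sum_k P_k)\prod_k(t_k-1)\,/\,4^n$ and not merely to something bounded by it. Concretely this requires identifying the integral-geometric kernel with (a multiple of) the mixed volume / number-of-vertices functional of the Minkowski sum and checking that the vertex configuration is the extremal one for the normalized-variance family. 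I would handle this by expressing the bound as a supremum over admissible variance vectors of a linear-in-each-$\var_{k,\alpha}$ functional, so that the supremum is attained at an extreme point of the feasible region — which by (VM1)--(VM2) forces $\var_{k,\alpha}\in\{0,1\}$ with $1$ on vertices — and then recognizing the resulting combinatorial sum as $\vertex(\sum_k P_k)$. The remaining steps (Kac--Rice setup, the $(t_k-1)$ counting, the $4^{-n}$ constant) are routine given the machinery already developed for Theorem~\ref{theo:MainTheoremNumberZeros}.
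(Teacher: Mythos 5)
There is a genuine gap, and it sits exactly where you flagged the difficulty: the mechanism producing the factor $\vertex\bigl(\sum_{k=1}^nP_k\bigr)$. In the paper this factor does not come from within the Cauchy--Binet expansion at all, and there is no cancellation or ``telescoping'' between vertex and non-vertex terms. The actual argument (Theorem~\ref{theo:maintheorem_fulldetails}) encodes the variances as lifting functions $\liftfunct_k:\alpha\mapsto\tfrac12\ln\var_{k,\alpha}$, passes to exponential sums, and partitions $\bbR^n$ into polyhedral cells
\[
M(\alpha_1,\ldots,\alpha_n)=\bigl\{x\in\bbR^n\mid \alpha^Tx+\liftfunct_k(\alpha)\le \alpha_k^Tx+\liftfunct_k(\alpha_k)\ \text{for all }k,\ \alpha\in A_k\bigr\},
\]
which are full-dimensional exactly when $\sum_k\bigl(\liftfunct_k(\alpha_k),\alpha_k\bigr)$ is a vertex of the Minkowski sum $\sum_k\mcL(A_k,\liftfunct_k)$; the vertex count is the number of cells, and on \emph{each} cell one keeps all $\prod_k(t_k-1)$ Cauchy--Binet summands and bounds every one of them by $4^{-n}$ via the binomial-system estimate (Proposition~\ref{prop:boundbinomialsystem}), using crucially that $\beta_k^Tx+\liftfunct_k(\beta_k)\le 0$ holds on the cell. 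Theorem~\ref{theo:MainTheoremNumberZeros_variances} then follows because (VM1)--(VM2) force $\liftfunct_k\le 0$ with equality on the vertices of $P_k$, so $\mcL(A_k,\liftfunct_k)=(-\infty,0]\times P_k$ and $\vertex\bigl(\sum_k\mcL(A_k,\liftfunct_k)\bigr)=\vertex\bigl(\sum_kP_k\bigr)$. Your proposal never introduces this variance-induced subdivision of the domain, and without a partition whose number of pieces is the vertex count there is no route from the Kac--Rice/Cauchy--Binet machinery of Theorem~\ref{theo:MainTheoremNumberZeros} to a bound proportional to $\vertex\bigl(\sum_kP_k\bigr)$: the determinant expansion by itself has $\prod_k(t_k-1)$ terms (after fixing base monomials), not $\vertex\bigl(\sum_kP_k\bigr)$ of them, and no signed zonotope identity converts one count into the other.

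The fallback you sketch --- writing the bound as a supremum over admissible variance vectors of a functional ``linear in each $\var_{k,\alpha}$'' and evaluating at extreme points $\var_{k,\alpha}\in\{0,1\}$ --- also does not go through. The expected number of zeros is not linear (nor monotone in any evident sense) in the variances: they enter nonlinearly through the normalizations $\|\varphi_k(x)\|$ and the conditional Gaussian law of the Jacobian rows, which is precisely why the paper has to route the dependence on variances through the combinatorics of the regular subdivision rather than through a pointwise comparison with the unit-variance case; moreover, an extreme point with $\var_{k,\alpha}=0$ changes the support set $A_k$ and hence the polytope $P_k$, so the extremal configuration would not even yield the claimed bound. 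Your Steps 1--2 and the accounting of the constants ($4^{-n}$ from the one-dimensional integral, $(t_k-1)$ per equation) do match the paper's machinery, but the missing idea --- the lifting by $\tfrac12\ln\var_{k,\alpha}$ and the cell decomposition indexed by vertices of the lifted Minkowski sum --- is the entire content of the refinement you are asked to prove, so as written the proposal does not establish the theorem.
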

\begin{remark}
Interestingly, in the univariate case ($n=1$), the obtained upper bound, $(t_1-1)/2$, agrees with the average value of the bound given by Descartes' rule of signs~\cite{descartesgeometry}: it is the average number of sign variations assuming coefficients are positive and negative with probability one half. However, optimal results in the univariate case, when all variances are equal to one, were already obtained in~\cite{jindalpandeyshuklazisopoulos2020}.
\end{remark}
 
\subsection{Random Unmixed Fewnomials with Restricted Variances}

In the case where the system is unmixed and  some restrictions are imposed on the variances we obtain better bounds. This result improves the main theorem of \cite[Theorem~1.2]{BETC-fewnomials} (see Remark \ref{rem:unmixedimproved} for details). We give the proof of this in Section~\ref{sec:proofMainResultUnmixed}.

\begin{theo}\label{theo:MainTheoremNumberZeros_variancesunmixed}
Under the same notations and assumptions of Theorem~\ref{theo:MainTheoremNumberZeros}, assume that
\[
A=A_1=\cdots=A_n
\]
for some $A\subset\bbR^n$ of size $t$, and assume that the variance vectors $\var_k$ of the $\fkf_k$ either satisfy both the assumptions (VM1) and (VM2) in Theorem \ref{theo:MainTheoremNumberZeros_variances} or they satisfy
\begin{equation}\label{eq:varindpol}
    \var_1=\cdots=\var_n.
\end{equation}
Then 
\begin{equation}
    \bbE_{\fkf}\#\mcZ(\fkf,\bbR^n_+)\leq \frac{n+1}{4^n}\binom{t}{n+1}.
\end{equation}
\end{theo}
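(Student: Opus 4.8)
The plan is to reduce $\bbE_{\fkf}\#\mcZ(\fkf,\bbR^n_+)$ to the $n$-dimensional volume of the image of the positive orthant under the monomial map to a sphere, and then to estimate that volume by a Cauchy--Binet expansion indexed by the $(n+1)$-element subsets of $A$. Throughout we may assume that $A$ affinely spans $\bbR^n$, since otherwise $\mcZ(\fkf,\bbR^n_+)=\varnothing$ almost surely by the Remark following Theorem~\ref{theo:MainTheoremNumberZeros}. Write each coefficient as $\fkf_{k,\alpha}=\sqrt{\var_{k,\alpha}}\,g_{k,\alpha}$ with $g_{k,\alpha}$ standard Gaussian, and set $v(x):=\big(\sqrt{\var_{1,\alpha}}\,x^{\alpha}\big)_{\alpha\in A}$ and $\phi(x):=v(x)/\|v(x)\|\in\bbS^{t-1}$; because $A$ affinely spans $\bbR^n$, the map $\phi$ embeds $\bbR^n_+$ onto an $n$-dimensional submanifold $M\subseteq\bbS^{t-1}$ contained in the open positive orthant of the sphere.

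First I would carry out the reduction to $\vol_n(M)$. Under hypothesis~\eqref{eq:varindpol} the $\fkf_k$ have a common variance vector, so $g_1,\dots,g_n\in\bbR^{A}$ are i.i.d.\ isotropic Gaussians and $L:=\operatorname{span}(g_1,\dots,g_n)$ is a uniformly distributed $n$-plane; a point $x\in\bbR^n_+$ is a common zero of $\fkf$ exactly when $v(x)\perp L$, i.e.\ when $\phi(x)\in L^{\perp}\cap\bbS^{t-1}$, so $\#\mcZ(\fkf,\bbR^n_+)=\#\big(M\cap(L^{\perp}\cap\bbS^{t-1})\big)$ and the spherical Crofton formula yields $\bbE_{\fkf}\#\mcZ(\fkf,\bbR^n_+)=2\,\vol_n(M)/\vol_n(\bbS^{n})$ (the factor $2$ coming from the central symmetry of great subspheres). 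Under hypotheses (VM1)--(VM2) the vectors $g_k$ are anisotropic and Crofton does not apply directly; instead one feeds the unmixed system into the Kac--Rice / integral-geometric inequality already used to establish Theorems~\ref{theo:MainTheoremNumberZeros} and~\ref{theo:MainTheoremNumberZeros_variances}, using (VM1) to discard the non-vertex contributions and (VM2) to keep the vertex ones normalized, which again bounds the left-hand side by $2\,\vol_n(M)/\vol_n(\bbS^{n})$ computed with every variance replaced by $1$. It therefore suffices to bound $\vol_n(M)$.

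Next I would identify the metric and expand its determinant. In logarithmic coordinates $u=\log x\in\bbR^n$ one checks that $M$ pulls the round metric back to the Gram matrix $\Sigma(u)=\sum_{\alpha\in A}p_\alpha(u)\,(\alpha-\bar\alpha(u))(\alpha-\bar\alpha(u))^{\top}$, where $p_\alpha(u)=\var_\alpha e^{2\langle\alpha,u\rangle}\big/\sum_{\beta\in A}\var_\beta e^{2\langle\beta,u\rangle}$ and $\bar\alpha(u)=\sum_{\alpha}p_\alpha(u)\,\alpha$; hence $\vol_n(M)=\int_{\bbR^n}\sqrt{\det\Sigma(u)}\,du$. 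Cauchy--Binet writes $\det\Sigma(u)$ as a sum over $n$-element subsets $S\subseteq A$ of $\big(\prod_{\alpha\in S}p_\alpha\big)(n!)^{2}\,\vol\big(\conv(\{\bar\alpha\}\cup S)\big)^{2}$; splitting $\bar\alpha$ according to its mass inside and outside $S$ and using the convexity of the simplex volume in a single vertex (followed by Jensen's inequality) reorganizes this into a sum over the $(n+1)$-element subsets $T\subseteq A$: $\det\Sigma(u)\le (n+1)(n!)^{2}\sum_{|T|=n+1}\big(\prod_{\beta\in T}p_\beta(u)\big)\vol(\conv T)^{2}$, the affinely dependent $T$ contributing nothing.

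Finally I would handle the per-subset integral. Using subadditivity of $\sqrt{\cdot\,}$ to take the sum outside the integral gives $\vol_n(M)\le\sqrt{n+1}\;n!\sum_{|T|=n+1}\vol(\conv T)\int_{\bbR^n}\sqrt{\prod_{\beta\in T}p_\beta(u)}\,du$. For a fixed affinely independent $T=\{\beta_0,\dots,\beta_n\}$, bounding the denominator $\sum_{\beta\in A}$ below by $\sum_{\beta\in T}$ and then substituting $v_i=\langle\beta_i-\beta_0,u\rangle$ (with Jacobian $n!\,\vol(\conv T)$), $s_i=e^{v_i}$ and $r_i=\sqrt{\var_{\beta_i}}\,s_i$, collapses the integral to $\frac{\sqrt{\var_{\beta_0}}}{n!\,\vol(\conv T)}\int_{(0,\infty)^n}\big(\var_{\beta_0}+\|r\|^{2}\big)^{-(n+1)/2}dr$, which evaluates — independently of all the variances — to $\vol_n(\bbS^{n})\big/\big(2^{n+1}n!\,\vol(\conv T)\big)$. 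The factors $\vol(\conv T)$ cancel, so summing over the $\binom{t}{n+1}$ subsets bounds $\vol_n(M)$ by a constant multiple of $\binom{t}{n+1}\vol_n(\bbS^{n})$, and optimizing the constants gathered along the way is exactly what produces the stated factor $\tfrac{n+1}{4^{n}}$. I expect the main obstacle to be precisely this constant bookkeeping — squeezing the barycenter term in the Cauchy--Binet step tightly enough to reach the power $4^{-n}$ rather than a weaker power of $2$ — together with making the (VM1)--(VM2) case rigorous, where isotropy fails and one must route the argument through the Kac--Rice estimate behind Theorems~\ref{theo:MainTheoremNumberZeros}--\ref{theo:MainTheoremNumberZeros_variances} instead of invoking Crofton directly.
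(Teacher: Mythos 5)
Your route is genuinely different from the paper's: you normalize the monomial map into $\bbS^{t-1}$, invoke the spherical Crofton/kinematic formula to replace the expected zero count by $2\vol_n(M)/\vol_n(\bbS^n)$, and then bound the volume via the covariance-matrix determinant (this is essentially the strategy of \cite{BETC-fewnomials}), whereas the paper never leaves $\bbR^n$: it localizes the Rice formula of Section~\ref{sec:riceformula} to the full-dimensional cells of the normal fan of $\sum_k\mcL(A,\liftfunct_k)$, applies Cauchy--Binet inside each cell, and bounds every summand by the binomial-system estimate $4^{-n}$ of Proposition~\ref{prop:boundbinomialsystem}; Lemma~\ref{lem:UVVVimpliesMV} is what turns (VM1)--(VM2) or $\var_1=\cdots=\var_n$ into the structural hypothesis (MV) that makes only the diagonal cells count. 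Your metric computation and your per-subset integral evaluation are correct, and your Cauchy--Binet step can even be made exact: $\det\Sigma(u)=\sum_{|T|=n+1}\bigl(\prod_{\beta\in T}p_\beta(u)\bigr)\bigl(n!\,\vol(\conv T)\bigr)^2$, which removes your spurious factor $(n+1)$ and the Jensen step. But the decisive final claim --- that the constants ``optimize'' to $\tfrac{n+1}{4^n}$ --- fails, and not because of loose bookkeeping: with the exact identity, each affinely independent $(n+1)$-subset $T$ contributes at most $\tfrac{2}{\vol_n(\bbS^n)}\cdot n!\,\vol(\conv T)\cdot\tfrac{\vol_n(\bbS^n)}{2^{n+1}n!\,\vol(\conv T)}=2^{-n}$, so your method tops out at $\tfrac{1}{2^n}\binom{t}{n+1}$. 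This $2^{-n}$ per subset is not slack of the estimate: for $t=n+1$ (a single affinely independent simplex support with i.i.d.\ coefficients, which satisfies all hypotheses of the theorem) the expected number of positive zeros is exactly $2^{-n}$, since after the monomial substitution the problem becomes a uniformly random line in $\bbR^{n+1}$ meeting the open positive orthant. Hence no refinement of a per-subset bound that integrates over all of $\bbR^n$ can produce $4^{-n}$; the paper only gets $4^{-n}$ per term by restricting each integral to a cell where the constant term dominates, paying instead a factor counting the cells. (The same simplex example gives $2^{-n}>\tfrac{n+1}{4^n}$ for $n\ge 2$, so the obstruction you flagged is not specific to your approach: the unmixed Cauchy--Binet count must also account for the $n!$ assignments of the chosen exponents to the $n$ equations, and the cell-decomposition argument then yields a bound of the shape $\tfrac{(n+1)!}{4^n}\binom{t}{n+1}$ rather than the stated one.)

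A second genuine gap is the (VM1)--(VM2) case. When the variance vectors differ across $k$, the Gaussian rows have different (anisotropic) covariances, the orthogonal $n$-plane is no longer uniformly distributed, and Crofton does not apply; your proposal to ``replace every variance by $1$'' and still upper-bound the expectation by the isotropic volume is precisely a monotonicity-in-the-variances assertion that is nowhere justified --- indeed avoiding such assumptions is the point of the paper. The paper handles this case combinatorially: under (VM1)--(VM2) one has $\mcL(A,\liftfunct_k)=(-\infty,0]\times\conv(A)$ for every $k$, so (MV) holds and the cell argument of Theorem~\ref{theo:maintheorem_unmixedfulldetails} applies verbatim; any repair of your proof would need an analogous argument rather than the asserted reduction.
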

\begin{remark}\label{rem:unmixedimproved}
Theorem \ref{theo:MainTheoremNumberZeros_variancesunmixed} improves \cite[Theorem~1.2]{BETC-fewnomials} in the following sense: The main result of \cite[Theorem~1.2]{BETC-fewnomials} only holds under the assumption \eqref{eq:varindpol} on the variance vectors, whereas Theorem \ref{theo:MainTheoremNumberZeros_variancesunmixed} has more flexible assumptions. Moreover, the upper bound in \cite[Theorem~1.2]{BETC-fewnomials} is of the form $\frac{1}{2^{n-1}} \binom{t}{n}$. Therefore, Theorem \ref{theo:MainTheoremNumberZeros_variancesunmixed} gives a smaller bound if and only if  $t-n \leq 2^{2n-n+1}$.     

%Therefore, Theorem \ref{theo:MainTheoremNumberZeros_variancesunmixed} gives a stronger bound only if  $t-n \leq \frac{2^{n+1}}{n+1}$.     
\end{remark}

A specific case that attracted considerable attention is when $\# A=n+\ell$ and $\ell$ is a fixed constant~\cite{bihansottile2007}, specially when $\ell=2$ \cite{bihan2007,bihandisckenstein2017,rojas2022counting}. The following corollary (whose proof is at the end of Section~\ref{sec:proofMainResultUnmixed}) shows that if such systems are generated randomly then it is hard to find any positive zero at all. 

\begin{cor}\label{cor:circuitcase}
Under the same the same notations and assumption of Theorem~\ref{theo:MainTheoremNumberZeros_variancesunmixed}, assume that for some constant $\ell\in\bbN$,
\[
\#A=n+\ell
\]
Then
\begin{equation}
\bbP_{\fkf}\left(\mcZ(\fkf,\bbR^n_+)\neq \varnothing \right)\leq \frac{\ell(n+1)^\ell}{4^n}.  
\end{equation}
Moreover, since $\ell$ is a fixed constant, we have that $\lim_{n\to \infty} \bbP_{\fkf}\left(\mcZ(\fkf,\bbR^n_+)\neq \varnothing \right)=0$. 
\end{cor}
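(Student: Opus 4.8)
The plan is to derive the corollary directly from Theorem~\ref{theo:MainTheoremNumberZeros_variancesunmixed} by combining a Markov-type inequality with the binomial estimate coming from $\#A = n+\ell$. First I would observe that, since $\#\mcZ(\fkf,\bbR^n_+)$ is a nonnegative integer-valued random variable, Markov's inequality gives
\[
\bbP_{\fkf}\left(\mcZ(\fkf,\bbR^n_+)\neq\varnothing\right)
= \bbP_{\fkf}\left(\#\mcZ(\fkf,\bbR^n_+)\geq 1\right)
\leq \bbE_{\fkf}\#\mcZ(\fkf,\bbR^n_+).
\]
By Theorem~\ref{theo:MainTheoremNumberZeros_variancesunmixed} (noting that the hypotheses of the corollary already include those of that theorem, so at least one of the two variance conditions holds), the right-hand side is bounded by $\frac{n+1}{4^n}\binom{t}{n+1}$ with $t = n+\ell$.

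The remaining step is purely combinatorial: bound $\binom{n+\ell}{n+1} = \binom{n+\ell}{\ell-1}$. I would use the crude estimate $\binom{n+\ell}{\ell-1}\leq \frac{(n+\ell)^{\ell-1}}{(\ell-1)!}\leq (n+\ell)^{\ell-1}$, or more simply $\binom{n+\ell}{\ell-1}\leq (n+1)^{\ell-1}$ after a short manipulation, so that
\[
\bbP_{\fkf}\left(\mcZ(\fkf,\bbR^n_+)\neq\varnothing\right)
\leq \frac{n+1}{4^n}\binom{n+\ell}{\ell-1}
\leq \frac{(n+1)^{\ell}}{4^n}\cdot\frac{1}{(\ell-1)!}\cdot(\ell-1)!^{\,?}
\]
— in any case one reaches a bound of the form $\frac{\ell(n+1)^\ell}{4^n}$ (the factor $\ell$ absorbs the constants from the binomial coefficient; one checks $\binom{n+\ell}{\ell-1}\leq \ell\,(n+1)^{\ell-1}$ for all $n\geq 0$ and $\ell\geq 1$ by induction on $\ell$ or directly). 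Finally, since $\ell$ is fixed, $(n+1)^\ell$ grows only polynomially in $n$ while $4^n$ grows exponentially, so $\frac{\ell(n+1)^\ell}{4^n}\to 0$ as $n\to\infty$, giving the last assertion.

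The only mild obstacle is getting the combinatorial constant to come out exactly as $\ell(n+1)^\ell$ rather than something slightly larger; this is a matter of choosing the cleanest intermediate bound for $\binom{n+\ell}{\ell-1}$ and does not affect the qualitative conclusion. Everything else is immediate from the previously established theorem.
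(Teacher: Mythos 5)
Your proposal is correct and follows essentially the same route as the paper: Markov's inequality applied to the integer-valued variable $\#\mcZ(\fkf,\bbR^n_+)$, the bound $\frac{n+1}{4^n}\binom{n+\ell}{n+1}$ from Theorem~\ref{theo:MainTheoremNumberZeros_variancesunmixed}, and an elementary estimate of the binomial coefficient. The combinatorial step you leave vague (and whose displayed manipulation is garbled) is cleanest via the identity $(n+1)\binom{n+\ell}{n+1}=\ell\binom{n+\ell}{\ell}=\ell\prod_{k=1}^{\ell}\bigl(\tfrac{n}{k}+1\bigr)\leq \ell(n+1)^{\ell}$, which is exactly how the paper concludes.
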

\begin{remark}
If we substitute the constant $\ell$ by logpolynomial function $\ell(n)=\mcO(\log^a(n))$ with $a>0$ a constant, the limit probability would still be zero.  
\end{remark}

\subsection{Main Result in Full Technical Detail}

Recall that given a finite set $A\subset\bbR^n$ and a map
\[
\liftfunct:A\rightarrow \bbR,
\]
the \emph{upper envelope of $A$ with respect $\liftfunct$} is the convex polyhedron given by

\begin{equation}
\mcL(A,\liftfunct):=\conv\left\{\begin{pmatrix} \liftfunct(\alpha)-s\\\alpha\end{pmatrix}\mid \alpha\in A,\,s\geq 0\right\}.
\end{equation}
Intuitively, this set arises when we lift $A$ according to $\liftfunct$ and we look at the convex hull of these lifted points,
\[
P^\liftfunct:=\conv\left\{\begin{pmatrix} \liftfunct(\alpha)\\\alpha\end{pmatrix}\mid \alpha\in A\right\},
\]
from above---see Figure~\ref{fig:exam1} for an example. We can now state our main theorem, from which  Theorems~\ref{theo:MainTheoremNumberZeros} and \ref{theo:MainTheoremNumberZeros_variances} will also follow. 

\begin{theo}\label{theo:maintheorem_fulldetails}
Let $A_1,\ldots,A_n\subset \bbR^n$ be finite sets of sizes $t_1,\ldots,t_n$, and $\fkf$ the the system of $n$ random fewnomials given by
\begin{equation*}
\left.
    \begin{array}{rl}
        \fkf_1&=\sum_{\alpha\in A_1}\fkf_{1,\alpha}X^\alpha\\
        &\,\vdots\\
        \fkf_n&=\sum_{\alpha\in A_n}\fkf_{n,\alpha}X^\alpha\\
    \end{array}
\right.
\end{equation*}
where the $\fkf_{k,\alpha}$ are independent centered Gaussian random variables. Consider, for each $\fkf_k$, its variance vector
\[
\var_k=(\var_{k,\alpha})_{\alpha\in A_k}
\]
and construct the lifting functions $\liftfunct_k:A_k\rightarrow \bbR$ given by
\begin{equation}\label{eq:liftingfunction}
    \liftfunct_k:\alpha\mapsto \frac{1}{2}\ln \var_{k,\alpha}.
\end{equation}
Then we have
\begin{equation}\label{eq:mainexpectedzerosbound}
    \bbE_{\fkf}\#\mcZ_r(\fkf,\bbR^n_+)\leq \frac{1}{4^n}\vertex\left(\sum_{k=1}^n\mcL(A_k,\liftfunct_k)\right) \prod_{k=1}^n(t_k-1)
\end{equation}
where $\vertex\left(\sum_{k=1}^n\mcL(A_k,\liftfunct_k)\right)$ is the number of vertices of the Minkowski sum $\sum_{k=1}^n\mcL(A_k,\liftfunct_k)$.
\end{theo}

We note that one can interpret the expression
\[
\vertex\left(\sum_{k=1}^n\mcL(A_k,\liftfunct_k)\right)
\]
as the number of elements of $\sum_{k=1}^nA_k$ that are used in the regular mixed subdivision induced by the lifting functions $\liftfunct_1,\ldots,\liftfunct_n$ which are defined by the logarithm of the variances. Indeed, we find the theory of regular (mixed) subdivisions \cite{deloerarambausantosbooktriangulations} at the core of our development---see Figure~\ref{fig:exam1}. Subdivisions have been used earlier in the work of Bihan, Santos, and Spaenlehauer \cite{bihansantosspaenlehauer2018} and Sturmfels~\cite{sturmfels1994zeros} for constructing fewnomial systems with many zeros, in the work of Sturmfels on the Newton polytope of $A$-resultants \cite[Sect. 2]{sturmfels1994}, in the seminal work by Gelfand, Kapranov and Zelevinsky \cite{gkzbook}, and implicitly in Viro's patchworking method  \cite{viro2008sixteenth} (see \cite[Section 2.2]{ergur2022polyhedral} for an exposition). Further, we can find a lifting `similar' to ours in the work of Avendaño, Kogan, Nisse and Rojas~\cite{avendanokogannisserojas2018} regarding complex fewnomials. We don't have any good explanation for why mixed regular subdivisions appear in random fewnomial theory, but there seems to be a deep connection to be uncovered.

\begin{figure}[hbt]
    \centering
    \begin{subfigure}[t]{0.33\textwidth}
        \centering
        \includegraphics[width=0.6\textwidth]{imgs/A1_lift.png}
        \caption{$\mcL(A_1,\liftfunct_1)$}
    \end{subfigure}\hfil
    \begin{subfigure}[t]{0.33\textwidth}
        \centering
        \includegraphics[width=0.6\textwidth]{imgs/A2_lift.png}
        \caption{$\mcL(A_2,\liftfunct_2)$}
    \end{subfigure}\hfil
    \begin{subfigure}[t]{0.33\textwidth}
        \centering
        \includegraphics[width=0.6\textwidth]{imgs/AS_lift.png}
        \caption{$\mcL(A_1,\liftfunct_1)+\mcL(A_2,\liftfunct_2)$}
    \end{subfigure}
    \medskip
        \begin{subfigure}[b]{0.33\textwidth}
        \centering
        \includegraphics[width=0.6\textwidth]{imgs/A1_sub.png}
        \caption{Regular subdivision on $A_1$\\induced by $\liftfunct_1$ }
    \end{subfigure}\hfil
    \begin{subfigure}[b]{0.33\textwidth}
        \centering
        \includegraphics[width=0.6\textwidth]{imgs/A2_sub.png}
        \caption{Regular subdivision on $A_2$\\induced by $\liftfunct_2$}
    \end{subfigure}\hfil
    \begin{subfigure}[b]{0.33\textwidth}
        \centering
        \includegraphics[width=0.6\textwidth]{imgs/AS_sub.png}
        \caption{Mixed regular subdivision\\on $A_1+A_2$ induced by $(\liftfunct_1,\liftfunct_2)$ }
    \end{subfigure}
    \caption{Example of $\mcL(A_1,\liftfunct_1)+\mcL(A_2,\liftfunct_2)$ 
    (in the first row) and the induced mixed regular subdivision (in the second row) with $A_1=\{(0,0),(1,0),(0,0)\}$ with $\liftfunct_1((0,0))=1$, $\liftfunct_1((1,0))=0$, $\liftfunct_1((0,1))=0$ and $A_1=\{(0,0),(1,0),(0,1),(1,1)\}$ with $\liftfunct_2((0,0))=0$, $\liftfunct_2((1,0))=0$, $\liftfunct_2((0,1))=0$, $\liftfunct_2((1,1))=1$.}
    \label{fig:exam1}
\end{figure}

\subsection{Bounds on Volume of Random Projective Fewnomial Varieties}

In complex algebraic geometry, the (normalized) volume of a complex algebraic variety is just the degree \cite[Corollary~20.10]{conditionbook}. In real algebraic geometry, we can only talk about the average volume of a random variety--- (normalized) volume is also referred to as  ``average degree'' ~\cite{burgisserlerario2020}. Shub and Smale~\cite{shubsmale1993} showed that for a random KSS (Kostlan-Shub-Smale) homogeneous polynomial system $\fkf$ of degrees $d_1,\ldots,d_q$ in the $n+1$ variables $X_0,X_1,\ldots,X_n$, we have that
\begin{equation}\label{eq:kssvolume}
    \bbE_{\fkf}\,\vol_{n-q}\mcZ(\fkf,\bbP^n_{\bbR})=\frac{\vol_{n-q}\bbP^{n-q}_{\bbR}}{\vol_{n}\bbP^{n}_{\bbR}}\,\sqrt{\prod_{k=1}^q d_k},
\end{equation}
where $\vol_{k}\bbP^{k}_{\bbR}$ is the volume under the usual Riemannian structure. 
Moreover, we even have a central limit theorem in the case that all degrees are equal~\cite{armentanoazaisdalmaoleon2022}. The following result shows that, for random projective varieties given by random fewnomials, the average volume can be bounded independently of the degree. We give the proof in Section~\ref{sec:volumes}.

\begin{theo}\label{theo:MainTheoremNumberZeros_volume}
Let $q\leq n$, $d_1,\ldots,d_q\in \bbN$ and $A_1,\ldots,A_q\subset \bbN^{n+1}$ be finite subsets of sizes $t_1,\ldots,t_q$ such that for all $k$,
\[\{d_k e_0,\ldots,d_k e_n\}\subseteq A_k\subseteq d_k\Delta_n\]
where $\{e_0,\ldots,e_n\}$ is the standard basis of $\bbR^{n+1}$ and $\Delta_n:=\conv(e_0,\ldots,e_n)$ the $n$-simplex. Consider the random homogeneous fewnomial system $\fkf$ given by
\begin{equation*}
\left.
    \begin{array}{rl}
        \fkf_1&=\sum_{\alpha\in A_1}\fkf_{1,\alpha}X^\alpha\\
        &\,\vdots\\
        \fkf_q&=\sum_{\alpha\in A_q}\fkf_{q,\alpha}X^\alpha\\
    \end{array}
\right.
\end{equation*}
where the $\fkf_{k,\alpha}$ are independent centered Gaussian random variables (with no restrictions placed on the variances). Then, with probability one, $\mcZ(\fkf,\bbP^n_{\bbR})$ is of dimension at most $n-q$ and
\begin{equation}\label{eq:fewnomialvolume}
    \bbE_{\fkf}\,\vol_{n-q}\mcZ(\fkf,\bbP^n_{\bbR})\leq \frac{\vol_{n-q}\bbP^{n-q}_{\bbR}}{\vol_{n}\bbP^{n}_{\bbR}}\,\frac{(n(n+1))^{n-q}}{2^n}\prod_{k=1}^qt_k(t_k-1) 
\end{equation}
\end{theo}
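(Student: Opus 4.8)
The plan is to reduce the statement about the volume of a projective fewnomial variety to a statement about counting real zeros of a zero-dimensional random fewnomial system, so that Theorem~\ref{theo:maintheorem_fulldetails} (and hence its specialization Theorem~\ref{theo:MainTheoremNumberZeros}) can be applied. The classical tool for this is the integral-geometric (Cauchy--Crofton / kinematic) formula on real projective space: for a generic great $q$-subsphere $L$ (equivalently a generic linear subspace $\bbP^q_\bbR\subset\bbP^n_\bbR$), one has
\[
\vol_{n-q}\mcZ(\fkf,\bbP^n_\bbR)=\frac{\vol_{n-q}\bbP^{n-q}_\bbR}{\vol_{n}\bbP^{n}_\bbR}\,\bbE_{L}\#\bigl(\mcZ(\fkf,\bbP^n_\bbR)\cap L\bigr),
\]
where the expectation is over $L$ chosen uniformly (with respect to the orthogonal group). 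Taking the expectation over $\fkf$ as well and swapping the two expectations via Fubini, it suffices to bound, for a fixed generic $L\cong\bbP^{n-q}$, the quantity $\bbE_\fkf\#\bigl(\mcZ(\fkf,\bbP^n_\bbR)\cap L\bigr)$; intersecting with $L$ amounts to adding $q$ generic linear equations, leaving a square system of $n$ fewnomials in $n$ variables (after dehomogenizing), whose support sets still satisfy the simplex-sandwich hypothesis $\{d_ke_0,\dots,d_ke_n\}\subseteq A_k\subseteq d_k\Delta_n$.

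Next I would pass from $\bbP^n_\bbR$ to the positive orthant. The issue is that $\bbP^n_\bbR$ is not covered by a single copy of $\bbR^n_+$, but it is covered by finitely many affine charts, and on each chart the relevant monomials (after a monomial change of variables induced by picking a vertex of $\Delta_n$ as origin) have their exponent vectors lying in translates of $d_k\Delta_n$, so in each chart one gets a random fewnomial system with $t_k$ terms per equation to which Theorem~\ref{theo:MainTheoremNumberZeros} applies with bound $4^{-n}\prod_k t_k(t_k-1)$. The finitely many sign-orthants of $\bbR^n$ that tile (the torus part of) $\bbP^n_\bbR$ number $2^n$, and by symmetry of the centered Gaussian distribution each contributes at most $4^{-n}\prod_k t_k(t_k-1)$ in expectation; summing over the $2^n$ orthants (and noting the linear-space and coordinate-hyperplane contributions are measure-zero with probability one, using the hypothesis that $d_ke_i\in A_k$ to guarantee the intersection with each coordinate subspace is again a nondegenerate fewnomial system of the right shape) yields a factor $2^n\cdot 4^{-n}\prod_k t_k(t_k-1)=2^{-n}\prod_k t_k(t_k-1)$ per choice of $L$. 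Combined with the integral-geometric identity this already gives the $\prod_k t_k(t_k-1)$ and the $2^{-n}$ in \eqref{eq:fewnomialvolume}; the surviving combinatorial factor $(n(n+1))^{n-q}$ must come from the $q$ extra linear cuts, since adding one generic linear form in $n+1$ homogeneous variables multiplies the relevant term-count bound by roughly $n+1$ per equation and these appear in $n-q$ of the per-chart estimates — I would track this bookkeeping carefully to land exactly on $(n(n+1))^{n-q}$ rather than a worse constant.

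The dimension claim — that $\mcZ(\fkf,\bbP^n_\bbR)$ has dimension at most $n-q$ with probability one — should follow from a transversality / Bertini-type argument: a generic fewnomial system with at least one nonzero coefficient in each of $q$ generically chosen linearly independent directions cuts dimension by exactly one at each step, and a Gaussian vector with no degenerate marginals is generic with probability one; the hypothesis $\{d_ke_0,\dots,d_ke_n\}\subseteq A_k$ is exactly what makes each $\fkf_k$ have, with probability one, no common factor forcing a lower-dimensional locus to blow up in dimension. The hard part will be the passage to charts: one must make sure that the monomial change of coordinates on each chart is invertible over the positive orthant (it is, being a monomial map with an integer unimodular exponent matrix coming from the simplex), and that the exponent sets that appear in the chart really do have the same cardinalities $t_k$ and lie in the right translated simplices, so that the hypotheses of Theorem~\ref{theo:MainTheoremNumberZeros} are genuinely met and the crude union bound over charts is legitimate; this, together with pinning down the precise constant from the $q$ linear cuts, is where the real work lies, the rest being an assembly of the integral-geometric formula and the already-proved zero-counting theorem.
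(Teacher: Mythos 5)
Your overall strategy is the one the paper uses: Crofton/kinematic formula on $\bbP^n_\bbR$, Tonelli to merge the two expectations, exclusion of the coordinate hyperplanes via the hypothesis $d_ke_i\in A_k$, the $2^n$ sign-orthants with Gaussian sign-symmetry, and finally the zero-counting theorem on the positive orthant. But there is a concrete gap in the part you yourself flag as unresolved, and it starts with a dimension miscount. Since $\mcZ(\fkf,\bbP^n_\bbR)$ has dimension $n-q$, the complementary linear section is $L\cong\bbP^{q}$, cut out by $n-q$ (not $q$) random hyperplanes $\fkl_1,\ldots,\fkl_{n-q}$; the augmented system $(\fkf,\fkl_1,\ldots,\fkl_{n-q})$ then has exactly $q+(n-q)=n$ equations, which is what makes it square. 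Your text oscillates between ``$L\cong\bbP^q$'', ``$L\cong\bbP^{n-q}$'', ``$q$ extra linear cuts'' and ``$n-q$ of the per-chart estimates'', and with $q$ linear cuts the system would have $2q$ equations and the argument would not close.

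Once the count is fixed, the factor $(n(n+1))^{n-q}$ is not delicate bookkeeping to be tracked through charts: take the $\fkl_k$ with i.i.d.\ standard Gaussian coefficients, so that each is itself a centered-Gaussian fewnomial with support of size $n+1$, and apply Theorem~\ref{theo:MainTheoremNumberZeros} once to the whole augmented system of $n$ random fewnomials on $\bbR^n_+$. This gives $4^{-n}\prod_{k=1}^q t_k(t_k-1)\cdot\bigl((n+1)n\bigr)^{n-q}$ directly, and $2^n\cdot 4^{-n}=2^{-n}$ supplies the remaining constant; no per-chart monomial changes of variables are needed, only one dehomogenization plus sign symmetry. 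Two further points where the paper is more careful than your sketch: the claim that $\mcZ(\fkf,\bbP^n_\bbR)$ meets no coordinate hyperplane is proved by induction on $n$ (restricting the $n$-equation augmented system to $X_i=0$ yields an overdetermined system on $\bbP^{n-1}$ with no zeros almost surely, which is where $\{d_ke_0,\ldots,d_ke_n\}\subseteq A_k$ enters), and the dimension statement does not need a separate Bertini argument — it is absorbed into the kinematic identity by stratifying $\mcZ(f,\bbP^n_\bbR)$ and declaring the volume infinite when a stratum of dimension exceeding $n-q$ is nonempty, so that the finiteness of the final bound forces the dimension claim with probability one.
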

\begin{remark}

In Theorem~\ref{theo:MainTheoremNumberZeros_volume}, we are imposing $\conv(A_k)=d_k\Delta_n$ to guarantee that $\mcZ(\fkf,\bbP^n_{\bbR})$ does not contain a coordinate hyperplane with probability one. Although this can be achieved under more general conditions, we prioritize a simple statement involving the $d_ke_j$.
\end{remark}
\begin{remark}
By our assumption, we have $t_k\geq n+1$. Hence, when degree is large, for example, when
\[
t_k\leq \frac{2^{n/q}}{(n(n+1))^{n/q-1}}\sqrt{d_k},
\]
\eqref{eq:fewnomialvolume} gives a better bound than \eqref{eq:kssvolume}. In particular, the random projective fewnomial hypersurface given by
\[
\sum_{i=0}^n\fkf_iX_i^d
\]
has an average volume bounded by
\[
\frac{\vol_{n-q}\bbP^{n-q}_{\bbR}}{\vol_{n}\bbP^{n}_{\bbR}}\,\left(\frac{n(n+1)}{2}\right)^n,
\]
which does not depend on the degree.
\end{remark}

\subsection{Overview and Outline of the Proof}
Our proof follows a similar path to the proof in~\cite{BETC-fewnomials}. We first prove an integral-probabilistic formula for the number of zeros which depends on the determinant of a random matrix. To bound this complicated integral, we use the Cauchy-Binet formula\footnote{Curiously enough, this formula appears in the proof of many upper bounds in fewnomial theory~\cite{bihansottile2007,burgisser2023,BETC-fewnomials}.} and express the determinant as a summation of many determinants of $n \times n$ matrices. In turn, this splits the integral formula into a sum of integrals over determinants of $n \times n$ matrices. We bound these summands by interpreting them as expected number of real zeros of a specific system of random polynomials and bounding these expectations simply by the maximum number of zeros of these special systems. 

Expanding the above, we prove the integral-probabilistic formula \eqref{eq:intprobformula} in Section~\ref{sec:riceformula} using kinematic formulas \cite{burgisserlerario2020,howard1993} and a measure theoretic result of Appendix~\ref{sec:meeasures}. After this, in Section~\ref{sec:proofMainResult}, we use the Cauchy-Binet formula~\cite[Theorem 4.15]{broidagill1989linearalgebratextbook} to split the integral into several summands. However,  the proof complicates and we need to do a polyhedral subdivision---induced by the normal fan of $\sum_{k=1}^n\mcL(A_k,\liftfunct_k)$ in the statement of Theorem~\ref{theo:maintheorem_fulldetails}---to perform the reduction of the random general case to the deterministic case---a binomial system---analyzed in Proposition~\ref{prop:boundbinomialsystem}.

Although the above reduction from the general random case to special deterministic cases plays a central role in our proof, a direct analytical proof is possible. We provide a fully analytical proof of Proposition~\ref{prop:boundbinomialsystem} to show this fact.

\subsection*{Organization}
In Section~\ref{sec:riceformula} we obtain a Rice formula for the expected number of real zeros. Section~\ref{sec:proofMainResult} proves our main result. Section~\ref{sec:proofMainResultUnmixed} deals with unmixed systems, and Section~\ref{sec:volumes} contains the proof of volume estimate for random fewnomial varieties.

\section{A Rice Formula for the Expected Number of Zeros}\label{sec:riceformula}

At the center of \cite{burgisser2023}, we find an integral-probabilistic formula for the number of expected zeros of a random system. This formula is only implicitly stated in a special case (the case where $\psi$ is injective). We make this formula explicit and prove it in its full generality. We do this by combining the kinematic formulas \cite{burgisserlerario2020,howard1993}  used in \cite{burgisser2023} with standard arguments of measure theory, such as Dynkin's lemma \cite[4.11]{aliprantisborder2006book}.
We note that the formula below is a special case of Rice formula~\cite[Theorem 6.2]{azaiswschebor2009}. We provide a proof using the kinematic formula for completeness and because this proof is interesting in its own right.

Recall that for $f:U\subseteq\bbR^n\rightarrow \bbR^n$ with $U\subseteq \bbR^n$ open,
\[
\mcZ_r(f,U):=\{\zeta\in U\mid f(x)=0,\,\det \diff_xf(x)\neq 0\}
\]
is the set of regular zeros of $f$ in $U$. The following theorem gives the promised integral-probabilistic formula for the number of zeros of a random system where each equation is generated independently. 

\begin{theo}[Rice formula]\label{theo:intprobformula}
Let $\Omega\subseteq \bbR^n$ be an open set and let
\[\varphi_k:\Omega\rightarrow \bbR^{m_k+1}\]
be smooth maps such that the map $\psi:\Omega\rightarrow \prod_{k=1}^n \bbS^{m_k}$ given by
\begin{equation}
    \psi:x\mapsto \begin{pmatrix}\psi_1(x)\\\vdots\\\psi_n(x)\end{pmatrix}:=\begin{pmatrix}\varphi_1(x)/\|\varphi_1(x)\|\\\vdots\\\varphi_n(x)/\|\varphi_n(x)\|\end{pmatrix}
\end{equation}
is well-defined. Consider a random system of equations $\fkf_1(x)=0,\ldots,\fkf_n(x)=0$ given by
\[\fkf_k:=\sum_{i=0}^{m_k+1}\fkc_{k,i}\varphi_{k,i}\]
where the $\fkc_{k,i}$ are i.i.d. standard Gaussian random variables. %Assume that for every compact set $K\subseteq \Omega$, $\bbE_{\fka}\#\mcZ_r(\fkf,K)$ is finite. 
Then, for every Borel measurable set $\tilde{\Omega}\subseteq\Omega$,
\begin{equation}\label{eq:intprobformula}
    \bbE_{\fkc}\#\mcZ_r(\fkf,\tilde{\Omega})=(2\pi)^{-n/2}\int_{\tilde{\Omega}}\,\bbE_{\fka}\left|\det\begin{pmatrix}
    \fka_1^T\diff_x\psi_1\\
    \vdots\\
    \fka_n^T\diff_x\psi_n
\end{pmatrix}\right|\,\mathrm{d}x
\end{equation}
where the $\fka_k$ are independent standard Gaussian vectors in $\psi_k(x)^\perp\subset \bbR^{m_k+1}$, i.e., the subspace orthogonal to $\psi_k(x)$.
\end{theo}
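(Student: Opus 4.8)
The plan is to realize $\mcZ_r(\fkf,\tilde\Omega)$ as a preimage and apply the integral geometry / coarea machinery that underlies the kinematic formula, then integrate out the Gaussian coefficients. First I would observe that, because $\fkf_k=\sum_i \fkc_{k,i}\varphi_{k,i}$ with $(\fkc_{k,i})_i$ i.i.d.\ standard Gaussian, the vanishing of $\fkf_k$ at $x$ depends only on the direction $\psi_k(x)=\varphi_k(x)/\|\varphi_k(x)\|\in\bbS^{m_k}$: indeed $\fkf_k(x)=\|\varphi_k(x)\|\,\langle\fkc_k,\psi_k(x)\rangle$, so $\fkf_k(x)=0\iff \langle\fkc_k,\psi_k(x)\rangle=0$. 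Writing $\fkc_k=r_k\,\omega_k$ in polar coordinates on $\bbR^{m_k+1}$, the radial part $r_k$ is irrelevant to the zero set and the angular part $\omega_k$ is uniform on $\bbS^{m_k}$; hence it suffices to compute $\bbE_{\omega}\#\mcZ_r(\fkf,\tilde\Omega)$ where the $\omega_k$ are independent uniform on the spheres $\bbS^{m_k}$, and then account for the radial Gaussian normalization separately in the Jacobian (this is where the $(2\pi)^{-n/2}$ will come from, together with $\bbE|r_k|$-type constants that must be tracked carefully).

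Next I would set up the incidence picture. Each equation $\langle\omega_k,\psi_k(x)\rangle=0$ says that $\psi_k(x)$ lies on the great subsphere $\omega_k^\perp\cap\bbS^{m_k}$, equivalently that the point $\psi_k(x)$ of $\bbS^{m_k}$ is ``hit'' by a uniformly random hyperplane through the origin. So $\mcZ_r(\fkf,\tilde\Omega)=\psi^{-1}\big(\prod_k(\omega_k^\perp\cap\bbS^{m_k})\big)\cap\tilde\Omega$ (intersected with the regularity condition). This is exactly the setting of the spherical kinematic formula of Howard \cite{howard1993} (as used in \cite{burgisserlerario2020,burgisser2023}): the expected number of intersection points of the fixed submanifold $\psi(\tilde\Omega)$ (of codimension... here $\psi$ maps an $n$-dimensional set into $\prod\bbS^{m_k}$ and we intersect with a product of great subspheres of total codimension $n$) with a uniformly random product of great subspheres is an integral over $\tilde\Omega$ of an averaged absolute Jacobian. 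Concretely, applying the kinematic formula one equation at a time (using independence of the $\omega_k$ and Fubini), the density at $x$ is the expected absolute value of the determinant of the ``normal Jacobian'' of the map $x\mapsto(\langle\omega_1,\psi_1(x)\rangle,\ldots,\langle\omega_n,\psi_n(x)\rangle)$ conditioned on all these being zero, which after differentiating gives the matrix with rows $\omega_k^T\diff_x\psi_k$ — and conditioning $\langle\omega_k,\psi_k(x)\rangle=0$ is precisely the statement that $\omega_k$ is uniform on $\psi_k(x)^\perp\cap\bbS^{m_k}$, i.e.\ in the direction of $\fka_k$. Rescaling the uniform $\fka_k$ back to a standard Gaussian in $\psi_k(x)^\perp$ multiplies the $k$-th row by a scalar whose expected absolute value is the constant that combines with the sphere-volume ratios.

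The technical heart, and the step I expect to be the main obstacle, is the measure-theoretic bookkeeping needed to pass from the kinematic formula — which is naturally a statement about $\psi$ being an \emph{embedding} or about $\tilde\Omega$ small — to the general Borel set $\tilde\Omega$ and general (possibly non-injective) $\psi$. This is exactly the gap the authors flag: \cite{burgisser2023} only treats the injective case. The plan here is: (i) prove the identity first for $\tilde\Omega$ ranging over a convenient generating $\pi$-system of ``nice'' open sets on which $\psi$ is an embedding and the kinematic formula applies verbatim; (ii) check that both sides, as functions of $\tilde\Omega$, are ($\sigma$-finite) measures — the left side because $\#\mcZ_r(\fkf,\cdot)$ is a counting measure so its expectation is a measure, the right side manifestly as an integral of a nonnegative density; (iii) invoke Dynkin's $\pi$-$\lambda$ lemma \cite[4.11]{aliprantisborder2006book} to conclude equality on all Borel sets, which is the Appendix~\ref{sec:meeasures} result the introduction refers to. Along the way one must handle the regularity/measure-zero issues: the non-regular part $\{x:\det(\ldots)=0\}$ contributes $0$ to the right side and, by Sard-type arguments applied after conditioning, the set of $(\fkc,x)$ with $x$ a non-regular zero has expectation zero, so restricting to $\mcZ_r$ rather than $\mcZ$ is harmless for the formula. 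Finally I would double-check the normalizing constant by specializing to the classical case (e.g.\ $n=1$, $\varphi_1$ the monomials of a degree-$d$ dense polynomial, where the right-hand side must reproduce the Kac–Rice / Kostlan count), which pins down that the sphere-volume ratios and the Gaussian radial moments collapse exactly to $(2\pi)^{-n/2}$.
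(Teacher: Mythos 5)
Your plan follows essentially the same route as the paper: rewrite the zero count as the number of intersections of $\psi(\tilde\Omega)$ with a product of independent uniformly random great subspheres, apply the spherical kinematic formula of Howard/B\"urgisser--Lerario to get the density $\bbE_{\fka}\bigl|\det(\fka_k^T\diff_x\psi_k)_k\bigr|$ with $\fka_k$ supported on $\psi_k(x)^\perp$, and then extend from open sets on which $\psi$ is an embedding to arbitrary Borel sets via a $\pi$-$\lambda$ (Dynkin) argument, exactly as in the paper's Appendix.

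There is, however, one concrete gap: your extension step (i) takes as $\pi$-system the ``nice'' open sets on which $\psi$ is an embedding, but such sets form a base of the topology of $\Omega$ only where $\psi$ is an \emph{immersion}. The theorem places no immersion hypothesis on $\psi$, and $\mathrm{rank}\,\diff_x\psi$ may drop on a set $V$ with nonempty interior (e.g.\ when the $\varphi_k$ are degenerate in some directions); around points of $V$ no neighborhood admits an embedding, so your generating family neither covers $\Omega$ nor generates the Borel $\sigma$-algebra in the way the Dynkin argument requires. Your Sard-type remark does not repair this, since it concerns non-regular zeros of $\fkf$ at points where the kinematic formula already applies, not the failure of the formula's hypotheses on $V$. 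The paper closes this by a preliminary excision: on $V$ every zero of $\fkf$ is automatically non-regular (if $\diff_x\psi v=0$ then $\diff_x\fkf(v)$ is a multiple of $\fkf(x)$ rowwise), so $\mcZ_r(\fkf,V)=\varnothing$ surely, and the integrand on the right-hand side vanishes identically on $V$ because every determinant $\det(a_k^T\diff_x\psi_k)_k$ is zero there; hence both sides are unchanged when $\Omega$ is replaced by $\Omega\setminus V$, after which $\psi$ is an immersion, local embeddings exist, and your steps (i)--(iii) go through as in the paper (with the additional finiteness check on each small open set needed for the measure-theoretic lemma).
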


\begin{cor}\label{cor:intprobformula_phiform}
Under the same assumptions and notations of Theorem~\ref{theo:intprobformula}, for every Borel measurable set $\tilde{\Omega}\subseteq\Omega$,
\begin{equation}\label{eq:intprobformula_phiform}
    \bbE_{\fkc}\#\mcZ_r(\fkf,\tilde{\Omega})=(2\pi)^{-n/2}\int_{\tilde{\Omega}}\,\frac{1}{\prod_{k=1}^n\|\varphi_k(x)\|}\bbE_{\fka}\left|\det\begin{pmatrix}
    \fka_1^T\diff_x\varphi_1\\
    \vdots\\
    \fka_n^T\diff_x\varphi_n
\end{pmatrix}\right|\,\mathrm{d}x
\end{equation}
where the $\fka_k$ are independent standard Gaussian vectors in the subspace $\varphi_k(x)^\perp$.
\end{cor}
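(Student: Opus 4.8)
The plan is to obtain Corollary~\ref{cor:intprobformula_phiform} directly from the Rice formula of Theorem~\ref{theo:intprobformula} by a single application of the quotient rule, relating $\diff_x\psi_k$ to $\diff_x\varphi_k$ row by row. The first observation I would record is that, since $\psi_k(x)=\varphi_k(x)/\|\varphi_k(x)\|$ is a positive scalar multiple of $\varphi_k(x)$, the linear subspaces $\psi_k(x)^\perp$ and $\varphi_k(x)^\perp$ of $\bbR^{m_k+1}$ coincide. Hence the independent standard Gaussian vectors $\fka_k$ appearing in \eqref{eq:intprobformula} (a priori taken in $\psi_k(x)^\perp$) may equally well be regarded as standard Gaussian in $\varphi_k(x)^\perp$, which is exactly the requirement in the statement of the corollary; so the probabilistic object $\bbE_{\fka}$ is literally the same in both formulas.

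Next I would carry out the differentiation. Writing $r_k:=\|\varphi_k(x)\|$ and $\Pi_k:=\mathrm{Id}-\psi_k(x)\psi_k(x)^T$ for the orthogonal projection of $\bbR^{m_k+1}$ onto $\psi_k(x)^\perp=\varphi_k(x)^\perp$, the quotient rule gives
\[
\diff_x\psi_k \;=\; \frac{1}{r_k}\,\diff_x\varphi_k \;-\; \frac{1}{r_k^{3}}\,\varphi_k(x)\varphi_k(x)^T\,\diff_x\varphi_k \;=\; \frac{1}{r_k}\,\Pi_k\,\diff_x\varphi_k ,
\]
the middle term being precisely the contribution of differentiating $x\mapsto 1/\|\varphi_k(x)\|$. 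Now left-multiply by $\fka_k^T$: because $\Pi_k$ is self-adjoint and fixes $\fka_k$ (as $\fka_k\in\varphi_k(x)^\perp$), we have $\fka_k^T\Pi_k=(\Pi_k^T\fka_k)^T=(\Pi_k\fka_k)^T=\fka_k^T$, and therefore
\[
\fka_k^T\,\diff_x\psi_k \;=\; \frac{1}{\|\varphi_k(x)\|}\,\fka_k^T\,\diff_x\varphi_k .
\]
Thus the $k$-th row of the matrix in \eqref{eq:intprobformula} equals $\|\varphi_k(x)\|^{-1}$ times the $k$-th row of the matrix in \eqref{eq:intprobformula_phiform}.

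Finally I would conclude by multilinearity of the determinant: pulling the scalar $\|\varphi_k(x)\|^{-1}$ out of the $k$-th row for each $k=1,\dots,n$ yields
\[
\det\begin{pmatrix}\fka_1^T\diff_x\psi_1\\ \vdots\\ \fka_n^T\diff_x\psi_n\end{pmatrix}
\;=\; \frac{1}{\prod_{k=1}^n\|\varphi_k(x)\|}\,\det\begin{pmatrix}\fka_1^T\diff_x\varphi_1\\ \vdots\\ \fka_n^T\diff_x\varphi_n\end{pmatrix}.
\]
Taking absolute values, noting that the factor $\prod_{k=1}^n\|\varphi_k(x)\|^{-1}$ is deterministic given $x$ and may therefore be taken outside $\bbE_{\fka}$ and placed inside the outer integral over $\tilde\Omega$, transforms \eqref{eq:intprobformula} into \eqref{eq:intprobformula_phiform}. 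I do not anticipate a genuine obstacle; the only point requiring care is bookkeeping, since $\varphi_k$ is vector-valued and $\diff_x\varphi_k$ is an $(m_k+1)\times n$ matrix, so one must keep track of the projection $\Pi_k$ acting on the left and verify it is absorbed by $\fka_k$ — once the single-row identity above is in place, the rest is routine.
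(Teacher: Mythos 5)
Your proposal is correct and is essentially the paper's own argument: the paper's one-line proof invokes exactly the identity $\diff_x\psi_k=\|\varphi_k(x)\|^{-1}(\bbI-\psi_k(x)\psi_k(x)^T)\diff_x\varphi_k$, the absorption of the projection by $\fka_k\in\psi_k(x)^\perp=\varphi_k(x)^\perp$, and multilinearity of the determinant, which you have simply written out in full detail. No gaps.
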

\begin{cor}\label{cor:intprobformula_toruscase}
Under the same assumptions and notations of Theorem~\ref{theo:intprobformula}, if 
\[m_1=m_2=\cdots=m_n=1\]
and, for some $\xi:\Omega\rightarrow \bbR^n$,
\[
\varphi_1=\begin{pmatrix}
    1\\
    \xi_1(x)
\end{pmatrix},\ldots,\varphi_n=\begin{pmatrix}
    1\\
    \xi_n(x)
\end{pmatrix}
\]
then, for every Borel measurable set $\tilde{\Omega}\subseteq\Omega$,
\begin{equation}\label{eq:intprobformula_toruscase}
    \bbE_{\fkc}\#\mcZ_r(\fkf,\tilde{\Omega})=\frac{1}{\pi^n}\int_{\tilde{\Omega}}\,\frac{\left|\det \diff_x\xi\right|}{\prod_{k=1}^n(1+\xi_k(x)^2)}\,\mathrm{d}x
\end{equation}
\end{cor}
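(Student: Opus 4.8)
The plan is to obtain \eqref{eq:intprobformula_toruscase} by specializing Corollary~\ref{cor:intprobformula_phiform} to the concrete maps $\varphi_k=(1,\xi_k(x))^T$ and evaluating every factor of the integrand explicitly. First I would record the trivial two-dimensional linear algebra: since $\varphi_k(x)=(1,\xi_k(x))^T$, we have $\|\varphi_k(x)\|^2=1+\xi_k(x)^2$, and $\varphi_k(x)^\perp$ is the line in $\bbR^2$ spanned by the unit vector $u_k(x):=(1+\xi_k(x)^2)^{-1/2}(-\xi_k(x),\,1)^T$. Consequently a standard Gaussian vector $\fka_k$ in the one-dimensional space $\varphi_k(x)^\perp$ can be written as $\fka_k=g_k\,u_k(x)$, where $g_1,\dots,g_n$ are independent standard Gaussian scalars.

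Next I would compute the rows of the matrix appearing inside the determinant in \eqref{eq:intprobformula_phiform}. Because the first coordinate of $\varphi_k$ is the constant function $1$, its differential is $\diff_x\varphi_k=\begin{pmatrix}0\\\diff_x\xi_k\end{pmatrix}$, so that
\[
\fka_k^T\diff_x\varphi_k = g_k\,u_k(x)^T\begin{pmatrix}0\\\diff_x\xi_k\end{pmatrix} = \frac{g_k}{\sqrt{1+\xi_k(x)^2}}\,\diff_x\xi_k .
\]
Stacking these $n$ rows shows that the matrix in \eqref{eq:intprobformula_phiform} equals $\Delta\cdot\diff_x\xi$, where $\Delta$ is the diagonal matrix with entries $g_k(1+\xi_k(x)^2)^{-1/2}$ and $\diff_x\xi$ is the $n\times n$ Jacobian of $\xi$. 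Hence its absolute determinant factors as $\bigl(\prod_{k=1}^n|g_k|(1+\xi_k(x)^2)^{-1/2}\bigr)\,|\det\diff_x\xi|$.

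Taking the expectation over $\fka$ then reduces to integrating over the independent scalars $g_k$; using $\bbE|g_k|=\sqrt{2/\pi}$ I obtain
\[
\bbE_{\fka}\left|\det\begin{pmatrix}\fka_1^T\diff_x\varphi_1\\\vdots\\\fka_n^T\diff_x\varphi_n\end{pmatrix}\right| = \left(\frac{2}{\pi}\right)^{n/2}\frac{|\det\diff_x\xi|}{\prod_{k=1}^n\sqrt{1+\xi_k(x)^2}} .
\]
Finally I substitute this, together with $\prod_{k=1}^n\|\varphi_k(x)\|=\prod_{k=1}^n\sqrt{1+\xi_k(x)^2}$, into \eqref{eq:intprobformula_phiform}: the numerical prefactor collapses via $(2\pi)^{-n/2}(2/\pi)^{n/2}=\pi^{-n}$, and the two products of $\sqrt{1+\xi_k(x)^2}$ combine into $\prod_{k=1}^n(1+\xi_k(x)^2)$ in the denominator, which is exactly \eqref{eq:intprobformula_toruscase}. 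There is no genuine obstacle in this argument; the only two points that require minor care are choosing the explicit unit generator of the line $\varphi_k(x)^\perp$ so that the Jacobian $\diff_x\xi$ factors cleanly out of the determinant, and bookkeeping the constants $\sqrt{2/\pi}$ and $(2\pi)^{-n/2}$ so that they telescope to $\pi^{-n}$.
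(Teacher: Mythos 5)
Your proposal is correct and follows essentially the same route as the paper: specialize Corollary~\ref{cor:intprobformula_phiform}, parametrize the line $\varphi_k(x)^\perp$ by a unit vector so that $\fka_k^T\diff_x\varphi_k=\frac{g_k}{\sqrt{1+\xi_k(x)^2}}\diff_x\xi_k$, use $\bbE|g_k|=\sqrt{2/\pi}$, and collapse the constants to $\pi^{-n}$. Your bookkeeping of the factors $\sqrt{1+\xi_k(x)^2}$ is in fact slightly cleaner than the paper's own write-up, which contains small typos in these expressions.
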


\begin{proof}[Proof of Theorem~\ref{theo:intprobformula}]
We will show later that without loss of generality  we can assume that $\psi:\Omega\rightarrow \prod_{k=1}^n\bbS^{m_k}$ is an \emph{embedding}, i.e., it satisfies:
\begin{enumerate}
    \item[(I1)] $\psi$ is an \emph{immersion}, i.e., for all $x\in \Omega$, $\mathrm{rank}\,\diff_x\psi=n$.
    \item[(I2)] $\psi$ is injective.
\end{enumerate}
Moreover, we can further assume that
\begin{enumerate}
    \item[(O)] $\tilde{\Omega}=\Omega$. 
\end{enumerate}
Under these assumptions, $\psi(\Omega)$ is an open embedded submanifold of $\prod_{k=1}^n\bbS^{m_k}$. Now, consider the random hyperplanes
\[\fkH_k:=\left\{x\in\bbR^{m_k+1}\,\left|\, \sum_{i=0}^{m_{k}}\fkc_{k,i}x_{k,i} = 0\right.\right\}\]
which are uniformly distributed in the corresponding Grassmannian (because the $\fkc_{k,i}$ are i.i.d. standard Gaussians). Observe that
\begin{equation}\label{eq:lineq}
  \#\mcZ(\fkf,\Omega)=\# \psi(\Omega)\cap \left(\fkH_1\times\cdots\times\fkH_n\right),  
\end{equation}
since $\psi(x)\in \fkH_1\times\cdots\times\fkH_n$ if and only if $x\in\mcZ(\fkf,\Omega)$. Moreover, by Sard's Theorem~\cite[\S 2]{milnor1997} (cf. \cite[Proposition A.18]{conditionbook}), the intersection
\[ \psi(\Omega) \cap \left(\fkH_1\times\cdots\times\fkH_n\right)\]
is transversal with probability one, and hence it is zero-dimensional almost surely. Hence, with probability one,
\[
\#\mcZ_r(\fkf,\Omega)=\#\mcZ(\fkf,\Omega)=\# \psi(\Omega) \cap \left(\fkH_1\times\cdots\times\fkH_n\right),
\]
and so
\[
\bbE_{\fkc}\#\mcZ_r(\fkf,\Omega)=\bbE_{\fkH_1,\ldots,\fkH_n}\# \psi(\Omega) \cap \left(\fkH_1\times\cdots\times\fkH_n\right).
\]
Now, by the kinematic formula stated in \cite[Theorem 3.2]{burgisser2023} (cf. \cite{burgisserlerario2020,howard1993}) and \cite[Lemma 3.4.]{burgisser2023}, the equation \eqref{eq:intprobformula} holds for $\tilde{\Omega}=\Omega$.

We now show that we can assume (I1), (I2) and (O) without loss of generality. If (I1) does not hold, then the set 
\[V:=\{x\in\Omega\mid \mathrm{rank}\,\diff_x\psi<n\}\]
is non-empty. Now for a given $x\in V$, note that 
\begin{equation}\label{eq:derivativepsik}
    \diff_x\psi_k=\frac{1}{\|\varphi_k(x)\|}(\bbI-\psi_k(x)\psi_k(x)^T)\diff_x\varphi_k.
\end{equation}
So, to have $\rank\diff_x \psi < n$ we have to have a $v_x\in \bbR^n\setminus 0$  in the kernel of $\diff_x \psi$ for which we have that for all $k$
\[\diff_x\varphi_k(v_x)\in \bbR \varphi_k(x) .\]
That is there exist $v_x\in \bbR^n\setminus 0$ and $t_1,\ldots,t_n\in\bbR$ such that for all $k$,
\[\diff_x\varphi_k(v_x) = t_k \varphi_k(x).\]
Translating this into $\diff_x \fkf$ we conclude that there are $t_k\in \bbR$ that satisfies
\[
\diff_x\fkf (v_x)
=\begin{pmatrix}
    \fkc_1^T\diff_x \varphi_1(v_x)\\
    \vdots\\
    \fkc_n^T\diff_x \varphi_n(v_x)
\end{pmatrix}
=\begin{pmatrix}
    t_1\fkc_1^T\varphi_1(x)\\
    \vdots\\
    t_n\fkc_n^T\varphi_n(x)
\end{pmatrix}
=\begin{pmatrix}
    t_1\fkf_1(x)\\
    \vdots\\
    t_n\fkf_n(x)
\end{pmatrix}.
\]
This means $\fkf$ cannot have regular zeros at $x$ for any $ x \in V$. Hence, 
\[ \#\mcZ_r(\fkf,V)=0 \; \text{and} \; \#\mcZ_r(\fkf,\tilde{\Omega})=\#\mcZ_r(\fkf,\tilde{\Omega}\setminus V) . \]
Moreover, since for all $x\in V$, $\mathrm{rank}\,\diff_x\psi<n$, we have that for all $a_1\in \bbR^{m_1+1},\ldots,a_n\in \bbR^{m_n+1}$,
\[
\det\begin{pmatrix}
    a_1^T\diff_x\psi_1\\
    \vdots\\
    a_n^T\diff_x\psi_n
\end{pmatrix}=0
\]
and so 
\[
\int_{\tilde{\Omega}}\,\bbE_{\fka}\left|\det\begin{pmatrix}
    \fka_1^T\diff_x\psi_1\\
    \vdots\\
    \fka_n^T\diff_x\psi_n
\end{pmatrix}\right|\,\mathrm{d}x=\int_{\tilde{\Omega}\setminus V}\,\bbE_{\fka}\left|\det\begin{pmatrix}
    \fka_1^T\diff_x\psi_1\\
    \vdots\\
    \fka_n^T\diff_x\psi_n
\end{pmatrix}\right|\,\mathrm{d}x.
\]
Hence, taking $\Omega\setminus V$ instead of $\Omega$, we can assume, without loss of generality, that (I1) holds.

Assume now that (I1) holds. We will show how we can assume (I2) and (O) without loss of generality. We will make use of Theorem~\ref{theo:measuretheoryargument}, which relies on standard arguments in measure theory and which we prove in Appendix~\ref{sec:meeasures}. By (I1), the map $\psi:\Omega\rightarrow \bbS^n$ is an immersion and so, by \cite[Proposition 5.22]{lee2013book}, it is a \emph{local embedding}, i.e., for each $x\in \Omega$, there is $r_x>0$ such that $\psi:B(x,r_x)\subseteq \Omega\rightarrow \prod_{k=1}^{n}\bbS^{m_k}$ is injective. Therefore the collection of open sets
\begin{equation}\label{eq:functionalcover}
\mcU:=\{U\subseteq \Omega\mid U\text{ open, }\overline{U}\subseteq \Omega,\,\overline{U}\text{ compact and }\psi_{|U}\text{ injective}\}.
\end{equation}
satisfies that for all $x\in \Omega$, there is $r_x>0$ such that for all $r<r_x$, $B(x,r)\in\mcU$. Hence $\mcU$ is a base for the topology of $\Omega$---assumption (U0) of Theorem~\ref{theo:measuretheoryargument}. Moreover, $\mcU$ is closed under containment of open sets---assumption (U1) of Theorem~\ref{theo:measuretheoryargument}.

Consider the following two Borel measures:
\[
\mu:B\mapsto \bbE_{\fkc}\#\mcZ_r(\fkf,B)
\]
and
\[
\nu:B\mapsto (2\pi)^{-n/2}\int_{B}\,\bbE_{\fka}\left|\det\begin{pmatrix}
    \fka_1^T\diff_x\psi_1\\
    \vdots\\
    \fka_n^T\diff_x\psi_n
\end{pmatrix}\right|\,\mathrm{d}x.
\]
We only need to show that $\mu=\nu$. Once this is done, \eqref{eq:intprobformula} holds in the desired generality.

For each $U\in \mcU$, (I2) holds. Thus we have that \eqref{eq:intprobformula} holds with $\tilde{\Omega}=U$, and so
\[
\mu(U)=\nu(U)
\]
---assumption (U4) of Theorem~\ref{theo:measuretheoryargument}.
Moreover, this quantity is finite---assumption (U3) of Theorem~\ref{theo:measuretheoryargument}. To see this, note that 
\[
\bbE_{\fka}\left|\det\begin{pmatrix}
    \fka_1^T\diff_x\psi_1\\
    \vdots\\
    \fka_n^T\diff_x\psi_n
\end{pmatrix}\right|\leq \sqrt{\bbE_{\fka}\prod_{k=1}^n \|\fka_k\|^2}\sqrt{\bbE_{\fka}\left|\det\begin{pmatrix}
    \hat{\fka}_1^T\diff_x\psi_1\\
    \vdots\\
    \hat{\fka}_n^T\diff_x\psi_n
\end{pmatrix}\right|^2}
\]
where $\hat{\fka}_k:=\fka_k/\|\fka_k\|$ is uniformly distributed in $\bbS^{m_k}\cap \psi_k(x)^\perp$., and so, by compactness, for $x\in U$,
\[
\bbE_{\fka}\left|\det\begin{pmatrix}
    \fka_1^T\diff_x\psi_1\\
    \vdots\\
    \fka_n^T\diff_x\psi_n
\end{pmatrix}\right|\leq \sqrt{\bbE_{\fka}\prod_{k=1}^n \|\fka_k\|^2}\max_{\substack{\hat{a}_k\in\bbS^{m_k}\\x\in\overline{U}}}\left|\det\begin{pmatrix}
    \hat{a}_1^T\diff_x\psi_1\\
    \vdots\\
    \hat{a}_n^T\diff_x\psi_n
\end{pmatrix}\right|<\infty.
\]
Thus the integral of the left-hand side over $U\subset \overline{U}$, and so $\nu(U)$, must be finite.

Now, $\Omega$ is $\sigma$-compact, because it is an open subset of $\bbR^n$ and all hypothesis of Theorem~\ref{theo:measuretheoryargument} ((U0), (U1), (U2) and (U3)) are satisfied, so $\mu=\nu$ as we wanted to show.
\end{proof}
\begin{remark}
The above proof can be applied to extend \cite{BETC-fewnomials} to functions that are not semialgebraic and this immediately extends the result in \cite{BETC-fewnomials} to arbitrary real exponents. 
\end{remark}
\begin{proof}[Proof of Corollary~\ref{cor:intprobformula_phiform}]
This follows from Theorem~\ref{theo:intprobformula} using \eqref{eq:derivativepsik}, the properties of the determinant and that $\fka_k(\bbI-\psi_k(x)\psi_k(x)^T)=\fka_k$ due to $\fka_k\in\psi_k(x)^\perp=\varphi_k(x)^\perp$.
\end{proof}
\begin{proof}[Proof of Corollary~\ref{cor:intprobformula_toruscase}]
We just have to apply Corollary~\ref{cor:intprobformula_phiform}. Fix $x\in \Omega$, we only have to show that
\[(2\pi)^{-\frac{n}{2}}\bbE_{\fka}\left|\det\begin{pmatrix}
    \fka_1^T\diff_x\varphi_1\\
    \vdots\\
    \fka_n^T\diff_x\varphi_n
\end{pmatrix}\right|=\frac{1}{\pi^n}\frac{|\det\diff_x\xi|}{\prod_{k=1}^n\sqrt{1+\xi_k(x)^2)}}\]
where the $\fka_k\in\psi_k(x)^\perp$ are independent standard Gaussian random vectors.

Since we are in $\bbR^2$, we have that
\[
\varphi_k(x)^\perp=\mathrm{span}\,\begin{pmatrix}
    \frac{1}{\sqrt{1+\xi_k(x)}}\begin{pmatrix}
        -\xi_k(x)\\
        1
    \end{pmatrix}
\end{pmatrix}
\]
and so we can write
\[
\fka_k=\frac{\fkz_k}{\sqrt{1+\xi_k(x)}}\begin{pmatrix}
        -\xi_k(x)\\
        1
    \end{pmatrix}
\]
with the $\fkz_k\in\bbR$ i.i.d. standard Gaussians. Moreover, because of this, $\fka_k\diff_x\varphi_k=\frac{\fkz_k}{\sqrt{1+\xi_k(x)^2}}\diff_x\xi_k$, and so
\[
\left|\det\begin{pmatrix}
    \fka_1^T\diff_x\varphi_1\\
    \vdots\\
    \fka_n^T\diff_x\varphi_n
\end{pmatrix}\right|=
\prod_{k=1}^n|\fkz_k|\,\frac{|\det\diff_x\xi|}{\prod_{k=1}^n\sqrt{1+\xi_k(x)^2}}.
\]
Now, an easy computation shows that $\bbE_{\fkz}\prod_{k=1}^n|\fkz_k|=(2/\pi)^{n/2}$, and the claim follows.
\end{proof}

We finish this section with the following two propositions which will allow us to drop the regularity assumption from our theorems.

\begin{prop}\label{prop:avoidregularity}
Under the same assumptions and notations of Theorem~\ref{theo:intprobformula}, if for all $x\in \Omega$,
\[
\mathrm{rank}\,\diff_x\psi=n,
\]
then, for every Borel measurable set $\tilde{\Omega}\subseteq \Omega$,
\[
\mcZ(\fkf,\tilde{\Omega})=\mcZ_r(\fkf,\tilde{\Omega})
\]
with probability one.
\end{prop}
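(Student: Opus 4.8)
The plan is to upgrade the transversality observation made inside the proof of Theorem~\ref{theo:intprobformula}---there, $\mathrm{rank}\,\diff_x\psi<n$ forces a non-regular zero---into an honest \emph{equivalence} between regularity of a zero of $\fkf$ and transversality of the associated intersection, and then to reuse the Sard-type argument from that proof. First I would reduce to an embedded picture. The hypothesis says $\psi$ is an immersion, hence a local embedding by \cite[Proposition~5.22]{lee2013book}; since $\Omega$ is second countable (being an open subset of $\bbR^n$), cover it by countably many open sets $U_1,U_2,\dots$ on which $\psi$ is an embedding, so that each $M_j:=\psi(U_j)$ is an $n$-dimensional embedded submanifold of $\prod_{k=1}^n\bbS^{m_k}$. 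Since $\tilde\Omega\subseteq\bigcup_j U_j$ and a countable intersection of probability-one events has probability one, it suffices to prove $\mcZ(\fkf,U_j)=\mcZ_r(\fkf,U_j)$ with probability one for each fixed $j$.

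Fix $j$. As around \eqref{eq:lineq}, $x\in\mcZ(\fkf,U_j)$ if and only if $\psi(x)\in\fkH_1\times\cdots\times\fkH_n$, where $\fkH_k:=\fkc_k^\perp\subseteq\bbR^{m_k+1}$. At such an $x$ we have $\fkc_k\perp\psi_k(x)$, so combining $\diff_x\fkf_k=\fkc_k^T\diff_x\varphi_k$ with \eqref{eq:derivativepsik} one checks that $\diff_x\fkf_k=\|\varphi_k(x)\|\,\fkc_k^T\diff_x\psi_k$, whence
\[
\det\diff_x\fkf=\left(\prod_{k=1}^n\|\varphi_k(x)\|\right)\det\begin{pmatrix}\fkc_1^T\diff_x\psi_1\\\vdots\\\fkc_n^T\diff_x\psi_n\end{pmatrix}.
\]
As $\varphi_k(x)\neq 0$, the zero $x$ is regular exactly when the $n\times n$ matrix $(\fkc_k^T\diff_x\psi_k)_{k}$ is nonsingular, i.e.\ exactly when there is no $u\in\bbR^n\setminus\{0\}$ with $\fkc_k^T\diff_x\psi_k(u)=0$ for every $k$. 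Since $\diff_x\psi_k(u)\in T_{\psi_k(x)}\bbS^{m_k}=\psi_k(x)^\perp$ always, this condition says precisely that $\diff_x\psi(u)$ lies in $T_{\psi(x)}\big(\prod_k(\fkH_k\cap\bbS^{m_k})\big)$; as $\psi|_{U_j}$ is an embedding, $\diff_x\psi(\bbR^n)=T_{\psi(x)}M_j$, and the flat $\prod_k(\fkH_k\cap\bbS^{m_k})$ has codimension $n$ in $\prod_k\bbS^{m_k}$. Therefore the displayed condition is exactly that $M_j$ and this flat meet transversally at $\psi(x)$. Consequently $\mcZ(\fkf,U_j)=\mcZ_r(\fkf,U_j)$ if and only if every point of $M_j\cap(\fkH_1\times\cdots\times\fkH_n)$ is a transversal intersection point.

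It remains to show this happens almost surely. Since the $\fkH_k$ are uniformly distributed in their Grassmannians, $\prod_k(\fkH_k\cap\bbS^{m_k})$ is a uniformly random position of a fixed flat, and by Sard's theorem---the same application as in the proof of Theorem~\ref{theo:intprobformula} (cf.\ \cite[\S2]{milnor1997} and \cite[Proposition~A.18]{conditionbook})---the fixed embedded submanifold $M_j$ meets it transversally for almost every position. Hence $\mcZ(\fkf,U_j)=\mcZ_r(\fkf,U_j)$ with probability one, and intersecting over $j$ finishes the proof.

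I expect the only delicate point to be the middle step: the linear-algebra identity $\diff_x\fkf_k=\|\varphi_k(x)\|\,\fkc_k^T\diff_x\psi_k$, valid precisely at a zero, together with the dimension bookkeeping that turns nonsingularity of $(\fkc_k^T\diff_x\psi_k)_k$ into transversality ($\dim M_j=n$ and the flat has codimension $n$, so transversal intersections are isolated, consistent with zeros being isolated). The probabilistic ingredient is then verbatim the one already used for Theorem~\ref{theo:intprobformula}.
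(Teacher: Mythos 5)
Your proof is correct and follows essentially the same route as the paper: cover $\Omega$ by open sets on which the immersion $\psi$ is an embedding, apply the Sard/kinematic transversality argument from the proof of Theorem~\ref{theo:intprobformula} on each piece, and conclude over the cover. The extra details you supply---extracting a countable subcover and verifying via $\diff_x\fkf_k=\|\varphi_k(x)\|\,\fkc_k^T\diff_x\psi_k$ that regularity of a zero is equivalent to transversality of the intersection---are exactly the points the paper leaves implicit, and they check out.
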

\begin{prop}\label{prop:emptyoverdetermined}
Under the same analogous notations of Theorem~\ref{theo:intprobformula}, assume that the map
the map $\psi:\Omega\rightarrow \prod_{k=1}^q \bbS^{m_k}$, with $q>n$, given by
\begin{equation}
    \psi:x\mapsto \begin{pmatrix}\psi_1(x)\\\vdots\\\psi_q(x)\end{pmatrix}:=\begin{pmatrix}\varphi_1(x)/\|\varphi_1(x)\|\\\vdots\\\varphi_q(x)/\|\varphi_q(x)\|\end{pmatrix}
\end{equation}
is well-defined and that for all $x\in \Omega$,
\[
\mathrm{rank}\,\diff_x\psi=n.
\] 
Consider the random overdetermined system $\fkf_1(x)=0,\ldots,\fkf_q(x)=0$ given by
\[\fkf_k:=\sum_{i=0}^{m_k+1}\fkc_{k,i}\varphi_{k,i}\]
where the $\fkc_{k,i}$ are i.i.d. standard Gaussian random variables. Then
\[
\mcZ(\fkf,\Omega)=\varnothing
\]
with probability one.
\end{prop}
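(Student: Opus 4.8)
The plan is to follow the structure of the proof of Theorem~\ref{theo:intprobformula}, but to replace the kinematic count of intersection points by a pure dimension count. By the same localization used there, it is enough to prove $\mcZ(\fkf,U)=\varnothing$ with probability one for every $U$ in the family $\mcU$ of~\eqref{eq:functionalcover}: since $\mcU$ is a base for the topology of $\Omega$ and $\Omega$ is second countable, $\Omega$ is a countable union of such $U$, so $\mcZ(\fkf,\Omega)$ is the corresponding countable union of the $\mcZ(\fkf,U)$, and it suffices to make each of these empty almost surely. Fix such a $U$; then $\psi|_U$ is an embedding, so $\psi(U)$ is an embedded submanifold of $\prod_{k=1}^q\bbS^{m_k}$ of dimension $n$, whereas the ambient manifold has dimension $\sum_{k=1}^q m_k$.

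Next I would introduce the random hyperplanes $\fkH_k:=\{v\in\bbR^{m_k+1}\mid \sum_{i=0}^{m_k}\fkc_{k,i}v_i=0\}$; since the $\fkc_{k,i}$ are i.i.d.\ standard Gaussian, the $\fkH_k$ are independent and uniformly distributed in the respective Grassmannians, and with probability one each $\fkH_k\cap\bbS^{m_k}$ is a great subsphere of codimension $1$ in $\bbS^{m_k}$. As in~\eqref{eq:lineq}, $\psi(x)$ lies in $S:=\prod_{k=1}^q(\fkH_k\cap\bbS^{m_k})$ exactly when $x\in\mcZ(\fkf,U)$, so $\mcZ(\fkf,U)\neq\varnothing$ if and only if $\psi(U)\cap S\neq\varnothing$. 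With probability one, $S$ is an embedded submanifold of $\prod_{k=1}^q\bbS^{m_k}$ of codimension $q$, i.e.\ of dimension $\sum_{k=1}^q m_k-q$.

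Then I would invoke transversality: by the kinematic formula of~\cite[Theorem~3.2]{burgisser2023} (cf.~\cite{burgisserlerario2020,howard1993}), with probability one the fixed submanifold $\psi(U)$ meets $S$ transversally inside $\prod_{k=1}^q\bbS^{m_k}$. But if there were a point $p\in\psi(U)\cap S$, transversality would force $T_p\psi(U)+T_pS=T_p\prod_{k=1}^q\bbS^{m_k}$, hence $n+\sum_{k=1}^q m_k-q\ge\sum_{k=1}^q m_k$, i.e.\ $q\le n$, contradicting $q>n$. Therefore $\psi(U)\cap S=\varnothing$, hence $\mcZ(\fkf,U)=\varnothing$, with probability one; taking the union of these null events over the countable cover yields $\mcZ(\fkf,\Omega)=\varnothing$ with probability one.

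The step I expect to be the crux is the genericity/transversality claim — that for almost every choice of $(\fkH_1,\dots,\fkH_q)$ the intersection with the \emph{fixed} submanifold $\psi(U)$ is transversal (equivalently, here, empty). Besides quoting the kinematic formula already used in the paper, one can argue this by hand: form the incidence manifold $I:=\{(x,u_1,\dots,u_q)\in U\times\prod_{k=1}^q\bbS^{m_k}\mid \langle\psi_k(x),u_k\rangle=0\text{ for all }k\}$, observe that $0$ is a regular value of the map $(x,u)\mapsto(\langle\psi_k(x),u_k\rangle)_{k}$ because already the derivative in $u_k$ alone surjects onto $\bbR$ (as $\|\psi_k(x)\|=1$), so $I$ is a manifold of dimension $n+\sum_{k=1}^q m_k-q<\sum_{k=1}^q m_k$, and conclude by Sard's theorem that the image of the projection $I\to\prod_{k=1}^q\bbS^{m_k}$ has measure zero; since $(\fkc_1/\|\fkc_1\|,\dots,\fkc_q/\|\fkc_q\|)$ is uniformly distributed on $\prod_{k=1}^q\bbS^{m_k}$, it lands outside this null set almost surely. (This last argument in fact does not use the rank hypothesis on $\diff_x\psi$; that hypothesis is retained only for parallelism with Proposition~\ref{prop:avoidregularity}.)
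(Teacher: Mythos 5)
Your proof is correct and follows essentially the same route as the paper's: localize to the cover $\mcU$, identify $\mcZ(\fkf,U)$ with the intersection of the $n$-dimensional embedded piece $\psi(U)$ with the codimension-$q$ product of random great subspheres, and observe that almost-sure transversality (Sard / parametric transversality, which is what the paper cites for this step rather than the kinematic formula itself) together with $q>n$ forces the intersection to be empty. Your incidence-manifold argument is just the standard proof of that transversality statement written out, and your closing observation is a genuine small bonus: that argument needs neither the rank hypothesis on $\diff_x\psi$ nor the localization to $\mcU$, since regularity of the incidence set already comes from the derivatives in the sphere variables alone.
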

\begin{proof}[Proof of Proposition~\ref{prop:avoidregularity}]
The proof is as the proof of Theorem~\ref{theo:intprobformula}, but now we don't need to remove any subset from $\Omega$ so that that (I1) holds. Hence, arguing as before using~\eqref{eq:lineq} with the cover $\mcU$, we have, by Sard's Theorem~\cite[\S 2]{milnor1997} (cf. \cite[Proposition A.18]{conditionbook}), that for all $U\in \mcU$,
\[
\mcZ(\fkf,U)=\mcZ_r(\fkf,U)
\]
with probability one. Since this holds for all open sets in an open cover of $\Omega$, it holds for $\Omega$ too. 
\end{proof}
\begin{proof}[Proof of Proposition~\ref{prop:emptyoverdetermined}]
The proof is analogue to that of Proposition~\ref{prop:avoidregularity}. However, when writing down the analogous of \eqref{eq:lineq}, we have that we are intersecting $q>n$ hyperplanes with an $n$-dimensional submanifold in $\prod_{k=1}^q \bbS^{m_k}$. By Sard's Theorem~\cite[\S 2]{milnor1997} (cf. \cite[Proposition A.18]{conditionbook}), this intersection is almost surely transversal which implies that this intersection is empty  as desired.
\end{proof}

\section{Proof of the Main Theorem}\label{sec:proofMainResult}

We  now prove Theorem~\ref{theo:maintheorem_fulldetails} which yields Theorems~\ref{theo:MainTheoremNumberZeros} and~\ref{theo:MainTheoremNumberZeros_variances} as direct corollaries. We also prove Proposition~\ref{prop:injectivity}, which allows us to remove the regularity from our statements for random fewnomial systems; and the key Proposition~\ref{prop:boundbinomialsystem} that plays a central role in the proof of the main theorem.

\begin{prop}\label{prop:injectivity}
Under the notations and assumptions of Theorem~\ref{theo:maintheorem_fulldetails}, 
\[
\mcZ(\fkf,\bbR^n_+)=\mcZ_r(\fkf,\bbR_+^n)
\]
with probability one.
\end{prop}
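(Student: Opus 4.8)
The plan is to reduce, via logarithmic coordinates, to the setting of Theorem~\ref{theo:intprobformula}, and then invoke Proposition~\ref{prop:avoidregularity} in the nondegenerate case and Proposition~\ref{prop:emptyoverdetermined} in the degenerate one. First I would discard every term with $\var_{k,\alpha}=0$ (these vanish almost surely and do not affect either zero set), so that we may assume $\var_{k,\alpha}>0$ throughout, and then pass through the diffeomorphism $\Phi\colon\bbR^n\to\bbR^n_+$, $\Phi(y)=(e^{y_1},\dots,e^{y_n})$. Since $\Phi$ is a diffeomorphism, it carries $\mcZ(\fkf,\bbR^n_+)$ and $\mcZ_r(\fkf,\bbR^n_+)$ bijectively onto $\mcZ(\fkf\circ\Phi,\bbR^n)$ and $\mcZ_r(\fkf\circ\Phi,\bbR^n)$, so it suffices to work with $\fkf_k\circ\Phi(y)=\sum_{\alpha\in A_k}\fkf_{k,\alpha}e^{\langle\alpha,y\rangle}$ on $\Omega=\bbR^n$. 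Writing $\fkf_{k,\alpha}=\sqrt{\var_{k,\alpha}}\,\fkc_{k,\alpha}$ with the $\fkc_{k,\alpha}$ i.i.d.\ standard Gaussian and setting $\varphi_{k,\alpha}(y):=\sqrt{\var_{k,\alpha}}\,e^{\langle\alpha,y\rangle}$, this is precisely the random system of Theorem~\ref{theo:intprobformula}; note $\|\varphi_k(y)\|>0$ everywhere, so $\psi$ is well defined.

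The key computation I would carry out is to locate where $\diff_y\psi$ drops rank. By~\eqref{eq:derivativepsik}, a vector $v\in\bbR^n$ lies in $\ker\diff_y\psi$ iff $\diff_y\varphi_k(v)\in\bbR\,\varphi_k(y)$ for all $k$; since the $\alpha$-entry of $\diff_y\varphi_k(v)$ equals $\langle\alpha,v\rangle$ times the $\alpha$-entry of $\varphi_k(y)$, this holds iff $\alpha\mapsto\langle\alpha,v\rangle$ is constant on each $A_k$, that is, iff $v\perp W$, where
\[
W:=\operatorname{span}_{\bbR}\Big(\textstyle\bigcup_{k=1}^{n}(A_k-A_k)\Big).
\]
Crucially, this criterion is independent of $y$: either $\mathrm{rank}\,\diff_y\psi=n$ for every $y$ (when $W=\bbR^n$), or $\mathrm{rank}\,\diff_y\psi=\dim W<n$ for every $y$.

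In the first case the hypothesis of Proposition~\ref{prop:avoidregularity} holds, and it immediately gives $\mcZ(\fkf\circ\Phi,\bbR^n)=\mcZ_r(\fkf\circ\Phi,\bbR^n)$ almost surely, hence the claim. In the second case, set $d:=\dim W<n$ and split $\bbR^n=W\oplus W^\perp$, $y=(w,u)$. For $u\in W^\perp$ and $\alpha\in A_k$ the quantity $\langle\alpha,u\rangle$ depends only on $k$ (because $A_k-A_k\subseteq W$), so $\fkf_k\circ\Phi(y)=e^{c_k(u)}\,\bar\fkf_k(w)$ for a linear function $c_k$ and a random fewnomial $\bar\fkf_k$ in the variable $w\in W\cong\bbR^d$; consequently $\mcZ(\fkf\circ\Phi,\bbR^n)=\mcZ(\bar\fkf,W)\times W^\perp$, which is empty exactly when $\mcZ(\bar\fkf,W)$ is. Now $\bar\fkf=(\bar\fkf_1,\dots,\bar\fkf_n)$ is overdetermined ($n>d$ equations on $\bbR^d$), and the differences of its supports still span $W$, so the computation of the previous paragraph applied on $W$ shows the associated map $\bar\psi$ has $\mathrm{rank}\,\diff_w\bar\psi=d$ for all $w$. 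Proposition~\ref{prop:emptyoverdetermined} then yields $\mcZ(\bar\fkf,W)=\varnothing$ almost surely, whence $\mcZ(\fkf,\bbR^n_+)=\varnothing=\mcZ_r(\fkf,\bbR^n_+)$ almost surely. (As a byproduct this also reproves the degenerate part of the Remark following Theorem~\ref{theo:MainTheoremNumberZeros}.)

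I expect the only genuinely delicate step to be the degenerate case: one must verify that after quotienting out $W^\perp$ the reduced system really does fit the hypotheses of Proposition~\ref{prop:emptyoverdetermined} — in particular that $\bar\psi$ has full rank $d$ at every point, and that passing to the quotient does not alter the zero set — whereas the nondegenerate case is an immediate application of Proposition~\ref{prop:avoidregularity}. Everything else is routine bookkeeping with the logarithmic substitution and with formula~\eqref{eq:derivativepsik}.
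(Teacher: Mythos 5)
Your proof is correct and follows essentially the same route as the paper's: both pass to exponential sums, use \eqref{eq:derivativepsik} to show that $\diff\psi$ has full rank everywhere exactly when the differences of the supports span $\bbR^n$ (equivalently, the affine span of $\sum_k A_k$ is all of $\bbR^n$), and then invoke Proposition~\ref{prop:avoidregularity} in that case and Proposition~\ref{prop:emptyoverdetermined} in the degenerate case. The only cosmetic difference is that you split $\bbR^n=W\oplus W^\perp$ orthogonally and project the supports, where the paper performs a monomial change of variables to move the affine span onto a coordinate subspace $\bbR^m\times 0$.
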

\begin{prop}\label{prop:boundbinomialsystem}
Let $\gamma_1,\ldots,\gamma_n\in\bbR^n$, $s_1,\ldots,s_n\in\bbR$ and consider the following the random binomial system $\fkh$ given by the equations
\begin{equation}\label{eq:specialsystem}
\left.
    \begin{array}{rl}
        \fkh_1&=\fka_1\exp(\gamma_1^tX+s_1)+\fkb_1\\
        &\,\vdots\\
        \fkh_{n}&=\fka_n\exp(\gamma_n^tX+s_n)+\fkb_n\\
    \end{array}
\right.
\end{equation}
where the $\fka_k$ and $\fkb_k$ are i.i.d. standard Gaussian random variables, and the following Borel-measurable set
\[
B:=\{x\in\bbR^n\mid \text{for all }k,\,\gamma_k^Tx+s_k\leq 0\}.
\]
Then
\[
\frac{1}{\pi^n}\int_B\,\frac{\prod_{k=1}^n\exp(\gamma_k^Tx+s_k)}{\prod_{k=1}^n(1+\exp(2\gamma_k^Tx+2s_k))}\left|\det(\gamma_1\,\cdots\,\gamma_n)\right|\,\mathrm{d}x=\bbE_\fkh \#\mcZ_r(\fkh,B)\leq \frac{1}{4^n}.
\]
\end{prop}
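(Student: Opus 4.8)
The plan is to read off the claimed identity from the torus-case Rice formula of Section~\ref{sec:riceformula}, and then evaluate the resulting quantity by two independent routes. For the identity, apply Corollary~\ref{cor:intprobformula_toruscase} on $\Omega=\bbR^n$ with $\xi_k(x):=\exp(\gamma_k^Tx+s_k)$, so that $\fkh_k=\fkb_k\cdot 1+\fka_k\cdot\xi_k$ has the required form with $\varphi_k=(1,\xi_k(x))^T$ and i.i.d.\ standard Gaussian coefficient vector $(\fkb_k,\fka_k)$. Since $\diff_x\xi_k=\xi_k(x)\,\gamma_k^T$, the Jacobian $\diff_x\xi$ equals $\mathrm{diag}(\xi_1(x),\dots,\xi_n(x))$ times the matrix with rows $\gamma_k^T$, hence $|\det\diff_x\xi|=\big(\prod_{k}\xi_k(x)\big)\,|\det(\gamma_1\,\cdots\,\gamma_n)|$; substituting into \eqref{eq:intprobformula_toruscase} with the Borel set $\tilde\Omega=B$ gives exactly that $\bbE_\fkh\#\mcZ_r(\fkh,B)$ equals the displayed integral. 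It remains to bound this quantity by $4^{-n}$.

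If $\det(\gamma_1\,\cdots\,\gamma_n)=0$ both sides vanish, so assume $\Gamma:=(\gamma_1\,\cdots\,\gamma_n)^T$ is invertible. The probabilistic route: $x$ is a zero of $\fkh$ iff $\exp(\gamma_k^Tx+s_k)=-\fkb_k/\fka_k$ for all $k$, which is solvable exactly when $-\fkb_k/\fka_k>0$ for all $k$, in which case the solution is the unique point $x^\ast$ with $\gamma_k^Tx^\ast+s_k=\ln(-\fkb_k/\fka_k)$, and it is a regular zero almost surely (the Jacobian there is $\mathrm{diag}(-\fkb_1,\dots,-\fkb_n)\,\Gamma$). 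Since $x^\ast\in B$ iff $0<-\fkb_k/\fka_k\le 1$ for all $k$ and the boundary event $|\fkb_k|=|\fka_k|$ is null, independence across $k$ yields
\[
\bbE_\fkh\#\mcZ_r(\fkh,B)=\prod_{k=1}^n\bbP\big(\fka_k\fkb_k<0,\ |\fkb_k|\le|\fka_k|\big).
\]
For one pair of i.i.d.\ standard Gaussians, $(\sgn\fka_k,\sgn\fkb_k)$ is independent of $(|\fka_k|,|\fkb_k|)$, $\bbP(\fka_k\fkb_k<0)=\tfrac12$, and $\bbP(|\fkb_k|\le|\fka_k|)=\tfrac12$ by exchangeability, so each factor equals $\tfrac14$ and the product is $4^{-n}$.

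The analytic route: with $\Gamma$ invertible, the change of variables $u=\Gamma x+s$ satisfies $|\det(\gamma_1\,\cdots\,\gamma_n)|\,\mathrm{d}x=\mathrm{d}u$ and carries $B$ onto $(-\infty,0]^n$, so the integral becomes $\prod_{k=1}^n\frac1\pi\int_{-\infty}^0\frac{e^{u_k}}{1+e^{2u_k}}\,\mathrm{d}u_k$; the substitution $v=e^{u_k}$ turns each factor into $\frac1\pi\int_0^1\frac{\mathrm{d}v}{1+v^2}=\frac1\pi\cdot\frac\pi4=\frac14$.

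There is no substantial obstacle: everything rests on having the Rice formula of Section~\ref{sec:riceformula} available in the needed generality (in particular for the non-open Borel domain $B$) and on the elementary observation that a random binomial system has at most one zero, which is almost surely regular. The only points requiring mild care are the degenerate case $\det(\gamma_1\,\cdots\,\gamma_n)=0$ (both sides are zero) and checking that replacing $\le$ by $<$ in the definition of $B$ is immaterial because the relevant boundary is null.
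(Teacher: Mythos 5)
Your proposal is correct and follows essentially the same path as the paper, which in fact gives both of your routes: the identity via Corollary~\ref{cor:intprobformula_toruscase} with $\xi_k(x)=\exp(\gamma_k^Tx+s_k)$, a ``geometric'' proof observing that the binomial system has at most one zero lying in $B$ exactly when each $-\fkb_k/\fka_k\in(0,1]$ (probability $4^{-n}$), and an ``analytical'' proof via the same change of variables reducing to $\frac{1}{\pi}\int_0^1\frac{dv}{1+v^2}=\frac14$. Your added care about the degenerate case $\det(\gamma_1\,\cdots\,\gamma_n)=0$ and the almost-sure regularity of the unique zero matches the paper's treatment.
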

\begin{remark}
If the $\gamma_k$ are linearly independent, then the inequality in Proposition~\ref{prop:boundbinomialsystem} is an equality.
\end{remark}

\begin{proof}[Proof of Theorem~\ref{theo:maintheorem_fulldetails}]
By Proposition~\ref{prop:injectivity}, we can just bound the number of expected regular zeros. 
Instead of a system of fewnomials, we consider the equivalent system of exponential sums $\fkg=(\fkg_1,\ldots,\fkg_n)$ defined as
\begin{equation}
    \begin{array}{rl}
        \fkg_1&=\sum_{\alpha\in A_1}\fkc_{1,\alpha}\exp(\alpha^TX+\liftfunct_1(\alpha))\\
        &\,\vdots\\
        \fkg_n&=\sum_{\alpha\in A_n}\fkc_{n,\alpha}\exp(\alpha^TX+\liftfunct_n(\alpha))\\
    \end{array}
\end{equation}
where the $\fkc_{k,\alpha}$ are i.i.d. standard Gaussian variables and $\liftfunct_k:A_k\rightarrow \bbR$ is the lifting function defined in \eqref{eq:liftingfunction}, for which $\exp(\liftfunct_k(\alpha))$ equals the square root of the variance of $\fkf_{k,\alpha}$. Observe that 
\[
\bbE_{\fkc}\#\mcZ_r(\fkg,\bbR^n)=\bbE_{\fkf}\#\mcZ_r(\fkf,\bbR^n_+) .
\]
So we will need to bound the expected number zeros of $\fkg$ on $\mathbb{R}^n$. We do this by decomposing  $\mathbb{R}^n$ into a collection of polyhedral cells, and bounding the number the number zeros on each of these polyhedral cells. 

To define the polyhedral cell, for $\alpha_1\in A_1,\ldots,\alpha_n\in A_n$, consider 
\begin{equation}
M(\alpha_1,\ldots,\alpha_n):=\{x\in\bbR^n\mid  \; \text{for all }k \; \text{and for all } \alpha\in A_k,\, 
 \alpha^Tx +\liftfunct_k(\alpha) \leq  \alpha_k^T x +\liftfunct_k(\alpha_k) 
  \}
\end{equation}
Observe that $z \in M(\alpha_1,\ldots,\alpha_n)$ if and only if the map 
\[
\begin{pmatrix}s\\x\end{pmatrix}\mapsto \begin{pmatrix}1\\z\end{pmatrix}^T\begin{pmatrix}s\\x\end{pmatrix}=s+z^tx
\]
attains, for each $k$, its maximum value inside 
$\mcL(A_k,\liftfunct_k)$
at
\[
\begin{pmatrix}\liftfunct_k(\alpha_k)\\\alpha_k\end{pmatrix},
\]
i.e.,  
\[
\text{ if }\begin{pmatrix}1\\z\end{pmatrix}
\text{ is, for each }k\text{, in the normal cone of }\mcL_k(A_k,\liftfunct_k)\text{ at }\begin{pmatrix}\liftfunct_k(\alpha_k)\\\alpha_k\end{pmatrix}.
\]
This shows that $M(\alpha_1,\ldots,\alpha_n)$ is a full-dimensional polyhedral cell if and only if 
\[
\sum_{k=1}^n\begin{pmatrix} \liftfunct_k(\alpha_k)\\\alpha_k\end{pmatrix}
\]
is a vertex of the Minkowski sum $\sum_{k=1}^n\mcL(A_k,\liftfunct_k)$. Moreover, since
$M(\alpha_1,\ldots,\alpha_n)$ is obtained by maximizing the value of linear functional over a finite collection of points, we have
\begin{equation}\label{eq:decompositionMbetas}
   \bbR^n=\bigcup\{M(\alpha_1,\ldots,\alpha_n)\mid \alpha_1\in A_1,\ldots,\alpha_n\in A_n\}. 
\end{equation}
\begin{figure}[bt]
    \centering
    \includegraphics[width=0.8\textwidth]{imgs/Malphasubdiv.png}
    \caption{The collection of full-dimensional $M(\alpha_1,\alpha_2)$ for the $(A_1,\liftfunct_1)$ and $(A_2,\liftfunct_2)$ of Figure~\ref{fig:exam1}.}
    \label{fig:exam2}
\end{figure}
So, by subadditivity of the zero counting function, we only need to prove that for all $\alpha_1\in A_1,\ldots,\alpha_n\in A_n$ the following bound holds
\begin{equation}
   \bbE_{\fkc}\#\mcZ_r(\fkg,M(\alpha_1,\ldots,\alpha_n))\leq \frac{1}{4^n}\prod_{k=1}^n(t_k-1). 
\end{equation}
Observe that the solutions of $\fkg$ in any region do not change if we multiply each equation by $\exp \left( - \alpha_k^Tx -\liftfunct_k(\alpha_k) \right)$. Thus, without loss of generality, we can assume that $\alpha_1=\cdots=\alpha_n=0$ and that $\liftfunct_k(\alpha_k)=0$. Thus by considering $M:=M(0,\ldots,0)$, $ \varphi_k(x)=\left( \exp(\alpha^Tx+\liftfunct_{i}(\alpha)) \right)_{\alpha \in A_k}$, and applying Corollary~\ref{cor:intprobformula_phiform}, we get
\begin{equation}\label{eq:boundexpectationgonM}
    \bbE_{\fkc}\#\mcZ_r(\fkg,M)=(2\pi)^{-\frac{n}{2}}\int_M\,\frac{1}{\prod_{k=1}^n\|\varphi_k(x)\|}\bbE_{\fka}\left|\det\begin{pmatrix}
    \fka_1^T\diff_x\varphi_1\\
    \vdots\\
    \fka_n^T\diff_x\varphi_n
\end{pmatrix}\right|\,\mathrm{d}x
\end{equation}
where the $\fka_k\in\varphi_k(x)^\perp$ are independent standard Gaussian random vectors. Observe that $\diff_x\varphi_{k,0}=0$ since $\varphi_{k,0}=1$. Thus, by the Cauchy-Binet formula \cite[Theorem 4.15]{broidagill1989linearalgebratextbook} applied to
\[
\begin{pmatrix}
    \fka_1^T\diff_x\varphi_1\\
    \vdots\\
    \fka_n^T\diff_x\varphi_n
\end{pmatrix}=\begin{pmatrix}
    \fka_1^T&&\\&\ddots&\\&&\fka_n^T
\end{pmatrix}\diff_x\varphi,
\]
we obtain 
\[
\det\begin{pmatrix}
    \fka_1^T\diff_x\varphi_1\\
    \vdots\\
    \fka_n^T\diff_x\varphi_n
\end{pmatrix}
= \sum_{\beta_1\in A_1\setminus\{0\},\ldots,\beta_n\in A_n\setminus\{0\}}\prod_{k=1}^n\fka_{k,\beta_k}\det\begin{pmatrix}
    \diff_x\varphi_{1,\beta_1}\\
    \vdots\\
    \diff_x\varphi_{n,\beta_n}
\end{pmatrix},
\]
since, to get a full-rank minor of $\mathrm{diag}(\fka^T_k)$, we need to pick a column corresponding to each $\fka_k^T$. So, by the triangle inequality we have
\begin{equation}\label{eq:CBineq}
    \left|\det\begin{pmatrix}
    \fka_1^T\diff_x\varphi_1\\
    \vdots\\
    \fka_n^T\diff_x\varphi_n
\end{pmatrix}\right|
\leq \sum_{\beta_1\in A_1\setminus\{0\},\ldots,\beta_n\in A_n\setminus\{0\}}\prod_{k=1}^n|\fka_{k,\beta_k}|\left|\det\begin{pmatrix}
    \diff_x\varphi_{1,\beta_1}\\
    \vdots\\
    \diff_x\varphi_{n,\beta_n}
\end{pmatrix}\right|.
\end{equation}
Now we use following observation: The coordinate projection
\begin{align*}
    \bbR^{A_k}&\rightarrow\bbR\\
    x&\mapsto x_{\beta_k}
\end{align*}
induces a linear functional $\lambda_{k}:\varphi_k(x)^\perp \rightarrow \bbR$ which has the norm
\begin{equation*}%\label{eq:varianceprojection}
    \|\lambda_{k}\|=\sqrt{1-\frac{\varphi_{k,\beta_k}(x)^2}{\|\varphi_k(x)\|^2}}=\frac{\sqrt{\|\varphi_k(x)\|^2-\varphi_{k,\beta_k}(x)^2}}{\|\varphi_k(x)\|}.
\end{equation*}
Thus $\fka_{k,\beta_k}=\lambda_{\beta_k}(\fka_k)$ is a centered Gaussian random variable with variance $\|\lambda_{k}\|^2$, because it is the result of projecting the standard Gaussian random vector $\fka_k\in\varphi_k(x)^\perp$ with the linear functional $\lambda_{k}$ of norm $\|\lambda_{k}\|$. Therefore
\begin{equation}\label{eq:expak}
    \bbE_{\fka}|\fka_{k,\beta_k}|=\frac{\sqrt{\|\varphi_k(x)\|^2-\varphi_{k,\beta_k}(x)^2}}{\|\varphi_k(x)\|}\sqrt{\frac{2}{\pi}}.
\end{equation}
Substituting \eqref{eq:expak} and $\diff_x\varphi_{k,\beta_k}=\exp(\beta_k^Tx +\liftfunct_k(\beta_k))\beta_k^T$ back in~\eqref{eq:boundexpectationgonM} combined with \eqref{eq:CBineq}, we get
\begin{multline*}
   \bbE_\fkc\#\mcZ_r(\fkg,M)\\\leq \sum_{\substack{\text{for all }k\text{, }\\\beta_k\in A_k\setminus \{0\}}}\frac{1}{\pi^n}\int_M\,\prod_{k=1}^n\frac{\sqrt{\|\varphi_k(x)\|^2-\varphi_{k,\beta_k}(x)^2}\exp(\beta_k^Tx +\liftfunct_k(\beta_k))}{\|\varphi_k(x)\|^2}\,|\det(\beta_1\,\cdots\,\beta_n)|\,\mathrm{d}x. 
\end{multline*}
If we bound each of the summands in the right-hand side by $1/4^n$, we are done, since there are at most $\prod_{k=1}^n(t_k-1)$ many summands.

We will use Proposition~\ref{prop:boundbinomialsystem} with $\gamma_k=\beta_k$ and $s_k=\liftfunct_k(\beta_k)$ to bound each summand by $1/4^n$. To do this, we only need to show that
\begin{equation}\label{eq:desiredineq}
\prod_{k=1}^n\frac{\sqrt{\|\varphi_k(x)\|^2-\varphi_{k,\beta_k}(x)^2}}{\|\varphi_k(x)\|^2}\leq \frac{1}{\prod_{k=1}^n(1+\exp(2 \beta_k^Tx +2 \liftfunct_k(\beta_k)))}.
\end{equation}
So, it suffices to show that for each $k$ we have
\begin{equation}
    \frac{\sqrt{\|\varphi_k(x)\|^2-\varphi_{k,\beta_k}(x)^2}}{\|\varphi_k(x)\|^2}\leq \frac{1}{(1+\exp(2 \beta_k^Tx + 2 \liftfunct_k(\beta_k)))}.
\end{equation}
We define
\[
u_k(x):=\varphi_{k,\beta_k}(x)=\exp( \beta_k^Tx +\liftfunct_k(\beta_k))
\]
and
\[
v_k(x)=\sqrt{\|\varphi_k(x)\|^2-\varphi_{k,\beta_k}(x)^2}=\sqrt{\sum_{\alpha\in A_k\setminus\{0,\beta_k\}}\exp(2 \alpha^Tx +2 \liftfunct_k(\alpha))}.
\]
We only need to show then that
\[
\frac{\sqrt{1+v_k(x)^2}}{1+u_k(x)^2+v_k(x)^2}\leq \frac{1}{1+u_k(x)^2},
\]
which is equivalent to
\[
\left(\sqrt{1+v_k(x)^2}\right)^2-(1+u_k(x)^2)\sqrt{1+v_k(x)^2}+u_k(x)^2\geq 0.
\]
Observe that the polynomial $T^2-(1+u_k(x)^2)T+u_k(x)^2$ has two roots: $1$ and $u_k(x)^2$. Since $\sqrt{1+v_k(x)^2}\geq 1$, the above inequality is valid independently of $v_k(x)$ as long as $u_k(x)\leq 1$, which is precisely what happens for $x \in M$ since
\[
M=M(0,\ldots,0)=\{x\in\bbR^n\mid \text{for all } k \text{, for any }\beta_k \in A_k,\, \beta_k^Tx + \liftfunct_k(\beta_k)\leq 0\}.
\]
Hence, \eqref{eq:desiredineq} holds in $M$, and by Proposition~\ref{prop:boundbinomialsystem},
\[
\frac{1}{\pi^n}\int_M\,\prod_{k=1}^n\frac{\sqrt{\|\varphi_k(x)\|^2-\varphi_{k,\beta_k}(x)^2}\exp( \beta_k^Tx +\liftfunct_k(\beta_k))}{\|\varphi_k(x)\|^2}\,|\det(\beta_1\,\cdots\,\beta_n)|\,\mathrm{d}x\leq \frac{1}{4^n}.
\]
which completes the proof.
\end{proof}

\begin{proof}[Proof of Proposition~\ref{prop:injectivity}]
Assume that the affine span of $\sum_{k=1}^n A_k$ is the whole of $\bbR^n$. By Propositions~\ref{prop:avoidregularity}, we only need to show that for the map
\[ \psi=(\varphi_1/\|\varphi_1\|,\ldots,\varphi_n/\|\varphi_n\|) \] 
where 
\[\varphi_k(x)=(\exp( \alpha^Tx+ \liftfunct_k(\alpha)))_{\alpha\in A_k},\]
satisfies that for all $x\in\bbR^n$, $\mathrm{rank}\,\diff_x\psi=n$. Note that working with this exponential formulation is enough as it does not affect the regularity of the zeros of a fewnomial system.

Now, by \eqref{eq:derivativepsik}, we have that $\mathrm{rank}\,\diff_x\psi<n$ if and only if there is $v_x\in\bbR^n\setminus\{0\}$ such that for all $k$, $\diff_x\varphi(v_x)\in\bbR\varphi(x)$. Thus $\mathrm{rank}\,\diff_x\psi<n$ if and only if there is $v_x\in\bbR^n\setminus\{0\}$ and $t_1,\ldots,t_n\in\bbR^n$ such that for all $k$, $\diff_x\varphi(v_x)=t_k\varphi_k(x)$. For all $k$ and $\alpha_k \in A_k$,
\[
\diff_x\varphi_{k,\alpha_k}=\varphi_{k,\alpha_k}\alpha_k^T.
\]
Hence $\mathrm{rank}\,\diff_x\psi<n$ if and only if there is $v_x\in\bbR^n\setminus\{0\}$ and $t_1,\ldots,t_n\in\bbR^n$ such that for all $k$ and all $\alpha_k\in A_k$,
$
\alpha_k^Tv_x=t_k
$.
In other words, $\mathrm{rank}\,\diff_x\psi<n$ if and only if the $A_k$ are contained in parallel (affine) hyperplanes.  If the affine span of $\sum_{k=1}^n A_k$ is $\bbR^n$, as we are assuming, then the latter is not possible and so $\diff_x\psi$ is injective, as desired. 

Now, assume that the affine span of $\sum_{k=1}^n A_k$ is not all of $\bbR^n$. After a change of variables of the form $x\mapsto \exp(A\log(x)+b)$, we can assume, without loss of generality, that the affine span of $\sum_{k=1}^n A_k$ is
\[
\bbR^m\times 0\subset \bbR^n
\]
with $m<n$. In this case we have that $\fkf$ is a random overdetermined system in the variables $X_1,\ldots,X_m$. Moreover, arguing as before, we have that for all $x\in \bbR^m$, $\mathrm{rank}\,\diff_x\psi=m$. Hence, by Proposition~\ref{prop:emptyoverdetermined}, we have
\[
\mcZ(\fkf,\bbR^m_+\times \{\mathds{1} \})=\varnothing
\]
with probability one. This implies that if the affine span of $\sum_{k=1}^n A_k$ is not all of $\bbR^n$  we have $ \mcZ (\fkf,\bbR^n_+)=\varnothing$ with probability one, and so $\mcZ(\fkf,\bbR^n_+)=\mcZ_r(\fkf,\bbR^n_+)$ with probability one.
\end{proof}

\begin{proof}[Geometric Proof of Proposition~\ref{prop:boundbinomialsystem}]

Without loss of generality, assume that the $\gamma_k$ are linearly independent. Otherwise the proposition is immediate, with both the integral and the expectation being zero.

Note that the the left-hand side equality follows from Corollary~\ref{cor:intprobformula_toruscase}, after taking $\xi_k(x)=\exp(\gamma_k^Tx+s_k)$ and observing that $\diff_x\xi_k=\exp(\gamma_k^Tx+s_k)\gamma_k^T$. 
Hence we only need to bound the expectation of the number of real zeros.

Observe that the system~\eqref{eq:specialsystem} have a solution if and only if for all $k$, $\fka_k$ and $\fkb_k$ have opposite signs. Moreover, in that case, the system is equivalent to the following linear system
\begin{equation}
    \left.
    \begin{array}{rl}
        \gamma_1^tX&=-s_1+\ln(-\fkb_1/\fka_1)\\
        \vdots&\\
        \gamma_n^tX&=-s_n+\ln(-\fkb_n/\fka_n)
    \end{array}
\right.
\end{equation}
which has exactly one regular real solution. This system has this unique solution inside $B$ if and only if the $\ln(-\fkb_k/\fka_k)$ are non-positive, which happens with probability $1/4^n$. Hence
\[
\bbE_\fkh \#\mcZ_r(\fkh,B)= \bbP(\#\mcZ_r(\fkh,B)\neq 0) = \frac{1}{4^n},
\]
as we wanted to show.
\end{proof}

\begin{proof}[Analytical Proof of Proposition~\ref{prop:boundbinomialsystem}]

The analytical proof is like the geometric one, but we compute directly now the left-hand side integral. Under the change of variables $y_k=-(\gamma_k^Tx_k+s_k)$, the left-hand side integral becomes
\[
\frac{1}{\pi^n}\int_{\bbR^n_+}\,\frac{\prod_{k=1}^n\exp(-y_k)}{\prod_{k=1}^n(1+\exp(-2y_k))}\,\mathrm{d}y.
\]
Under the further change of variables $z_k=\exp(-y_k)$, this integral becomes
\[
\frac{1}{\pi^n}\int_{[0,1]^n}\,\frac{1}{\prod_{k=1}^n(1+z_k^2)}\,\mathrm{d}z.
\]
Now, separating variables, this integral equals
\[
\left(\frac{1}{\pi}\int_{0}^1\,\frac{1}{1+t^2}\,\mathrm{d}t\right)^n=\frac{1}{4^n},
\]
where the last equality follows from $\arctan'(t)=1/(1+t^2)$.
\end{proof}

\section{Zeros of Random Unmixed Fewnomials}\label{sec:proofMainResultUnmixed}

We now prove Theorem~\ref{theo:MainTheoremNumberZeros_variancesunmixed}. We will actually prove a more general result, and then show that Theorem~\ref{theo:MainTheoremNumberZeros_variancesunmixed} is obtained as a corollary. 

\begin{theo}\label{theo:maintheorem_unmixedfulldetails}
Under the same notations and assumptions of Theorem~\ref{theo:maintheorem_fulldetails}, assume that
\[
A=A_1=\cdots=A_n
\]
and that the Minkowski sum $\sum_{k=1}^n\mcL(A,\liftfunct_k)$ satisfies
\begin{enumerate}
    \item[(MV)] for some $\liftfunct:A\rightarrow \bbR$, $\sum_{k=1}^n\mcL(A,\liftfunct_k)=n\mcL(A,\liftfunct)$.
\end{enumerate}
Then
\begin{equation}
    \bbE_{\fkf}\#\mcZ_r(\fkf,\bbR^n_+)\leq \frac{n+1}{4^n}\binom{V\left(\mcL(A,\liftfunct)\right)}{n+1}
\end{equation}
where $V\left(\mcL(A,\liftfunct)\right)\leq \# A$ is the number of vertices of $\mcL(A,\liftfunct)$.
\end{theo}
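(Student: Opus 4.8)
The plan is to rerun the argument behind Theorem~\ref{theo:maintheorem_fulldetails}, but to use the hypotheses $A=A_1=\cdots=A_n$ and (MV) to collapse the book-keeping and replace the crude per-cell count $\prod_k(t_k-1)$ by a sharper count governed by the vertices of the single polyhedron $\mcL(A,\liftfunct)$. By Proposition~\ref{prop:injectivity} it suffices to bound the expected number of \emph{regular} zeros, and, passing as in the proof of Theorem~\ref{theo:maintheorem_fulldetails} to the associated system of exponential sums $\fkg=(\fkg_1,\ldots,\fkg_n)$ with $\fkg_k=\sum_{\alpha\in A}\fkc_{k,\alpha}\exp(\alpha^TX+\liftfunct_k(\alpha))$, it is enough to estimate $\bbE_{\fkc}\#\mcZ_r(\fkg,\bbR^n)$. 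Decompose $\bbR^n=\bigcup M(\alpha_1,\ldots,\alpha_n)$, $\alpha_k\in A$, where $M(\alpha_1,\ldots,\alpha_n)$ is full-dimensional exactly when $\sum_k\binom{\liftfunct_k(\alpha_k)}{\alpha_k}$ is a vertex of $\sum_k\mcL(A,\liftfunct_k)$. By (MV) this Minkowski sum equals $n\,\mcL(A,\liftfunct)$, whose vertices are the points $n\binom{\liftfunct(\alpha)}{\alpha}$ with $\binom{\liftfunct(\alpha)}{\alpha}$ a vertex of $\mcL(A,\liftfunct)$; using (MV) together with the description of the vertices of a Minkowski sum, this forces $\alpha_1=\cdots=\alpha_n=:\alpha$ with $\alpha$ a vertex of $\mcL(A,\liftfunct)$. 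Thus the full-dimensional cells are precisely $V:=V(\mcL(A,\liftfunct))$ cells $M_\alpha$, one per vertex of $\mcL(A,\liftfunct)$, and by subadditivity of the zero-counting function we are reduced to proving, for each such $\alpha$,
\[
\bbE_{\fkc}\#\mcZ_r(\fkg,M_\alpha)\le \frac{1}{4^n}\binom{V-1}{n}.
\]

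To establish this per-cell bound I would proceed exactly as in the proof of Theorem~\ref{theo:maintheorem_fulldetails}: multiply each $\fkg_k$ by the monomial $\exp(-\alpha^TX-\liftfunct_k(\alpha))$ so that $\alpha$ becomes the origin and every other monomial is $\le 1$ on $M_\alpha$, apply Corollary~\ref{cor:intprobformula_phiform} together with the Cauchy--Binet formula, and bound each resulting summand---indexed by a tuple $(\beta_1,\ldots,\beta_n)\in(A\setminus\{\alpha\})^n$---using Proposition~\ref{prop:boundbinomialsystem} with $\gamma_k=\beta_k-\alpha$ and the same elementary inequality as in that proof, so that each summand is $\le 1/4^n$. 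A summand vanishes unless $\det(\beta_1-\alpha,\ldots,\beta_n-\alpha)\ne 0$, i.e.\ unless $\{\alpha,\beta_1,\ldots,\beta_n\}$ is affinely independent. The crucial additional input, which is where unmixedness and (MV) are really used, is that (i)~only tuples whose entries are \emph{vertices} of $\mcL(A,\liftfunct)$ give a nonzero contribution, because on $M_\alpha$ the relevant secondary combinatorics is that of the regular subdivision of $A$ induced by $\liftfunct$, in which no non-vertex of $\mcL(A,\liftfunct)$ participates; and (ii)~tuples obtained from one another by permuting a fixed $n$-element subset of vertices are counted only once, by the symmetry of the common support $A$. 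Granting (i)--(ii), the surviving contributions are indexed by the $n$-element subsets of the $V-1$ vertices of $\mcL(A,\liftfunct)$ other than $\alpha$, which gives the displayed bound.

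Summing the per-cell bound over the $V$ vertex-cells then yields
\[
\bbE_{\fkf}\#\mcZ_r(\fkf,\bbR^n_+)=\bbE_{\fkc}\#\mcZ_r(\fkg,\bbR^n)\le \frac{V}{4^n}\binom{V-1}{n}=\frac{n+1}{4^n}\binom{V}{n+1},
\]
which is the assertion of Theorem~\ref{theo:maintheorem_unmixedfulldetails}; Theorem~\ref{theo:MainTheoremNumberZeros_variancesunmixed} then follows because $V(\mcL(A,\liftfunct))\le\#A$ and because each of its two variance hypotheses implies (MV) (for (VM1)--(VM2) one checks directly that $\sum_k\mcL(A,\liftfunct_k)=n\,\mcL(A,\mathbf{0})$, while for $\var_1=\cdots=\var_n$ one may take $\liftfunct=\liftfunct_1$). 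I expect the main obstacle to be precisely the per-cell estimate of the second paragraph: the crude Cauchy--Binet-plus-triangle-inequality bound of Theorem~\ref{theo:maintheorem_fulldetails} only gives $\tfrac{1}{4^n}(t-1)^n$ per cell, hence the weaker overall bound $\tfrac{V}{4^n}(t-1)^n$, so the real content is the combinatorial statement (i)--(ii) about the regular subdivision attached to $\liftfunct$---the analogue, in the present generality, of the unmixed argument of~\cite{BETC-fewnomials}.
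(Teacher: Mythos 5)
Your skeleton is the same as the paper's: reduce to regular zeros via Proposition~\ref{prop:injectivity}, pass to exponential sums, decompose $\bbR^n$ into the cells $M(\alpha_1,\ldots,\alpha_n)$, and use (MV) to see that only the at most $V$ cells $M(\alpha,\ldots,\alpha)$, with $\alpha$ a vertex of $\mcL(A,\liftfunct)$, are full-dimensional. The gap is exactly where you flag it: the per-cell estimate, and neither of your claims (i), (ii) works as you justify it. Concerning (ii): once you expand over tuples $(\beta_1,\ldots,\beta_n)$ and apply the triangle inequality, the $n!$ permutations of a fixed $n$-subset appear as \emph{distinct nonnegative} summands; symmetry of the common support shows at best that these summands are equal (and only when the $\liftfunct_k$ coincide), not that they may be identified, so you gain nothing over the $(t-1)^n$ count. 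The paper's mechanism is different and uses unmixedness \emph{before} any absolute value is taken: write $\varphi_k=\Sigma_k\tilde\varphi$ with a common $\tilde\varphi$ and $\Sigma_k=\mathrm{diag}\bigl(\exp(\liftfunct_k(\alpha))\bigr)_{\alpha\in A}$, so that
\begin{equation*}
\begin{pmatrix}\fka_1^T\diff_x\varphi_1\\ \vdots\\ \fka_n^T\diff_x\varphi_n\end{pmatrix}
=\begin{pmatrix}(\Sigma_1\fka_1)^T\\ \vdots\\ (\Sigma_n\fka_n)^T\end{pmatrix}\diff_x\tilde\varphi
\end{equation*}
is a product of an $n\times t$ by a $t\times n$ matrix, and apply Cauchy--Binet to \emph{this} factorization. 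The expansion is then indexed by $n$-element subsets of $A\setminus\{\alpha\}$ from the outset (the cancellation happens inside the minors, which no counting-by-symmetry argument can reproduce), giving at most $\binom{t-1}{n}$ summands per cell, each bounded by $4^{-n}$ via Proposition~\ref{prop:boundbinomialsystem} exactly as in Theorem~\ref{theo:maintheorem_fulldetails}.

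Concerning (i): it is not true that only vertex exponents contribute. For any affinely independent $\{\alpha,\beta_1,\ldots,\beta_n\}$ the corresponding summand is a strictly positive integral over the full-dimensional cell $M(\alpha,\ldots,\alpha)$, whether or not the $\beta_k$ are vertices of $\mcL(A,\liftfunct)$; the regular-subdivision heuristic does not make these integrals vanish, and Proposition~\ref{prop:boundbinomialsystem} bounds them all by $4^{-n}$ indiscriminately. Note that the paper's own proof does not use anything like (i): its subset-indexed sum runs over all of $A\setminus\{\alpha\}$, i.e.\ $\binom{\#A-1}{n}$ summands per cell, which together with the at most $V$ vertex cells yields $\frac{V}{4^n}\binom{\#A-1}{n}\le\frac{n+1}{4^n}\binom{\#A}{n+1}$; this is what is actually needed for Theorem~\ref{theo:MainTheoremNumberZeros_variancesunmixed}, since there $V\le\#A$. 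So the repair is: drop (i), and replace the symmetry argument in (ii) by the factorization above; if you insist on a per-cell count of $\binom{V-1}{n}$ (your stated target when $V<\#A$), you would need a genuinely new argument showing that subsets containing non-vertices can be discarded or absorbed, which neither your proposal nor the sketch you are imitating provides.
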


\begin{proof}[Proof of Theorem~\ref{theo:maintheorem_unmixedfulldetails}]
The proof is the same as that of Theorem~\ref{theo:maintheorem_fulldetails}, but it differs in the combinatorics at two points: 

\emph{1st point}: When we write the decomposition \eqref{eq:decompositionMbetas}, we only care about those $\alpha_1,\ldots,\alpha_n$ such that
\[
\sum_{k=1}^n\begin{pmatrix} \liftfunct_k(\alpha_k)\\\alpha_k \end{pmatrix}
\]
is a vertex of $\sum_{k=1}^n\mcL(A,\liftfunct_k)$. However, due to the assumption (MV), we necessarily have that  $\alpha_1=\cdots=\alpha_n$, since those are the vertices of $\mcL(A,\liftfunct)$. Hence we only need to look at polyhedral cells of the form $M(\alpha,\ldots,\alpha)$ where $\alpha \in A$ is a vertex of $\mcL(A,\liftfunct)$---and so $n\alpha$ a vertex of $n\mcL(A,\liftfunct)$. Moreover, as we did in the proof of Theorem~\ref{theo:maintheorem_fulldetails}, we can assume, without loss of generality, that $\alpha=0$ and that for all $k$, $\liftfunct_k(\alpha)=0$. 

\emph{2nd point}: When we apply the Cauchy-Binet formula to obtain our decomposition. Since we use the same functions but different variances, we can write for each $k$,
\[
\varphi_k=\Sigma_k \tilde{\varphi}
\]
where $\tilde{\varphi}(x)=\begin{pmatrix}x^\alpha\end{pmatrix}_{\alpha\in A}$ and $\Sigma_k$ is a diagonal positive matrix with entries given by $\exp(\liftfunct_k(\alpha))$. 

 It is important to note that the claim ``we use the same function but different variances'' really requires the extra assumption $\alpha=\alpha_1=\cdots=\alpha_n$. Otherwise, when doing the translation to put $\alpha_1,\ldots,\alpha_n$ at the origin, we will end up with $\varphi_1,\ldots,\varphi_n$ being different not only up to multiplication by a diagonal positive matrix. However, we do not need the $\liftfunct_k(\alpha)$ being independent of $k$, since turning them into zero only requires multiplying the $\varphi_k$ by positive diagonal matrices. Now,
\[
\begin{pmatrix}
    \fka_1^T\diff_x\varphi_1\\
    \vdots\\
    \fka_n^T\diff_x\varphi_n
\end{pmatrix}=\begin{pmatrix}
    \fka_1^T&&\\&\ddots&\\&&\fka_n^T
\end{pmatrix}\begin{pmatrix}
    \Sigma_1\\\vdots\\\Sigma_n
\end{pmatrix}\diff_x\tilde{\varphi}=\begin{pmatrix}
    (\Sigma_1\fka_1)^T\\\vdots\\(\Sigma_n\fka_n)^T
\end{pmatrix}\diff_x\tilde{\varphi},
\]
and so, by the Cauchy-Binet formula~\cite[Theorem 4.15]{broidagill1989linearalgebratextbook} and the triangle inequality,
\[
\left|\det\begin{pmatrix}
    \fka_1^T\diff_x\varphi_1\\
    \vdots\\
    \fka_n^T\diff_x\varphi_n
\end{pmatrix}\right|
\leq \sum_{\{\beta_1,\ldots,\beta_n\}\subset\in A\setminus\{0\}}\prod_{k=1}^n\exp(\liftfunct_k(\beta_k))|\fka_{k,\beta_k}|\left|\det\begin{pmatrix}
    \diff_x\tilde{\varphi}_{\beta_1}\\
    \vdots\\
    \diff_x\tilde{\varphi}_{\beta_n}
\end{pmatrix}\right|,
\]
where the sum runs over subsets of size $n$. Hence we get
\[
\binom{t-1}{n}
\]
summands at most, instead of $(t-1)^n$. Now, for each summand, the same computation as before gives us that each summand becomes
\[
\frac{1}{\pi^n}\int_M\,\prod_{k=1}^n\frac{\sqrt{\|\varphi_k(x)\|^2-\varphi_{k,\beta_k}(x)^2}\exp( \beta_k^Tx+\liftfunct_k(\beta_k))}{\|\varphi_k(x)\|^2}\,|\det(\beta_1\,\cdots\,\beta_n)|\,\mathrm{d}x
\]
and so the proof ends as it did in the proof of Theorem~\ref{theo:maintheorem_fulldetails}. 
\end{proof}

The following lemma shows that Theorem \ref{theo:MainTheoremNumberZeros_variancesunmixed} is a corollary of Theorem \ref{theo:maintheorem_unmixedfulldetails}.

\begin{lem}\label{lem:UVVVimpliesMV}
Under the same notations and assumptions of Theorem~\ref{theo:maintheorem_fulldetails}, assume that $A=A_1=\cdots=A_n$. Then the following holds:
\begin{enumerate}
    \item[(1)] If $\liftfunct=\liftfunct_1=\cdots=\liftfunct_n$ then $\mcL(A,\liftfunct_1)=\cdots=\mcL(A,\liftfunct_n)$ and $\sum_{k=1}^n \mcL(A,\liftfunct_k) = n \mcL(A,\liftfunct)$
    \item[(2)] If for all $k$ and all $\alpha \in A$ we have $\liftfunct_k(\alpha) \leq 0$ with equality whenever $\alpha$ is a vertex of $P:=\conv(A)$, then, for all $k$, 
       \[ \mcL(A,\liftfunct_k)=(-\infty,0]\times P  \]
    and  so $\sum_{k=1}^n\mcL(A,\liftfunct_k)=(-\infty,0]\times \left(n\,\conv(A)\right)=n\mcL(A,0)$. 
\end{enumerate}
\end{lem}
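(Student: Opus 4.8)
The statement to prove is Lemma~\ref{lem:UVVVimpliesMV}, which is a purely combinatorial/polyhedral fact about upper envelopes $\mcL(A,\liftfunct)$ and has no probabilistic content. The plan is to handle the two items separately, each by unwinding the definition of $\mcL(A,\liftfunct)$ as the convex hull of the points $\begin{pmatrix}\liftfunct(\alpha)-s\\\alpha\end{pmatrix}$ for $\alpha\in A$, $s\geq 0$, and then using the elementary fact that Minkowski sum is additive on scalar multiples of a \emph{fixed} polyhedron (i.e.\ $\lambda Q + \mu Q = (\lambda+\mu)Q$ when $Q$ is convex).

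For item (1): if $\liftfunct_1 = \cdots = \liftfunct_n = \liftfunct$, then visibly $\mcL(A,\liftfunct_k)$ is the same polyhedron $Q := \mcL(A,\liftfunct)$ for every $k$, so $\sum_{k=1}^n \mcL(A,\liftfunct_k) = \underbrace{Q + \cdots + Q}_{n} = nQ$ by convexity of $Q$. This is immediate; I would state it in one line.

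For item (2): the key observation is that if $\liftfunct_k(\alpha)\leq 0$ for all $\alpha\in A$, with equality at every vertex of $P = \conv(A)$, then $\mcL(A,\liftfunct_k) = (-\infty,0]\times P$. To see ``$\subseteq$'': each generating point $\begin{pmatrix}\liftfunct_k(\alpha)-s\\\alpha\end{pmatrix}$ has first coordinate $\leq 0$ and second-coordinate-block $\alpha \in P$, and $(-\infty,0]\times P$ is convex, so it contains the convex hull. To see ``$\supseteq$'': every vertex $v$ of $P$ satisfies $\liftfunct_k(v) = 0$, so $\begin{pmatrix}0\\v\end{pmatrix}\in\mcL(A,\liftfunct_k)$; taking convex hulls of these gives $\{0\}\times P \subseteq \mcL(A,\liftfunct_k)$ (since $P = \conv(\mathrm{vertices})$), and then subtracting $s\geq 0$ in the first coordinate (allowed by definition of $\mcL$) gives all of $(-\infty,0]\times P$. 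Once the shape $\mcL(A,\liftfunct_k) = (-\infty,0]\times P$ is established for each $k$, the Minkowski sum is $\bigl((-\infty,0]+\cdots+(-\infty,0]\bigr)\times (P+\cdots+P) = (-\infty,0]\times nP$, again using convexity of the half-line and of $P$; and this equals $n\bigl((-\infty,0]\times P\bigr) = n\,\mcL(A,0)$, since $\mcL(A,0) = (-\infty,0]\times P$ by the same computation with $\liftfunct \equiv 0$.

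The only mild subtlety — and the one point I would be careful about — is the ``$\supseteq$'' direction in item (2): one must check that interior (non-vertex) points $\alpha\in A$ with $\liftfunct_k(\alpha) < 0$ do not make the envelope smaller than $(-\infty,0]\times P$. They cannot, precisely because $\mcL$ is an \emph{upper} envelope (we take the convex hull after allowing arbitrary downward shifts $-s$), so adding more lifted points with negative height only adds points already below the roof $\{0\}\times P$, which is itself forced into $\mcL(A,\liftfunct_k)$ by the vertices. I expect no real obstacle here; the whole lemma is a routine verification, and the bulk of the writing is just carefully citing the definition of $\mcL(A,\liftfunct)$ and the identity $\lambda Q+\mu Q=(\lambda+\mu)Q$ for convex $Q$.
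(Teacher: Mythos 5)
Your proposal is correct and follows essentially the same route as the paper: item (1) via the identity $Q+\cdots+Q=nQ$ for convex $Q$, and item (2) via the explicit identification $\mcL(A,\liftfunct_k)=(-\infty,0]\times P$ followed by the same Minkowski-sum computation. In fact your two-inclusion verification of $\mcL(A,\liftfunct_k)=(-\infty,0]\times P$ supplies detail that the paper's proof merely asserts, so nothing is missing.
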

\begin{proof}[Proof of Lemma~\ref{lem:UVVVimpliesMV}]
%If (UV), then all equations have the same variance structure and so all $s_k$ are the same. But then, we have that $\sum_{k=1}^n\mcL(A,s_k)=\sum_{k=1}^n\mcL(A,s_1)=n\mcL(A,s_1)$, where the last inequality follows from the fact that $\mcL(A,s_1)$ is convex---note that for $\alpha_1,\ldots,\alpha_n$ in a convex set $K$, we have that, 
(1) For a convex set $K \subseteq \mathbb{R}^n$ and $\alpha_1,\alpha_2,\ldots,\alpha_n \in K$
\[
\alpha_1+\cdots+\alpha_n = \frac{1}{n}(n\alpha_1+\cdots+n\alpha_n)\in nK,
\]
and so taking Minkowski sums of $K$ with itself is the same as taking integer dilations of $K$. This is essentially the first claim.

(2) For second claim, we have that, for all $k$, $\liftfunct_k \leq 0$ and equality happens at the vertices of $P$. Thus, for all $k$, $\mcL(A,\liftfunct_k)=(-\infty,0]\times P$. The rest follows as in (1).
\end{proof}

We finish with a proof of Corollary~\ref{cor:circuitcase}.

\begin{proof}[Proof of Corollary~\ref{cor:circuitcase}]
By Markov's inequality~\cite[Corollary 2.9]{conditionbook} and Theorem~\ref{theo:MainTheoremNumberZeros_variancesunmixed},
\[
\bbP_{\fkf}\left(\mcZ(\fkf,\bbR^n_+)\neq \varnothing \right)=\bbP_{\fkf}\left(\#\mcZ(\fkf,\bbR^n_+)\geq 1\right)\leq \bbE_{\fkf}\#\mcZ(\fkf,\bbR^n_+)\leq \frac{n+1}{4^n}\binom{n+\ell}{n+1},
\]
since $\#\mcZ(\fkf,\bbR^n_+)$ is a random variable with integer values. Now,
\[
(n+1)\binom{n+\ell}{n+1}= \ell \binom{n+\ell}{\ell}=\ell \prod_{k=1}^{\ell}\left(\frac{n}{k}+1\right)\leq \ell(n+\ell)^\ell.
\]
Hence the claim follows.
\end{proof}

\section{Volume of Random Projective Fewnomial Varieties}\label{sec:volumes}

To prove Theorem~\ref{theo:MainTheoremNumberZeros_volume}, we will need the following proposition.

\begin{prop}\label{prop:withsimplex}
Let $d_1,\ldots,d_{n+1}\in \bbN$ and $A_1,\ldots,A_{n+1}\subset \bbN^{n+1}$ be finite subsets such that for all $k$,
\[\{d_k e_0,\ldots,d_k e_n\}\subseteq A_k\subseteq d_k\Delta_n\]
where $\{e_0,\ldots,e_n\}$ is the standard basis of $\bbR^{n+1}$ and $\Delta_n:=\conv(e_0,\ldots,e_n)$ the $n$-simplex. Consider the random overdetermined homogeneous fewnomial system $\fkg$ given by
\begin{equation*}
\left.
    \begin{array}{rl}
        \fkg_1&=\sum_{\alpha\in A_1}\fkg_{1,\alpha}X^\alpha\\
        &\,\vdots\\
        \fkg_{n+1}&=\sum_{\alpha\in A_{n+1}}\fkg_{{n+1},\alpha}X^\alpha\\
    \end{array}
\right.
\end{equation*}
where the $\fkg_{k,\alpha}$ are independent continuous random variables. Then 
\[\mcZ(\fkg,\bbP^n_{\bbR})=\varnothing\]
with probability one.
\end{prop}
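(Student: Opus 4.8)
The plan is to condition on the first $n$ equations and use that the last one is independent of them and has a continuous distribution. Set $Z:=\mcZ\big((\fkg_1,\ldots,\fkg_n),\bbP^n_{\bbR}\big)$. I would establish two facts: (i) $Z$ is finite with probability one; and (ii) for each fixed point $\zeta\in\bbP^n_{\bbR}$ one has $\bbP\big(\fkg_{n+1}(\zeta)=0\big)=0$. Granting these, conditioning on $\fkg_1,\ldots,\fkg_n$ (on which $Z$ depends measurably) and using that $\fkg_{n+1}$ is independent of them, a union bound over the almost surely finite set $Z$ yields
\begin{align*}
\bbP\big(\mcZ(\fkg,\bbP^n_{\bbR})\neq\varnothing\big)
&=\bbE_{\fkg_1,\ldots,\fkg_n}\,\bbP\big(\exists\,\zeta\in Z:\fkg_{n+1}(\zeta)=0\mid\fkg_1,\ldots,\fkg_n\big)\\
&\leq\bbE_{\fkg_1,\ldots,\fkg_n}\sum_{\zeta\in Z}\bbP\big(\fkg_{n+1}(\zeta)=0\mid\fkg_1,\ldots,\fkg_n\big)=0,
\end{align*}
which is the claim (here I use that $\bbP(\fkg_{n+1}(z)=0)=0$ for every fixed $z\in\bbP^n_{\bbR}$, so the inner conditional probability vanishes even for the $\fkg_1,\ldots,\fkg_n$-measurable points $\zeta\in Z$; the measurability of $\sum_{\zeta\in Z}(\cdots)$ is routine).

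For (ii): since $\{d_{n+1}e_0,\ldots,d_{n+1}e_n\}\subseteq A_{n+1}$ and $\zeta\neq 0$ as a point of $\bbP^n$, some coordinate $\zeta_i$ is nonzero, hence $\zeta^{\,d_{n+1}e_i}=\zeta_i^{d_{n+1}}\neq 0$. Thus $\fkg_{n+1}(\zeta)=\sum_{\alpha\in A_{n+1}}\fkg_{n+1,\alpha}\zeta^{\alpha}$ is, after conditioning on all coefficients of $\fkg_{n+1}$ other than $\fkg_{n+1,d_{n+1}e_i}$, a nonconstant affine function of the continuous random variable $\fkg_{n+1,d_{n+1}e_i}$; the law of such a variable is atomless, so $\bbP(\fkg_{n+1}(\zeta)=0)=0$.

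For (i) — which I expect to be the main obstacle — I would argue that the set $B$ of coefficient tuples $(c_{k,\alpha})_{1\le k\le n,\;\alpha\in A_k}$ for which the complex zero set $\mcZ\big((\fkg_1,\ldots,\fkg_n),\bbP^n_{\bbC}\big)$ has dimension $\geq 1$ is a \emph{proper} Zariski-closed subset of the coefficient space $\bbC^{N}$, where $N:=\sum_{k=1}^n\#A_k$. Closedness follows from upper semicontinuity of fiber dimension for the projection to $\bbC^{N}$ of the incidence variety $\{\,((c_{k,\alpha}),\zeta)\mid \fkg_1(\zeta)=\cdots=\fkg_n(\zeta)=0\,\}\subseteq\bbC^{N}\times\bbP^n_{\bbC}$, together with properness of $\bbP^n_{\bbC}$ (which makes the relevant image closed). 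Properness of $B$ is witnessed by the binomial system $\fkg_k=X_0^{d_k}-X_k^{d_k}$, $k=1,\ldots,n$, which is admissible because $d_ke_0,d_ke_k\in A_k$: on $\{X_0=0\}$ it forces $X_1=\cdots=X_n=0$, which is impossible in $\bbP^n$, while on the chart $X_0=1$ it reduces to $X_k^{d_k}=1$, so the common zero set is finite (of cardinality $\prod_{k=1}^n d_k$). Hence $B\cap\bbR^{N}$ is a proper real-algebraic subset of the real coefficient space, so it is Lebesgue-null and is avoided with probability one; off $B$ one has $\mcZ\big((\fkg_1,\ldots,\fkg_n),\bbP^n_{\bbR}\big)\subseteq\mcZ\big((\fkg_1,\ldots,\fkg_n),\bbP^n_{\bbC}\big)$ finite, proving (i). The delicate point throughout is precisely this Zariski-closedness of the positive-dimensional locus; the probabilistic bookkeeping in the first paragraph and the computation in (ii) are elementary.
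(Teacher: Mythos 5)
Your argument is correct, but it follows a genuinely different route from the paper. The paper proves the proposition by induction on $n$: restricting the first $n$ equations to each coordinate hyperplane and invoking the induction hypothesis shows that, almost surely, no zero lies on a coordinate hyperplane, and the remaining toric part $\bigcup_S S\bbP^n_+$ is handled by citing the fact that a generic overdetermined system of Laurent polynomials has no zeros in $(\bbC^*)^n$ (Yu's lemma); this orthant decomposition is then reused in the proof of the volume theorem. You instead split the system as ``first $n$ equations'' plus ``independent last equation'': you show that off a proper Zariski-closed locus $B$ of coefficients the complex zero set of $\fkg_1,\ldots,\fkg_n$ in all of $\bbP^n_{\bbC}$ is finite (via upper semicontinuity of fiber dimension, properness of $\bbP^n_{\bbC}$, and the explicit admissible witness $X_0^{d_k}-X_k^{d_k}$, which is exactly where the hypothesis $\{d_ke_0,\ldots,d_ke_n\}\subseteq A_k$ enters, and which has no zeros on coordinate hyperplanes, so no separate induction over hyperplanes is needed), and then you kill the almost surely finite set $Z$ with $\fkg_{n+1}$ by a Fubini/union-bound argument using the pure powers $X_i^{d_{n+1}}$. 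Your version is self-contained (no external genericity lemma) and isolates clearly how the support hypothesis is used; the paper's version is shorter given the citation and sets up machinery reused later. One small point to tighten: your step ``$B\cap\bbR^N$ is Lebesgue-null and is avoided with probability one'' silently assumes the coefficient laws are absolutely continuous; the proposition only assumes continuous (atomless) laws, so you should instead note that $B\cap\bbR^N$ lies in the zero set of a nonzero real polynomial and that a nonzero polynomial in independent atomless random variables vanishes with probability zero (provable by the same conditioning trick you already use in your step (ii)); the paper's appeal to genericity rests on the same standard fact.
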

\begin{proof}[Proof of Theorem~\ref{theo:MainTheoremNumberZeros_volume}]
Consider a homogeneous fewnomial system $f$ given by
\begin{equation*}
\left.
    \begin{array}{rl}
        f_1&=\sum_{\alpha\in A_1}f_{1,\alpha}X^\alpha\\
        &\,\vdots\\
        f_q&=\sum_{\alpha\in A_q}f_{q,\alpha}X^\alpha\\
    \end{array}
\right.
\end{equation*}
Then, by \cite[(2.7)]{matherthomtheorybook}, $\mcZ(f,\bbP^n_{\bbR})$ admits a finite Whitney stratification. Thus, we can consider a partition of $\mcZ(f,\bbP^n_{\bbR})$ into disjoint subsets
\[\mcZ_0(f,\bbP^n_{\bbR}),\ldots,\mcZ_n(f,\bbP^n_{\bbR})\subset \bbP^n_{\bbR}\]
such that that $\mcZ_\ell(f,\bbP^n_{\bbR})$ is an $\ell$-dimensional smooth submanifold of $\mcZ(f,\bbP^n_{\bbR})$ or empty.

Let $\fkH_1,\ldots,\fkH_{n-q}$ be random hyperplanes of $\bbP^n_{\bbR}$ given by
\[
\fkl_k=\sum_{i=0}^n\fka_{k,i}X_i
\]
with the $\fka_{k,i}$ i.i.d. standard Gaussians---one can easily see that this is equivalent to moving a fixed $(n-q)$-plane $L$ in $\bbP^{n}_{\bbR}$ via a random $\fkg\in O(n+1)$ (with respect the Haar probability measure). By \cite[Proposition A.18]{conditionbook}, all the intersections
\begin{equation}\label{eq:linearintersection}
    \mcZ_\ell(f,\bbP^n_\bbR)\cap \fkH_1\cap\cdots \cap \fkH_{n-q}
\end{equation}
are transversal with probability one. Hence, for $\ell<n-q$, \eqref{eq:linearintersection} is empty with probability one; and for $\ell\geq n-q$, \eqref{eq:linearintersection} is, with probability one, either empty or a smooth submanifold of dimension $\ell-(n-q)$. By Poincaré's kinematic formula~\cite[Theorem A.55]{conditionbook}, we have that
\[
\vol_{n-q}\mcZ_{n-q}(f,\bbP^n_\bbR)=\frac{\vol_{n-q}\bbP^{n-q}_{\bbR}}{\vol_{n}\bbP^{n}_{\bbR}}\,\bbE_{\fkH_1,\ldots,\fkH_{n-q}}\#(\mcZ_{n-q}(f,\bbP^n_\bbR)\cap \fkH_1\cap\cdots \cap \fkH_{n-q}).
\]
Now, set the convention that $\vol_{n-q}(\mcZ(f,\bbP^n_\bbR))=\infty$, if for some $\ell>n-q$, $\mcZ_{\ell}(f,\bbP^n_\bbR)\neq \varnothing$; and $\mcZ_{n-q}(f,\bbP^n_\bbR)=0$, if for all $\ell\geq n-q$, $\mcZ_{\ell}(f,\bbP^n_\bbR)= \varnothing$. Then, we have that
\[
\vol_{n-q}\mcZ(f,\bbP^n_\bbR)=\frac{\vol_{n-q}\bbP^{n-q}_{\bbR}}{\vol_{n}\bbP^{n}_{\bbR}}\,\bbE_{\fkH_1,\ldots,\fkH_{n-q}}\#(\mcZ(f,\bbP^n_\bbR)\cap \fkH_1\cap\cdots \cap \fkH_{n-q}).
\]
On the one hand, if some for some $\ell>n-q$, $\mcZ_{\ell}(f,\bbP^n_\bbR)$ is non-empty, then the right-hand side will be infinite---\eqref{eq:linearintersection} will be a non-empty positive dimensional smooth submanifold with probability one. On the other hand, if for all $\ell\geq n-q$, $\mcZ_{\ell}(f,\bbP^n_\bbR)$ is empty, then the right-hand side will be zero---\eqref{eq:linearintersection} will be empty with probability one.

By the above discussion, we have that
\[
\bbE_{\fkf}\vol_{n-q}\mcZ(\fkf,\bbP^n_\bbR)=\frac{\vol_{n-q}\bbP^{n-q}_{\bbR}}{\vol_{n}\bbP^{n}_{\bbR}}\,\bbE_{\fkf}\bbE_{\fkH_1,\ldots,\fkH_{n-q}}\#(\mcZ(\fkf,\bbP^n_\bbR)\cap \fkH_1\cap\cdots \cap \fkH_{n-q}),
\]
and so, by Tonelli's theorem,
\begin{equation}\label{eq:kinematic}
\bbE_{\fkf}\vol_{n-q}\mcZ(\fkf,\bbP^n_\bbR)=\frac{\vol_{n-q}\bbP^{n-q}_{\bbR}}{\vol_{n}\bbP^{n}_{\bbR}}\,\bbE_{(\fkf,\fkl_1,\ldots,\fkl_{n-q})}\# \mcZ((\fkf,\fkl_1,\ldots,\fkl_{n-q}),\bbP^n_\bbR). 
\end{equation}

Assume temporarily that, for all $i$,
\[
\# \mcZ((\fkf,\fkl_1,\ldots,\fkl_{n-q}),H_i)=0,
\]
where $H_i:=\mcZ(X_i,\bbP^n_\bbR)$ is the $i$th coordinate hyperplane in $\bbP^n_\bbR$, with probability one. (We will prove this at the end.) Then
\[
\mcZ((\fkf,\fkl_1,\ldots,\fkl_{n-q}),\bbP_\bbR^n)= \bigcup\{\mcZ((\fkf,\fkl_1,\ldots,\fkl_{n-q}),S\bbP_+^n)\mid S=\mathrm{diag}(\pm1,\ldots,\pm1)\},
\]
where $\bbP_+^n:=\{x\in \bbP^n\mid x_0>0,\ldots,x_n>0\}$, with probability one; and therefore, by symmetry,
\begin{equation}\label{eq:volumeofpositiveprojective}
   \bbE_\fkf\,\vol_{n-q}\mcZ(\fkf,\bbP^n_\bbR)=2^n\,\frac{\vol_{n-q}\bbP^{n-q}_{\bbR}}{\vol_{n}\bbP^{n}_{\bbR}}\,\bbE_{\fkf,\fkl_1,\ldots,\fkl_{n-q}}\#\mcZ((\fkf,\fkl_1,\ldots,\fkl_{n-q}),\bbP_+^n). 
\end{equation}
Now, by Theorem~\ref{theo:MainTheoremNumberZeros},
\[
\bbE_\fkf\,\vol_{n-q}\mcZ(\fkf,\bbP_+^n)\leq \frac{1}{4^n}\,\frac{\vol_{n-q}\bbP^{n-q}_{\bbR}}{\vol_{n}\bbP^{n}_{\bbR}}\,(n(n+1))^{n-q}\prod_{k=1}^qt_k(t_k-1),
\]
since the $\fkl_k$ have support of size $n+1$, and we obtain the desired upper bound. %Finally, we just use the formula for the volume of $\bbS^n$ \cite[Proposition~2.19 (a)]{conditionbook} and Stirling approximation~\cite[(2.14)]{conditionbook} to deduce that 
%\begin{equation}    \vol_{n-q}\bbP^{n-q}_{\bbR}=\frac{\pi^{\frac{n-q+1}{2}}}{\Gamma\left(\frac{n-q+1}{2}\right)}\leq \sqrt{\frac{\mathrm{e}}{2}}(2\mathrm{e}\pi)^{\frac{n-q}{2}}(n-q+1)^{-\frac{n-q}{2}}
%\end{equation}
%and obtain the desired upper bound.

We now prove our claim regarding coordinate hyperplanes. Without loss of generality assume that $i=n$. To show that
\[
\mcZ((\fkf,\fkl_1,\ldots,\fkl_{n-q}),H_n)
\]
is empty, it is enough to show that the random overdetermined system
\[
\fkg:=(\fkf,\fkl_1,\ldots,\fkl_{n-q})(X_0,\ldots,X_{n-1},0),
\]
obtained by setting $X_n$ equal to $0$, has no zeros in $\bbP^{n-1}_{\bbR}$. But this is precisely what Proposition~\ref{prop:withsimplex} states. 
\end{proof}

\begin{proof}[Proof of Proposition~\ref{prop:withsimplex}]
We will prove this by induction on $n$. The statement is obvious for $n=0$. Consider the system
\[\fkg_1(X_0,\ldots,X_{n-1},0),\ldots,\fkg_n(X_0,\ldots,X_{n-1},0),\]
then, by the induction hypothesis, this system does not have any zero in $\bbP^{n-1}_{\bbR}$ with probability one. In other words, as adding equations can only reduce the number of zeros, 
\[
\mcZ(\fkg,\bbP^n_{\bbR})\cap \mcZ(X_n,\bbP^n_{\bbR})=\varnothing
\]
with probability one. Now, the same argument works, if instead of $X_n$ we consider $X_i$. Therefore, for all $i$,
\[\mcZ(\fkg,\bbP^n_{\bbR})\cap \mcZ(X_i,\bbP^n_{\bbR})=\varnothing\]
with probability one. Now, we only need to show that
\[
\bigcup\{\mcZ(\fkg,S\bbP_+^n)\mid S=\mathrm{diag}(\pm1,\ldots,\pm1)\}=\varnothing,
\]
with probability one, where $\bbP_+^n:=\{x\in \bbP^n\mid x_0>0,\ldots,x_n>0\}$. Now, by symmetry, it is enough to show that, with probability one,
\[
\mcZ(\fkg,\bbP_+^n)=\varnothing.
\]
However, the latter follows from the fact that a generic overdetermined system of Laurent polynomials does not have zeros in $(\bbC^*)^n$~\cite[Lemma 1]{yu2016}. This is not explicitly stated in \cite[Lemma 1]{yu2016}, but it can be easily seen by considering the overdetermined system of $q>n$ equations in $n$ variables as a system in $q$ variables after adding $q-n$ dummy variables---then, by \cite[Lemma 1]{yu2016}, the corresponding ideal is not propper for generic polynomials and, by evaluating to 1 the dummy variables, so it was not the original ideal.
\end{proof}

\begin{remark}
A shorter proof of Proposition~\ref{prop:withsimplex} can be obtained by applying directly the results from \cite[Ch. 8]{gkzbook}.
\end{remark}

\subsection*{Acknowledgements}
We thank Alicia Dickenstein for always making time to share her knowledge whenever we asked a question. 
We thank Maurice Rojas for inspiration and his enthusiasm for fewnomials.  We are thankful to Peter Bürgisser and Elias Tsigaridas for helpful discussions. We also thank the reviewers for insightful comments about the paper that helped improving the final version of the paper.

A.E. and J.T.-C. thanks Paul Breiding, Sonja Petrovic and Gregory G. Smith for organizing the \emph{BIRS Workshop ``Random Algebraic Geometry'' (23w5070)} at the Banff International Research Station, in Canada, from April 16 to April 21, 2023, which served as inspiration for this paper. 

A.E. is supported by NSF CCF 2110075 and NSF 2414160.

M.T. is funded by the European Union under the Grant Agreement no. 101044561, POSALG. 
Views and opinions expressed are those of the author(s) only and do not necessarily reflect 
those of the European Union or European Research Council (ERC). Neither the 
European Union nor ERC can be held responsible for them.

J.T.-C. thanks Evgeniya Lagoda for her constant support and Jazz G. Suchen for his insightful suggestion regarding equation~\eqref{eq:intprobformula_phiform}. 
J.T.-C. also thanks Alperen A. Ergür, Eduardo Dueñez, José Iovino, José Morales, Nikos Salingaros, Chris La Valle, Chris Duffer and the UTSA Problem Solving Club (specially Thamara, Rachell, Adam, Taylor, Lila and Alyssa) for making him feel welcome at the University of Texas at San Antonio (UTSA) during his postdoctoral stay there; Jeaheang Bang and Huan Xu for the nice atmosphere at the postdocs' office; and Alejandra Vincencio for making the official paperwork much easier during this stay at UTSA.

{\small 
\printbibliography
}

\appendix
\section{Borel measures and Dynkin's lemma}\label{sec:meeasures}

The point of this appendix is to make clearer, for readers unfamiliar with measure theory, the measure-theoretic arguments underlying the proof of Theorem~\ref{theo:intprobformula}.

Recall the following definitions for families of sets.

\begin{defi}
Let $X$ be a set and $\mcS\subset \mathcal{P}(X)$ a non-empty collection of subsets of $X$. We say that:
\begin{enumerate}
    \item[($\sigma$)] \cite[4.1 Definition]{aliprantisborder2006book} $\mcS$ is a \emph{$\sigma$-algebra} if $\mcS$ contains the empty set and it is closed under complements and countable pairwise disjoint unions, i.e.,
    \begin{itemize}
        \item $\varnothing\in \mcS$.
        \item for all $A\in\mcS$, $X\setminus A\in\mcS$.
        \item for every pairwise disjoint numerable subfamily $\{A_n\}_{n\in\bbN}$ of $\mcS$, $\bigcup_{n\in\bbN}A_n\in \mcS$.
    \end{itemize}
    \item[$(\lambda)$] \cite[4.9 Definition]{aliprantisborder2006book} $\mcS$ is a \emph{$\lambda$-system} if $\mcS$ contains $X$ and it is closed under relative complements and monotone numerable unions, i.e.,
    \begin{itemize}
        \item $X\in \mcS$.
        \item for all $A,B\in\mcS$ such that $B\subseteq A$, $A\setminus B\in\mcS$.
        \item for every numerable subfamily $\{A_n\}_{n\in\bbN}$ of $\mcS$ that is increasing (for all $n$, $A_n\subseteq A_{n+1}$), $\bigcup_{n\in\bbN}A_n\in \mcS$.
    \end{itemize}
    \item[$(\pi)$] \cite[4.9 Definition]{aliprantisborder2006book} $\mcS$ is a \emph{$\pi$-system} if $\mcS$ is closed under finite intersections, i.e., for every $A,B\in\mcS$, $A\cap B\in\mcS$.
\end{enumerate}
\end{defi}

Given any collection of sets, we can consider the $\sigma$-algebra, $\lambda$-system and $\pi$-system that it generates, by considering the smallest $\sigma$-algebra, $\lambda$-system or $\pi$-system that contains it. For a topological space, the following $\sigma$-algebra is essential.

\begin{defi}\cite[4.14 Definition]{aliprantisborder2006book}
Let $X$ be a topological space, its \emph{Borel $\sigma$-algebra}, $\mcB(X)$, is the $\sigma$-algebra generated by the collection of its open subsets.
\end{defi}

The Dynkin's lemma allows us to extend statements from $\pi$-systems to the $\sigma$-algebra they generate by checking that they are satisfied for a $\lambda$-system.

\begin{theo}[Dynkin's lemma]\label{theo:DynkinLemma}\cite[4.11 Dynkin’s Lemma]{aliprantisborder2006book}
Let $X$ be a set and $\mcS$ collection of subsets of $X$. If $\mcS$ is a $\pi$-system, then the $\lambda$-system it generates is a $\sigma$-algebra.    
\end{theo}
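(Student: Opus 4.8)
The plan is to carry out the classical ``$\pi$--$\lambda$ trick''. Write $\mcL$ for the $\lambda$-system generated by the $\pi$-system $\mcS$. The key reduction is the elementary observation that \emph{a $\lambda$-system which is also a $\pi$-system is automatically a $\sigma$-algebra}: if $\mcL$ is closed under finite intersections and relative complements and contains $X$, then $\varnothing=X\setminus X\in\mcL$, $X\setminus A\in\mcL$ for every $A\in\mcL$, finite unions lie in $\mcL$ via $A\cup B=X\setminus\big((X\setminus A)\cap(X\setminus B)\big)$, and an arbitrary countable union $\bigcup_n A_n$ equals the increasing union $\bigcup_n\big(A_1\cup\cdots\cup A_n\big)$, hence lies in $\mcL$ by the monotone-union axiom. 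So it suffices to show that $\mcL$ is closed under finite intersections.

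To that end, for any $A\subseteq X$ I would introduce the trace family
\[
\mcL_A:=\{B\subseteq X\mid A\cap B\in\mcL\}.
\]
First I would check that whenever $A\in\mcL$, the family $\mcL_A$ is itself a $\lambda$-system: it contains $X$ since $A\cap X=A\in\mcL$; it is closed under relative complements because $A\cap(C\setminus B)=(A\cap C)\setminus(A\cap B)$ with $A\cap B\subseteq A\cap C$ both in $\mcL$; and it is closed under increasing countable unions because $A\cap\bigcup_n B_n=\bigcup_n(A\cap B_n)$ is an increasing union of members of $\mcL$.

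The argument is then a two-step bootstrap. Since $\mcS$ is a $\pi$-system, for each fixed $A\in\mcS$ we have $\mcS\subseteq\mcL_A$; as $\mcL_A$ is a $\lambda$-system containing $\mcS$ and $\mcL$ is the smallest such, we get $\mcL\subseteq\mcL_A$. Unwinding: $A\cap B\in\mcL$ for every $A\in\mcS$ and every $B\in\mcL$. Now fix $B\in\mcL$; the line just proved says $\mcS\subseteq\mcL_B$, and $\mcL_B$ is again a $\lambda$-system, so $\mcL\subseteq\mcL_B$. Hence $A\cap B\in\mcL$ for all $A,B\in\mcL$, i.e.\ $\mcL$ is a $\pi$-system, and by the reduction of the first paragraph it is a $\sigma$-algebra.

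The only genuinely delicate point is the minimality step ``a $\lambda$-system containing $\mcS$ must contain $\mcL$'', which is invoked twice and is the conceptual heart of the proof; it is immediate from the definition of the generated $\lambda$-system. Everything else is routine Boolean bookkeeping with the three closure axioms, so I do not expect any serious obstacle.
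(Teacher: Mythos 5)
Your proof is correct and is the standard $\pi$--$\lambda$ argument: show the generated $\lambda$-system $\mcL$ is a $\pi$-system via the trace families $\mcL_A$ and the two-step bootstrap, then observe that a $\lambda$-system closed under finite intersections is a $\sigma$-algebra (your closure under arbitrary countable unions in particular gives the paper's stated closure under countable disjoint unions). The paper itself offers no proof of this statement---it is quoted from Aliprantis--Border [4.11], whose proof is essentially the one you wrote---so your argument matches the intended one.
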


Recall that a topological space $X$ is \emph{$\sigma$-compact} if we can write $X$ as a countable monotone union of compact sets and that a \emph{Borel measure} of $X$ is a measure of the form
\[
\mu:\mcB(X)\rightarrow [0,\infty].
\]
The following theorem lies at the core of the proof of Theorem~\ref{theo:intprobformula}.

\begin{theo}\label{theo:measuretheoryargument}
Let $X$ be a $\sigma$-compact topological space and $\mu$ and $\nu$ Borel measures on $X$. Assume that there is a collection of open subsets $\mcU$ such that:
\begin{enumerate}
\item[(U0)] $\mcU$ is a \emph{base} for the topology of $X$, i.e., every open set in $X$ is an union of open sets in $\mcU$.
\item[(U1)] $\mcU$ is closed under taking subsets, i.e., if $U$ and $V$ are open sets, $V\subseteq U$ and $U\in\mcU$, then $V\in \mcU$.
\item[(U2)] for all $U\in \mcU$, $\mu(U)$ and $\nu(U)$ are finite.
\item[(U3)] $\mu$ and $\nu$ agree on $\mcU$, i.e., for all $U\in\mcU$, $\mu(U)=\nu(U)$.
\end{enumerate}
Then $\mu=\nu$.
\end{theo}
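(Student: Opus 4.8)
The plan is to prove Theorem~\ref{theo:measuretheoryargument} by a standard application of Dynkin's lemma (Theorem~\ref{theo:DynkinLemma}), together with an exhaustion argument to handle the fact that $\mu$ and $\nu$ need not be finite on all of $X$. First I would reduce to the case where $X$ is itself a member of the relevant monotone family: write $X=\bigcup_{m\in\bbN}K_m$ with $K_m$ compact and $K_m\subseteq K_{m+1}$, and cover each $K_m$ by finitely many sets of $\mcU$ (possible since $\mcU$ is a base by (U0)); taking the union of these finitely many sets produces an open set $W_m\in\mcU$? — here one must be slightly careful, since $\mcU$ need not be closed under finite unions. So instead I would work directly: for each fixed $U\in\mcU$, define
\[
\mcD_U:=\{B\in\mcB(X)\mid \mu(B\cap U)=\nu(B\cap U)\}.
\]
Using (U2) (finiteness of $\mu(U)$ and $\nu(U)$), the relative-complement and monotone-union axioms for measures show that $\mcD_U$ is a $\lambda$-system. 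By (U1), for every $V\in\mcU$ we have $V\cap U\in\mcU$, hence $\mu(V\cap U)=\nu(V\cap U)$ by (U3), so $\mcU\subseteq\mcD_U$. Since $\mcU$ is a $\pi$-system — this is where I use (U1) again, as $V\cap V'\subseteq V\in\mcU$ forces $V\cap V'\in\mcU$, so $\mcU$ is trivially closed under finite intersections — Dynkin's lemma gives that the $\lambda$-system generated by $\mcU$ is a $\sigma$-algebra containing $\mcU$, hence containing $\mcB(X)$ (as $\mcU$ is a base, the open sets, and therefore $\mcB(X)$, lie in $\sigma(\mcU)$). Therefore $\mcD_U=\mcB(X)$: that is, $\mu(B\cap U)=\nu(B\cap U)$ for every Borel $B$ and every $U\in\mcU$.

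Next I would upgrade this to arbitrary open sets and then to all Borel sets via $\sigma$-compactness. Given the cover $X=\bigcup_m K_m$, choose for each $m$ a finite subcover of $K_m$ by sets of $\mcU$ and let $G_m$ be the (finite) union of that subcover and of the sets chosen at stages $1,\dots,m-1$, so that $G_m$ is open, $G_m\subseteq G_{m+1}$, $\bigcup_m G_m=X$, and each $G_m$ is a finite union $U_1\cup\cdots\cup U_r$ with $U_j\in\mcU$. For such a finite union, inclusion–exclusion together with the already-established identity $\mu(B\cap U_j)=\nu(B\cap U_j)$ — valid also for the sets $U_{j_1}\cap\cdots\cap U_{j_s}$, since those lie in $\mcU$ by the $\pi$-system property — gives $\mu(B\cap G_m)=\nu(B\cap G_m)$ for every Borel $B$; the finiteness needed to run inclusion–exclusion is again supplied by (U2) applied to each $U_j$. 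Finally, for an arbitrary Borel set $B$, the sets $B\cap G_m$ increase to $B$, so by continuity from below of measures
\[
\mu(B)=\lim_{m\to\infty}\mu(B\cap G_m)=\lim_{m\to\infty}\nu(B\cap G_m)=\nu(B),
\]
which is the desired equality $\mu=\nu$.

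The only genuine subtlety — the step I would flag as the main obstacle — is the passage from ``$\mcU$'' to a countable exhausting family of open sets on which both measures are finite, because $\mcU$ is assumed closed only under \emph{containment of open subsets}, not under finite unions; without care one cannot directly say ``$X$ is a countable union of $\mcU$-sets of finite measure.'' The fix is exactly the inclusion–exclusion bookkeeping above: finite unions of $\mcU$-sets still have finite $\mu$- and $\nu$-measure and still satisfy $\mu(B\cap\,\cdot\,)=\nu(B\cap\,\cdot\,)$, and $\sigma$-compactness guarantees a countable increasing sequence of such finite unions exhausting $X$. Everything else — that $\mcD_U$ is a $\lambda$-system, that $\mcU$ is a $\pi$-system, that $\sigma(\mcU)\supseteq\mcB(X)$ — is routine and follows immediately from the definitions recalled in this appendix and from hypotheses (U0)–(U3).
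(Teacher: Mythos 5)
Your proposal is correct, and it is organized differently from the paper's proof, so a short comparison is in order. The paper first restricts $\mu$ and $\nu$ to an exhaustion $X=\bigcup_n X_n$ by compact sets and applies Dynkin's lemma once per compact piece, taking as $\lambda$-system $\Lambda_n=\{B\in\mcB(X_n)\mid \mu_n(B)=\nu_n(B)\}$; the finiteness needed for the relative-complement axiom is obtained up front by showing $\mu_n(X_n)=\nu_n(X_n)<\infty$ via a finite subcover of $X_n$ by $\mcU$-sets and inclusion--exclusion, and the final identity $\mu=\nu$ comes from summing over the annuli $X_{k+1}\setminus X_k$. You instead localize at each $U\in\mcU$, taking the textbook $\lambda$-system $\mcD_U=\{B\in\mcB(X)\mid \mu(B\cap U)=\nu(B\cap U)\}$ (finiteness is supplied directly by (U2)), apply Dynkin once per $U$, and only afterwards invoke $\sigma$-compactness, building the increasing open sets $G_m$ from finite subcovers, running inclusion--exclusion on each $G_m$, and finishing with continuity from below. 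The ingredients are the same (Dynkin, the fact that (U1) makes $\mcU$ a $\pi$-system, (U2) for finiteness, compact exhaustion plus inclusion--exclusion), but your version avoids restricting measures to subspaces and checking the $\lambda$-system axioms for $\Lambda_n$, while the paper's version has the advantage of working with finite measures from the start. One step both treatments pass over quickly is the claim that $\sigma(\mcU)\supseteq\mcB(X)$ (respectively $\sigma(\mcU_n)=\mcB(X_n)$ in the paper): a base only gives arbitrary unions, so one needs every open $G$ to be a \emph{countable} union of $\mcU$-sets. In your setup this is actually automatic from your own construction: with the finite subcovers $U_{m,1},\ldots,U_{m,r_m}$ of $K_m$ one has $G=\bigcup_{m,j}(G\cap U_{m,j})$ and each $G\cap U_{m,j}\in\mcU$ by (U1), so the countability issue is resolved by exactly the bookkeeping you already flagged as the main subtlety; it would be worth stating this explicitly, but it is not a gap, and the paper's proof relies on the same unelaborated claim.
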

\begin{proof}
By assumption, there is an increasing sequence of compact subsets $\{X_n\}_{n\in\bbN}$ such that $X=\bigcup_{n\in\bbN}X_n$. Assume without loss of generality that $X_0=\varnothing$. For each $n\in\bbN$, let
\[
\mu_n:=\mu_{|\mcB(X_n)}~\text{ and }~\nu_n:=\nu_{|\mcB(X_n)}
\]
be the restriction of the Borel-measures $\mu$ and $\nu$ to $X_n$. If for all $n\in\bbN$, $\mu_n=\nu_n$, then $\mu=\nu$. To see this, take $B\in\mcB(X)$ and observe that
\[
\mu(B)=\sum_{k=0}^\infty\mu(B\cap (X_{k+1}\setminus X_k))=\sum_{k=0}^\infty\mu_n(B\cap (X_{k+1}\setminus X_k))
\]
and 
\[
\nu(B)=\sum_{k=0}^\infty\nu(B\cap (X_{k+1}\setminus X_k))=\sum_{k=0}^\infty\nu_n(B\cap (X_{k+1}\setminus X_k)).
\]

Fix an arbitrary $n\in\bbN$. We consider 
\[
\mcU_n:=\{U\cap X_n\mid U\in\mcU\},
\]
which is a collection of open subsets of $X_n$, and
\[
\Lambda_n:=\{B\in \mcB(X_n)\mid \mu_n(B)=\nu_n(B)\}.
\]
If we show that $\mcU_n$ satisfies (U0), (U1), (U3) and (U4) for $\mu_n$ and $\nu_n$, then $\mcU_n$ is a $\pi$-system contained in $\Lambda_n$ whose generated $\sigma$-algebra is $\mcB(X_n)$. If we show, moreover, that $\Lambda_n$ is a $\lambda$-system, then, by Dynkin's lemma (Theorem~\ref{theo:DynkinLemma}), 
$\Lambda_n=\mcB(X_n)$
and we are done, since then $\mu_n=\nu_n$.

We show now that $\mcU_n$ satisfies (U0), (U1), (U3) and (U4) for $\mu_n$ and $\nu_n$:
\begin{itemize}
\item $\mcU_n$ satisfies (U0) and (U1), by the definition of the subspace topology and the construction of $\mcU_n$.
\item For checking (U2) and (U3) for $\mcU_n$, we only need to write for $U\in \mcU$,
\[
\mu(U\cap X_n)=\mu(U)-\mu(U\setminus X_n)=\nu(U)-\nu(U\setminus X_n)=\mu(U\cap X_n).
\]
This is possible, because $U\setminus X_n\in \mcU$, by (U1); $\mu(U)$, $\nu(U)$, $\mu(U\setminus X_n)$ and $\nu(U\setminus X_n)$ are finite, by (U2); and 
$\mu(U)=\nu(U)$ and $\mu(U\setminus X_n)=\nu(U\setminus X_n)$, by (U3). 
\end{itemize}

We show now that $\Lambda_n$ is a $\lambda$-system:
\begin{itemize}
\item Since $X_n$ is compact and $\mcU_n$ is an open cover, we have that there are $U_1,\ldots,U_\ell\in\mcU_n$ such that
\[X_n=U_1\cup\cdots\cup U_\ell.\]
But then, by the inclusion-exclusion principle, (U2) and (U3), $\mu_n(X_n)=\nu(X_n)$. Hence $X_n\in\Lambda_n$.

Moreover, this argument shows that $\mu_n(X_n)=\nu(X_n)$ is finite, and so $\mu_n(A)$ and $\nu_n(A)$ are finite for all $A\in\mcB(X_n)$.
\item If $A,B\in \Lambda_n$ and $B\subseteq A$, then
\[
\mu_n(A\setminus B)=\mu_n(A)-\mu_n(B)=\nu_n(A)-\nu_n(B)=\nu_n(A\setminus B),
\]
where the middle equality follows from $A,B\in\Lambda_n$, since $\mu_n(A)$, $\mu_n(B)$, $\nu_n(A)$ and $\nu_n(B)$ are finite. Hence $A\setminus B\in \Lambda_n$.
\item Let $\{A_k\}_{k\in\bbN}\subset \Lambda_n$ is an increasing family of subsets. Without loss of generality, assume that $A_0=\varnothing$. Then
\[
\mu_n\left(\bigcup_{k\in \bbN}A_k\right)=\sum_{k\in\bbN}\mu_n(A_{k+1}\setminus A_k)
\]
and
\[
\nu_n\left(\bigcup_{k\in \bbN}A_k\right)=\sum_{k\in\bbN}\nu_n(A_{k+1}\setminus A_k).
\]
By the previous paragraph and the assumption $\{A_k\}\subset\Lambda_n$, the right-hand sides are equal. Therefore the left-hand sides are also equal, and thus $\bigcup_{k\in\bbN}A_k\in \Lambda_n$.
\end{itemize}
The proof is complete.
\end{proof}

\end{document}